\documentclass{article}

\usepackage{amsthm}
\usepackage{amsmath, amssymb}
\usepackage{hyperref}
\usepackage{tikz}
\usetikzlibrary{matrix}
\usepackage[matrix, arrow, curve]{xy}
\usepackage{xcolor}

\newtheorem{Thm}{Theorem}[section]
\newtheorem{Def}[Thm]{Definition}
\newtheorem{Lem}[Thm]{Lemma}
\newtheorem{Prop}[Thm]{Proposition}
\newtheorem{Kor}[Thm]{Corollary}
\newtheorem{Rem}[Thm]{Remark}
\newtheorem{Bsp}[Thm]{Example}

\newtheorem{Assumption}[Thm]{General Assumptions}
\newtheorem{Conjecture}[Thm]{Conjecture}

\setlength{\parindent}{0pt}

\title{Motivic cohomology of cyclic coverings}

\author{Tariq Syed\\
Mathematisches Institut\\
Heinrich-Heine-Universit{\"a}t D{\"u}sseldorf\\
Universit{\"a}tsstra{\ss}e 1\\
40225 D{\"u}sseldorf, Germany\\
tariq.syed@gmx.de}

\date{\today}
\begin{document}

\maketitle

\begin{abstract}
Cyclic coverings produce many examples of topologically contractible smooth affine complex varieties. In this paper, we study the motivic cohomology groups of cyclic coverings over algebraically closed fields of characteristic $0$. In particular, we prove that in many situations Chow groups of cyclic coverings become trivial after tensoring with $\mathbb{Q}$. Furthermore, we can prove that the Chow groups of certain bicyclic coverings are trivial even without tensoring with $\mathbb{Q}$.\\
2020 Mathematics Subject Classification: 13C10, 14C25, 14F42, 19E15.\\
Keywords: motivic cohomology, Chow groups, cyclic coverings.
\end{abstract}

\tableofcontents

\section{Introduction}

The generalized Serre question on algebraic vector bundles asks whether algebraic vector bundles over topologically contractible smooth affine $\mathbb{C}$-varieties are trivial or not (cf. \cite[Question 6]{AO}). While it was known before that the generalized Serre question has a positive answer in dimensions $\leq 2$, it remains open in higher dimensions (cf. \cite[Section 5.5.2]{AO}). General classification results in dimension $3$ imply that a topologically contractible smooth affine complex threefold $X$ has only trivial vector bundles if and only if the Chow groups $CH^2 (X)$ and $CH^3 (X)$ are trivial (cf. \cite{AF1}, \cite{KM}); the main result in \cite{Sy} implies that all vector bundles over a topologically contractible smooth affine complex fourfold $X$ are trivial if $CH^2 (X)$, $CH^3 (X)$, $CH^4 (X)$ and the Nisnevich cohomology group $H_{Nis}^{2}(X, \textbf{I}^{3})$ are trivial. Note that Chow groups are examples of motivic cohomology groups (cf. \cite[Corollary 19.2]{MVW}) and that there is a link between the group $H_{Nis}^{2}(X, \textbf{I}^{3})$ and motivic cohomology groups with $\mathbb{Z}/2\mathbb{Z}$-coefficients given by the long exact sequence in \cite[Theorem 1.3]{Tot}. A conjecture of Asok-{\O}stv{\ae}r suggests that topologically contractible smooth affine complex varieties should be stably $\mathbb{A}^1$-contractible (cf. \cite[Conjecture 5.5.3.11]{AO}). In view of the cohomological criteria mentioned above, this would automatically yield a positive answer to the generalized Serre question in dimensions $3$ and $4$. It is well-known that stably $\mathbb{A}^1$-contractible smooth affine complex varieties have the integral motivic cohomology of the base field; on the other hand, there exists a criterion for the stable $\mathbb{A}^1$-contractibility of a smooth affine complex variety in terms of motivic cohomology groups: Indeed, if $X$ is a smooth affine complex variety and $x \in X$ is a closed point of $X$ such that for every smooth affine $\mathbb{C}$-scheme $Y$ the projection maps $pr_Y : X \times_{\mathbb{C}} Y \rightarrow Y$ induce isomorphisms on motivic cohomology groups, then $(X,x)$ is stably $\mathbb{A}^1$-contractible (cf. \cite[Theorem 3.1]{DPO}). Altogether, it is of interest to study motivic invariants and particularly the motivic cohomology of topologically contractible smooth affine complex varieties.\\
The literature (cf. \cite{Z}, \cite{AO}) provides many examples of topologically contractible smooth affine complex varieties; most of these examples are given by affine modifications (cf. \cite{KZ},\cite[Section 4]{Z}) or by cyclic coverings (cf. \cite[Section 5]{Z}). The motivic cohomology and motivic topology of affine modifications have been studied in \cite{DPO} and in \cite{ADO}, while no general results are known on the motivic cohomology of cyclic coverings. In this paper, we prove the first general results on the motivic cohomology of cyclic coverings; the underlying notion of cyclic coverings in this paper is directly taken from \cite[Section 5]{Z}:

\begin{Def}\label{CyclicCoverings}
Let $k$ be a field of characteristic $0$, let $X$ be an affine $k$-variety and let $f \in \mathcal{O}_{X}(X) \setminus \{0\}$ be a regular function and let $s > 1$ be an integer. Then
\begin{itemize}
\item we denote by $F_0$ the closed subscheme of $X$ defined by $f$;
\item we let $Y_{s}$ be the closed subscheme of $X \times_k \mathbb{A}^1_k$ defined by $f = u^s$, where $u$ is the variable of $\mathbb{A}^1_k$;
\item we will denote by $F$ the closed subscheme of $Y_s$ defined by $u=0$.
\end{itemize}
We call the projection $\varphi_s : Y_s \rightarrow X$ a cyclic covering of $X$ branched to order $s$ along $F_0$.
\end{Def}

\begin{Assumption}\label{Assumptions}
We furthermore assume that
\begin{itemize}
\item $k$ is algebraically closed;
\item $u^s - f$ is prime in the polynomial rings $\mathcal{O}_{X}(X)[u]$ and $k(X)[u]$, where $k(X)$ is the field of fractions of the domain $\mathcal{O}_{X}(X)$;
\item $X$ and $F_0$ (and hence $Y_s$ by the remarks in \cite[Definition 5.1]{Z}) are smooth over $k$.
\end{itemize}
\end{Assumption}

We primarily study the motivic cohomology groups of the variety $Y_{s}$ with coefficients in a commutative ring $R$ such that $s \in R^{\times}$ and the integral motivic cohomology groups tensored with $R$. For this purpose, we first focus on the induced morphism $Y_{s} \setminus F \rightarrow X \setminus F_{0}$; by abuse of notation, we also denote this morphism by $\varphi_s$. This induced morphism is a finite {\'e}tale Galois cover whose Galois group is the group $\mu_{s}(k)$ of $s$th roots of unity. Motivated by this, we then study the maps on motivic cohomology groups induced by finite {\'e}tale Galois covers. For a finite {\'e}tale Galois cover $\varphi: U \rightarrow V$ between smooth $k$-varieties with Galois group $G$ and of degree $s = |G|$, we prove that $\varphi$ induces an isomorphism ${H^{p,q}(U,R)}^{G} \cong H^{p,q}(V,R)$ whenever $s \in R^{\times}$ (cf. Corollary \ref{FixedSubgroup}). Here ${H^{p,q}(U,R)}^{G}$ denotes the subgroup of elements in $H^{p,q}(U,R)$ fixed by the Galois action of $G$. This result applies in particular to the morphism $\varphi_{s}: Y_{s} \setminus F \rightarrow X \setminus F_{0}$. Motivated by this, we further investigate under which circumstances the Galois action on $H^{p,q}(Y_s \setminus F,R)$ is in fact trivial. While we are able to prove a general result on motivic cohomology groups (cf. Theorem \ref{Ind-Thm}) and, in particular, some results on the motivic cohomology groups of $Y_s$ with $\mathbb{Z}/n\mathbb{Z}$-coefficients for $n \geq 1$ (cf. Corollary \ref{FiniteCoefficients}), the main results proven in this paper concern the Chow groups of a cyclic covering (cf. Theorem \ref{Chow}):

\begin{Thm}
With the notation of Definition \ref{CyclicCoverings} and under the General Assumptions \ref{Assumptions}, further assume that
\begin{itemize}
\item[(a)] $R$ is a coefficient ring with $s \in R^{\times}$;
\item[(b)] there is a $\mathbb{G}_{m,k}$-action $\gamma$ on $X$ which makes $f$ a quasi-invariant of weight $d \in \mathbb{Z}$ with respect to $\gamma$ and $\langle d,s \rangle = \mathbb{Z}$.
\end{itemize}
Then $\varphi_{s}: Y_{s} \rightarrow X$ induces isomorphisms $H^{2i,i} (X,R) \cong H^{2i,i} (Y_{s},R)$ for $i \in \mathbb{Z}$.
\end{Thm}

In particular, as $H^{2i,i}(Y,\mathbb{Q}) \cong CH^{i}(Y) \otimes_{\mathbb{Z}}\mathbb{Q}$ (cf. \cite[Preface]{MVW}) for $Y \in Sm_k$, the Chow groups of $Y_{s}$ are torsion if those of $X$ are torsion. The second assumption in the theorem also appears similarly in classical results on the singular homology of cyclic coverings (cf. \cite[Theorem 5.2]{Z}): As $X$ is affine, a $\mathbb{G}_{m,k}$-action on $X$ is uniquely determined by its associated homomorphism of $k$-algebras $\mathcal{O}_{X}(X) \rightarrow \mathcal{O}_{X}(X)[\lambda, \lambda^{-1}]$, where $\mathcal{O}_{X}(X)[\lambda, \lambda^{-1}]$ denotes the ring of Laurent polynomials over $\mathcal{O}_{X}(X)$; then a regular function $f \in \mathcal{O}_{X}(X)$ is a quasi-invariant of weight $d \in \mathbb{Z}$ with respect to the $\mathbb{G}_{m,k}$-action if and only if this homomorphism of $k$-algebras sends $f$ to $f \lambda^{d}$. For example, when $X = \mathbb{A}^n_{k} = Spec (k[x_{1},...,x_{n}])$ and the $\mathbb{G}_{m,k}$-action is given by $(\lambda, x_{1},...,x_{n}) \mapsto (x_{1} \lambda^{d_{1}},...,x_{n} \lambda^{d_{n}})$ for some $d_{j} \in \mathbb{Z}$, then any coordinate function $x_{j}$ is a quasi-invariant of weight $d_{j}$. While one needs a coefficient ring $R$ with $s \in R^{\times}$ in the theorem above, one can even prove results on the integral motivic cohomology groups $H^{2i,i}(Y_{s},\mathbb{Z})$ (i.e., on the Chow groups) of bicyclic coverings as considered in \cite[Example 6.2]{Z} (cf. Theorem \ref{Bicyclic}):

\begin{Thm}\label{Bicyclic-Intro}
With the notation of Definition \ref{CyclicCoverings} and under the General Assumptions \ref{Assumptions}, further assume that
\begin{itemize}
\item[(a)] $f \in O_X (X)$ is prime and $O_X (X)$ is a UFD;
\item[(b)] the smooth variety $X$ has the Chow groups of $Spec(k)$;
\item[(c)] the smooth variety $F_{0}=\{f=0\} \subset X$ has the Chow groups of $Spec(k)$;
\item[(d)] there is a $\mathbb{G}_{m,k}$-action $\gamma$ on $X$, which makes $f$ a quasi-invariant of weight $d \in \mathbb{Z}$ with respect to $\gamma$;
\item[(e)] $\langle d,s \rangle = \langle d,t \rangle = \langle s,t \rangle= \mathbb{Z}$.
\end{itemize}
Then $CH^i (Y_{s,t})=0$ for $i \geq 1$, where $Y_{s,t}:=\{f + u^{s} + v^{t}=0\} \subset X \times_k \mathbb{A}^1_k \times_k \mathbb{A}^1_k$.
\end{Thm}

If $k= \mathbb{C}$ and $X$ is topologically contractible in the theorem, then the variety $Y_{s,t}$ is actually known to be topologically contractible by \cite[Example 6.2]{Z}. Note that the theorem implies that the varieties $Y_{s,t}$ have only stably trivial algebraic vector bundles. If $\dim (Y_{s,t})=3$, then general classification results immediately imply that all algebraic vector bundles over $Y_{s,t}$ are trivial (cf. \cite{AF1}, \cite{KM}); if $\dim (Y_{s,t})=4$, then all algebraic vector bundles over $Y_{s,t}$ are trivial as soon as the Nisnevich cohomology group $H_{Nis}^{2}(Y_{s,t},\textbf{I}^3)$ is trivial (cf. \cite{Sy}). We are able to prove that $H_{Nis}^{2}(Y_{s,t},\textbf{I}^3)$ vanishes under the assumptions of Theorem \ref{Bicyclic-Intro} if $\dim (Y_{s,t})=4$ and hence all vector bundles over $Y_{s,t}$ are indeed trivial (cf. Theorem \ref{Bicyclic-VB}). Altogether, our results seem to indicate a positive answer to the generalized Serre question in dimensions 3 and 4.\\
Finally, we emphasize that our results make the motivic cohomology groups and particularly the Chow groups of \textbf{many} concrete examples of topologically contractible smooth affine complex varieties very computable. As an illustration of this, we present several exemplary computations of interest in this paper (cf. Section 4); the examples include Koras-Russell threefolds of the third kind which are obtained as cyclic coverings of Koras-Russell threefolds of the first and second kinds. Koras-Russell threefolds (cf. \cite{KR}) of the first and second kinds were studied by many mathematicians (e.g., in \cite{HKO} and \cite{DF}) and have significantly shaped the research on topologically contractible smooth affine $\mathbb{C}$-varieties in the last decades.\\
The organization of the paper is as follows: Sections \ref{2.1}, \ref{2.2} and \ref{2.3} serve as a brief introduction to motivic cohomology and motivic homotopy theory as needed for this paper; Section \ref{2.4} gives a brief overview of classification results on algebraic vector bundles over topologically contractible smooth affine complex varieties. Altogether, Section \ref{2} serves as a motivation to study the motivic cohomology of cyclic coverings. In Section \ref{3} we introduce cyclic coverings, study their motivic cohomology and prove the main results of this paper. Finally, in Section \ref{Examples} we examine several examples of smooth affine varieties which are obtained as cyclic coverings of well-known smooth affine varieties.

\subsection*{Acknowledgements}
The author would like to thank the anonymous referee for suggesting changes which greatly improved the exposition of the paper. The author would like to thank Alexey Ananyevskiy, Aravind Asok, Fr{\'e}d{\'e}ric D{\'e}glise, Adrien Dubouloz, Jean Fasel, Lucy Moser-Jauslin, Joaqu{\'i}n Moraga, Morgan Opie, Sabrina Pauli, Charlie Petitjean, Brian Shin, Charles Vial, Nanjun Yang and Paul Arne {\O}stv{\ae}r for helpful comments on this work. The author was funded by the Deutsche Forschungsgemeinschaft (DFG, German Research Foundation) - Project number 461453992.

\section{Background}\label{Background}\label{2}

\subsection{Motivic cohomology}\label{Motivic cohomology}\label{2.1}

In this section, we briefly discuss some basic facts about motivic cohomology as needed for this paper. Our main reference is \cite{MVW}.\\
We fix a perfect base field $k$ and we let $Sm_k$ be the category of smooth separated schemes of finite type over $k$. Recall that there is an additive category $Cor_k$ whose objects are smooth separated schemes of finite type over $k$ and whose morphisms are called finite correspondences (cf. \cite[Lecture 1]{MVW}). An elementary correspondence between $X,Y \in Sm_k$ with $X$ connected is an irreducible closed subset of $X \times_k Y$ whose associated integral closed subscheme is finite and surjective over $X$; if $X$ is not connected, then an elementary correspondence $X$ to $Y$ is an elementary correspondence from a connected component of $X$ to $Y$. Then one defines the group of finite correspondences $Cor_k (X,Y)$ as the free abelian group generated by elementary correspondences from $X$ to $Y$, so a finite correspondence from $X$ to $Y$ is a formal sum of elementary correspondences from $X$ to $Y$. The composition law in $Cor_k$ is defined by using the intersection product of cycles (cf. \cite[Remarks before Definition 1.5]{MVW}: Given $X,Y,Z \in Sm_k$ and elementary correspondences $V \in Cor_k (X,Y)$ and $W \in Cor_k (Y,Z)$, one defines $W \circ V$ as the pushforward of the intersection product $(V \times_k Z) \cdot (X \times_k W)$ along the projection $p : X \times_k Y \times_k Z \rightarrow X \times_k Z$; the intersection product is defined in \cite[Appendix 17.A]{MVW}, the pushforward just before \cite[Lemma 1.4]{MVW}. There is a faithful embedding

\begin{center}
$Sm_k \rightarrow Cor_k$.
\end{center}

This functor sends $X \in Sm_k$ to its corresponding object in $Cor_k$ and a morphism $f: X \rightarrow Y$ between smooth $k$-schemes $X,Y \in Sm_k$ is sent to its graph $\Gamma_f$ (which is a finite correspondence from $X$ to $Y$ by \cite[Example 1.2]{MVW}). The coproduct in $Cor_k$ is just the disjoint union of schemes. Then one defines presheaves with transfers as follows:

\begin{Def}
A presheaf with transfers is an additive functor $Cor_k^{op} \rightarrow Ab$. Analogously, for any commutative ring $R$, a presheaf with transfers of $R$-modules is an additive functor $Cor_k^{op} \rightarrow \textit{R-mod}$.
\end{Def}

\begin{Bsp}
For any $X \in Sm_k$, the representable presheaf with transfers given by $U \mapsto Cor_k (U,X)$ is denoted by $\mathbb{Z}_{tr}(X)$. Similarly, the representable presheaf with transfers of $R$-modules given by $U \mapsto Cor_k (U,X) \otimes_{\mathbb{Z}} R$ is denoted by $R_{tr}(X)$.
\end{Bsp}

Note that any presheaf with transfers (of $R$-modules) can be restricted to a presheaf $Sm_k^{op} \rightarrow Ab$ (to a presheaf $Sm_k^{op} \rightarrow \textit{R-mod}$). In particular, one has a natural definition for sheaves with transfers:

\begin{Def}
A presheaf with transfers is a Nisnevich (resp. {\'e}tale) sheaf with transfers if its restriction to $Sm_k$ is a Nisnevich (resp. {\'e}tale) sheaf.
\end{Def}

Representable presheaves with transfers are known to be {\'e}tale sheaves with transfers:

\begin{Lem}[{{\cite[Lemma 6.2]{MVW}}}]
For any $X \in Sm_k$, the presheaf with transfers $\mathbb{Z}_{tr}(X)$ is an {\'e}tale sheaf with transfers.
\end{Lem}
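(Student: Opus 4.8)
The plan is to verify the sheaf axiom directly for the {\'e}tale topology on $Sm_k$. Since $\mathbb{Z}_{tr}(X)$ is additive --- by its very construction it sends a disjoint union $\coprod_i U_i$ to $\bigoplus_i Cor_k(U_i, X)$, because an elementary correspondence out of $\coprod_i U_i$ is by definition one out of a connected component --- it suffices to check the equalizer condition for a single {\'e}tale surjective morphism $p \colon U' \to U$ with $U, U' \in Sm_k$. Writing $U'' := U' \times_U U'$ with the two projections $p_1, p_2 \colon U'' \to U'$, I would show that
\[
0 \to \mathbb{Z}_{tr}(X)(U) \xrightarrow{p^*} \mathbb{Z}_{tr}(X)(U') \xrightarrow{p_1^* - p_2^*} \mathbb{Z}_{tr}(X)(U'')
\]
is exact. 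This reduction is legitimate because an arbitrary {\'e}tale cover $\{U_i \to U\}$ is refined by the single map $\coprod_i U_i \to U$, and by additivity the equalizer for this single map is equivalent to the equalizer for the whole family.

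The next step is to identify the restriction maps concretely. By definition the restriction along $p$ is precomposition with the graph $\Gamma_p \in Cor_k(U', U)$; unwinding the composition law of $Cor_k$ (the pushforward of an intersection product) in the special case of the graph of a morphism, one checks that $Z \mapsto Z \circ \Gamma_p$ is exactly the flat pullback of cycles along the {\'e}tale morphism $p \times \mathrm{id}_X \colon U' \times_k X \to U \times_k X$, with all pullback multiplicities equal to $1$ since $p$ is {\'e}tale. With this description the injectivity of $p^*$ is immediate: $p \times \mathrm{id}_X$ is faithfully flat and surjective, so the flat pullback of a nonzero cycle whose components are finite and surjective over $U$ is again nonzero, and distinct integral generators of $Cor_k(U,X)$ have preimages with distinct supports. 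Hence the left-hand map is injective.

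The heart of the proof is exactness in the middle, which amounts to effective descent for cycles along the faithfully flat cover $p \times \mathrm{id}_X$. Given $Z' \in Cor_k(U', X)$ with $p_1^* Z' = p_2^* Z'$, I would interpret this equality as a descent datum for the cycle $Z'$ on $U' \times_k X$ relative to the {\'e}tale cover of $U \times_k X$; faithfully flat descent for cycles (equivalently, for the integral components of $Z'$ together with their multiplicities) then produces a unique cycle $Z$ on $U \times_k X$ with $p^* Z = Z'$. It remains to check that $Z$ lies in $Cor_k(U, X)$, i.e. that each component is finite and surjective over $U$; but since the support of $Z'$ is the base change of that of $Z$ along $p$, and finiteness and surjectivity of a morphism descend along the faithfully flat $p$, these properties transfer from $Z'$ over $U'$ to $Z$ over $U$. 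The main obstacle is precisely this descent step: one must carefully track how the integral components of $Z'$ and their coefficients are matched up by the two pullbacks $p_1^*$ and $p_2^*$ --- so that the cocycle condition genuinely glues the supports into a closed subscheme of $U \times_k X$ --- and confirm that the multiplicities transport correctly. Once the gluing is established, the descended relative cycle automatically inherits the finite-and-surjective property from $Z'$, completing the verification of the {\'e}tale sheaf axiom.
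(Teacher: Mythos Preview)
The paper does not actually prove this lemma: it is stated as background with a citation to \cite[Lemma 6.2]{MVW} and no argument is given. Your proposal is a correct sketch of essentially the proof that appears in that reference, so in content there is nothing to compare---you are supplying what the paper deliberately outsources.

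Your outline is sound. The reduction to a single {\'e}tale surjection via additivity is standard, and your identification of the restriction map with flat cycle pullback along $p \times \mathrm{id}_X$ is correct (this is where {\'e}taleness, as opposed to a general covering, is used so that multiplicities behave trivially). Injectivity then follows. For exactness in the middle you correctly identify the crux as a cycle-descent statement and you flag that matching components and coefficients under $p_1^*, p_2^*$ is where the work lies. One small caveat: ``faithfully flat descent for cycles'' is not a black box one can simply invoke---in the literature (e.g.\ \cite[Lemma 6.2]{MVW} or the treatment via relative cycles in Suslin--Voevodsky) this is established by a concrete argument, reducing to the case where $U$ and $U'$ are local (or henselian local, or strictly henselian) so that the components of $Z'$ can be enumerated and matched under the two projections by hand. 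If you intend to write out the proof rather than cite it, that local analysis is the one place where you would need to add genuine detail beyond what your sketch contains.
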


Recall that one defines complexes of presheaves with transfers $\mathbb{Z}(q)$, $q \in \mathbb{Z}$, called motivic complexes (cf. \cite[Definition 3.1]{MVW}). These complexes are used in order to define motivic cohomology groups (cf. \cite[Definition 3.4]{MVW}):

\begin{Def}
The motivic cohomology of $X \in Sm_k$ is defined as the Nisnevich hypercohomology $H^{p,q}(X,\mathbb{Z}):=\mathbb{H}^p_{Nis} (X, \mathbb{Z}(q))$. If $R$ is a commutative ring, one sets $R(q) = \mathbb{Z}(q) \otimes_{\mathbb{Z}} R$. The motivic cohomology of $X \in Sm_k$ with coefficients in $R$ is defined as the Nisnevich hypercohomology $H^{p,q}(X,R):=\mathbb{H}^p_{Nis} (X, R(q))$.
\end{Def}

Motivic cohomology groups are contravariantly functorial with respect to morphisms in $Cor_k$ and hence define presheaves with transfers (cf. \cite[Example 13.11]{MVW}). The triangulated category $\textbf{DM}^{eff,-}_{Nis}(k,R)$ of effective motives over $k$ with $R$-coefficients is defined as the localization of the derived category $D^{-}Sh_{Nis}(Cor_{k},R)$ of Nisnevich sheaves with transfers of $R$-modules with respect to $\mathbb{A}^1$-weak equivalences (cf. \cite[Definition 14.1]{MVW}). Motivic cohomology with coefficients in $R$ is representable in $\textbf{DM}^{eff,-}_{Nis}(k,R)$, i.e., there are natural isomorphisms

\begin{center}
$H^{p,q}(X, R) \cong Hom_{\textbf{DM}^{eff,-}_{Nis}(k,R)}(M(X),R(q)[p])$
\end{center}

for any $X \in Sm_k$ (cf. \cite[Proposition 14.16]{MVW}). Here $M(X)$ is the motive of $X$, i.e., the class of $R_{tr}(X)$ in $\textbf{DM}^{eff,-}_{Nix}(k,R)$.\\
If $X \in Sm_k$ has a smooth closed subscheme $Z$ of codimension $c$ with complement $U = X \setminus Z$, then there is a distinguished triangle called Gysin triangle (cf. \cite[Properties 14.5]{MVW}) of the form

\begin{center}
$M(U) \rightarrow M(X) \rightarrow M(Z)(c)[2c] \rightarrow M(U)[1]$.
\end{center}

This distinguished triangle induces a long exact localization sequence of motivic cohomology groups of the form

\begin{center}
$...\rightarrow H^{p-2c,q-c}(F,R) \rightarrow H^{p,q}(X,R) \rightarrow H^{p,q}(U, R) \rightarrow H^{p+1-2c,q-c}(F,R) \rightarrow ...$
\end{center}
The Gysin triangle and its associated long exact localization sequence are functorial with respect to any commutative diagram

\begin{center}
$\begin{xy}
  \xymatrix{
     T \ar[d]_g \ar[r] & Y \ar[d]^f \\
     Z \ar[r] & X}
\end{xy}$
\end{center}

such that $Z$ is a smooth closed subscheme of $X \in Sm_k$, $T$ is a smooth closed subscheme of $Y \in Sm_k$ and the underlying diagram of topological spaces is cartesian (e.g., see \cite{D}). Finally, recall that there are natural isomorphisms

\begin{center}
$H^{2p,p}(X, \mathbb{Z}) \cong CH^p (X)$
\end{center}

for any $X \in Sm_k$ and integer $p \geq 0$ (cf. \cite[Corollary 19.2]{MVW}). With this identification, the Gysin triangle above recovers the usual localization sequences of Chow groups.

\subsection{$\mathbb{A}^1$-contractible varieties}\label{2.2}

We outline the construction of the unstable $\mathbb{A}^1$-homotopy category over a field given in \cite{MV}. The underlying idea of $\mathbb{A}^{1}$-homotopy theory is to develop a homotopy theory of schemes in which the affine line $\mathbb{A}^1$ plays the role of the unit interval $[0,1]$ in topology.\\
So let $k$ be a field. Following \cite{MV}, we let $Sm_k$ be the category of smooth separated schemes of finite type over $k$ and we consider the category $Spc_{k}=\Delta^{op}Shv_{Nis}(Sm_{k})$ of simplicial Nisnevich sheaves over $Sm_k$; simplicial Nisnevich sheaves over $Sm_k$ are also referred to as spaces. Note that both the category $Sm_k$ and the category $\Delta^{op} Sets$ of simplicial sets can be embedded into $Spc_{k}$. There is a model structure on this category in which cofibrations are simply given by monomorphisms and weak equivalences are given by morphisms which induce weak equivalences of simplicials sets on stalks; this model structure is called the simplicial model structure and weak equivalences with respect to this model structure are also called simplicial weak equivalences. The $\mathbb{A}^1$-model structure is obtained as a left Bousfield localization of the simplicial model structure with respect to the projection morphisms $\mathcal{X} \times_k \mathbb{A}^{1} \rightarrow \mathcal{X}$; its weak equivalences are also called $\mathbb{A}^{1}$-weak equivalences. Its associated homotopy category (obtained from $Spc_{k}$ by inverting $\mathbb{A}^{1}$-weak equivalences) is usually denoted $\mathcal{H} (k)$ and called the unstable $\mathbb{A}^{1}$-homotopy category over $k$. This category also has a pointed version which is constructed completely analogously by considering the category $Spc_{k, \bullet}$ of pointed simplicial Nisnevich sheaves over $Sm_k$ (which are referred to as pointed spaces); it is denoted $\mathcal{H}_{\bullet} (k)$ and called the pointed unstable $\mathbb{A}^1$-homotopy category.\\
For spaces $\mathcal{X},\mathcal{Y}$, we denote by $[\mathcal{X},\mathcal{Y}]_{\mathcal{H}(k)}$ the set of morphisms from $\mathcal{X}$ to $\mathcal{Y}$ in $\mathcal{H}(k)$; similarly, for pointed spaces $(\mathcal{X},x),(\mathcal{Y},y)$, we denote by $[(\mathcal{X},x),(\mathcal{Y},y)]_{\mathcal{H}_{\bullet}(k)}$ the set of morphisms from $(\mathcal{X},x)$ to $(\mathcal{Y},y)$ in $\mathcal{H}_{\bullet}(k)$.\\
By analogy with topology, one can define a smash product $(\mathcal{X},x) \wedge (\mathcal{Y},y)$ for pointed spaces $(\mathcal{X},x),(\mathcal{Y},y)$. The functor $\Sigma_{s}=(S^1,\ast) \wedge -: Spc_{k, \bullet} \rightarrow Spc_{k, \bullet}$ is called the simplicial suspension functor; it has a right adjoint functor $\Omega_{s}: Spc_{k, \bullet} \rightarrow Spc_{k, \bullet}$ called the loop space functor. The pair of functors forms a Quillen adjunction.

\begin{Bsp}[{{\cite[Lemma 5.3.1.3]{AO}}}]\label{VB-weak-equivalence}
Any Nisnevich locally trivial $\mathbb{A}^n_k$-bundle $\pi: E \rightarrow X$ is an $\mathbb{A}^{1}$-weak equivalence. In particular, this holds for any geometric vector bundle $\pi: E \rightarrow X$.
\end{Bsp}

With a homotopy theory for schemes at hand, it is natural to define the following notion of contractibility.

\begin{Def}
A space $\mathcal{X} \in Spc_k$ is called $\mathbb{A}^{1}$-contractible if the unique morphism $\mathcal{X} \rightarrow Spec(k)$ is an $\mathbb{A}^{1}$-weak equivalence.
\end{Def}

\begin{Bsp}
The affine spaces $\mathbb{A}^n_k$ are $\mathbb{A}^{1}$-contractible. This follows from Example \ref{VB-weak-equivalence}.
\end{Bsp}

If $k$ is a field of characteristic $0$, then in low dimensions affine spaces are the unique $\mathbb{A}^{1}$-contractible smooth schemes:

\begin{Thm}[{{\cite[Claim 5.7]{AD}}}]
If $char(k) =0$, the affine line $\mathbb{A}^1_k$ is the unique $\mathbb{A}^{1}$-contractible smooth scheme of dimension $1$.
\end{Thm}

\begin{Thm}[{{\cite[Theorem 1.1]{CR}}}]
If $char (k) = 0$, the affine plane $\mathbb{A}^2_{k}$ is the unique $\mathbb{A}^{1}$-contractible smooth scheme of dimension $2$.
\end{Thm}

\subsection{Stably $\mathbb{A}^1$-contractible and topologically contractible varieties}\label{2.3}

In classical homotopy theory computations become more accessible after inverting the suspension functor for pointed spaces and $\mathbb{A}^{1}$-homotopy theory is no different in this respect. By analogy, there is a stable motivic homotopy category $\mathcal{SH}(k)$ of $\mathbb{P}^1$-spectra, where $k$ is a field as usual; instead of explaining the construction of $\mathcal{SH}(k)$ and presenting its basic properties, we just give a quick definition of stable $\mathbb{A}^{1}$-contractibility which is completely sufficient for this paper.

\begin{Def}
Let $X \in Sm_{k}$ and $x$ be a closed point. Then the pointed space $(X,x)$ is stably $\mathbb{A}^{1}$-contractible if ${\mathbb{P}_{k}^{1}}^{\wedge n} \wedge (X,x)$ is an $\mathbb{A}^{1}$-contractible space for some $n \geq 0$.
\end{Def}

Naturally, $\mathbb{A}^1$-contractible spaces are automatically stably $\mathbb{A}^1$-contractible by definition (for any choice of a basepoint). Now let us work over a base field $k$ which admits an embedding $\iota: k \hookrightarrow \mathbb{C}$ into the field of complex numbers. By means of such an embedding, one may associate a complex manifold $X_{\iota}^{an}$ to any smooth variety $X$ over $k$.

\begin{Def}
Let $k$ be a field which admits an embedding into $\mathbb{C}$. A smooth affine $k$-variety $X \in Sm_k$ is called topologically contractible if the manifold $X_{\iota}^{an}$ is a contractible topological space for any embedding $\iota: k \hookrightarrow \mathbb{C}$.
\end{Def}

One should think of topological contractibility as a significantly weaker notion than $\mathbb{A}^{1}$-contractibility. If the base field $k$ admits an embedding into $\mathbb{C}$, say $\iota: k \hookrightarrow \mathbb{C}$, then the assignment, $X \mapsto X_{\iota}^{an}$ can be extended to a complex or topological realization functor
\begin{center}
$\mathfrak{R}_{\iota}:\mathcal{H}(k) \rightarrow \mathcal{H}$,
\end{center}
where $\mathcal{H}$ denotes the homotopy category of topological spaces (cf. \cite[Section 3.3]{MV}). In particular, $\mathbb{A}^{1}$-contractible smooth schemes are also topologically contractible. Conversely, the work of Choudhury-Roy (cf. \cite{CR}) shows that not every topologically contractible smooth complex variety is $\mathbb{A}^{1}$-contractible:

\begin{Bsp}[{{\cite[Example 5.5.2.5]{AO}}}]
Let $k = \mathbb{C}$ and $k > l \geq 2$. The tom Dieck-Petrie surface $\{\dfrac{{(xz+1)}^k - {(yz+1)}^l -z}{z} = 0 \} \subset \mathbb{A}^{3}_{\mathbb{C}}$ is a smooth topologically contractible and stably $\mathbb{A}^1$-contractible surface, but has logarithmic Kodaira dimension $1$ and hence cannot be isomorphic to $\mathbb{A}^2_{\mathbb{C}}$. In particular, it is not $\mathbb{A}^1$-contractible by \cite[Theorem 1.1]{CR}.
\end{Bsp}

The relationship between stable $\mathbb{A}^1$-contractibility and topological contractibility is more subtle. First of all, it follows from representability results that stably $\mathbb{A}^1$-contractible smooth affine varieties have the integral motivic cohomology of the base field. One has the criterion for stable $\mathbb{A}^1$-contractibility via Bloch's higher Chow groups below (at least over algebraically closed fields of characteristic $0$).\\
Higher Chow groups were defined by Bloch in \cite{B1} and are denoted $CH_{j}(X,i)$, where $X$ is an equidimensional quasi-projective scheme over a field $k$. One then defines $CH_{\ast}(X,i) = \bigoplus_{j \geq 0} CH_{j}(X,i)$ and $CH_{\ast}(X) = \bigoplus_{i \geq 0} CH_{\ast}(X,i)$. Bloch's higher Chow groups are contravariantly functorial for flat morphisms and covariantly functorial for proper maps (cf. \cite[Proposition 1.3]{B1}), they satisfy homotopy invariance (cf. \cite[Theorem 2.1]{B1}) and admit long exact localization sequences (cf. \cite{B2}). If $X$ has dimension $d$, one sets $CH^j (X,i) = CH_{d-j} (X,i)$. Voevodsky proved in \cite{V1} that there are natural isomorphisms
\begin{center}
$H^{p,q}(X, \mathbb{Z}) \cong CH^q (X,2p-q)$
\end{center}
for any smooth equidimensional quasi-projective scheme $X$ of dimension $d$ over any field $k$. As indicated above, higher Chow groups and therefore essentially motivic cohomology groups give a criterion for the stable $\mathbb{A}^1$-contractibility of smooth affine varieties over algebraically closed fields of characteristic $0$:

\begin{Thm}[{{\cite[Theorem 3.1]{DPO}}}]
Let $X$ be a smooth affine variety over an algebraically closed field $k$ of characteristic $0$ and $x \in X$ be a closed point. If for every affine $Y \in Sm_k$ the projection maps $pr_Y : X \times_k Y \rightarrow Y$ induce isomorphisms on higher Chow groups, then $(X,x)$ is stably $\mathbb{A}^1$-contractible.
\end{Thm}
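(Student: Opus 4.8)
This is essentially the criterion of \cite[Theorem 3.1]{DPO}; I sketch the argument I would give for it. The plan is to first show that the reduced motive of $(X,x)$ vanishes in $\textbf{DM}^{eff,-}_{Nis}(k,\mathbb{Z})$, and only afterwards to upgrade this to stable $\mathbb{A}^1$-contractibility, using crucially that $k$ is algebraically closed of characteristic $0$. By Voevodsky's comparison isomorphism $H^{p,q}(Z,\mathbb{Z}) \cong CH^q(Z,2p-q)$ recalled above, the hypothesis is equivalent to the assertion that the flat pullback $pr_Y^{*}$ induces an isomorphism $H^{p,q}(Y,\mathbb{Z}) \cong H^{p,q}(X \times_k Y,\mathbb{Z})$ for every affine $Y \in Sm_k$ and all $p,q$. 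The closed point $x$ splits the augmentation $M(X) \to \mathbb{Z}$, so $M(X) \cong \mathbb{Z} \oplus N$ with $N := \tilde{M}(X,x)$, and since $M(X \times_k Y) \cong M(X) \otimes M(Y)$ the reformulated hypothesis says precisely that $\mathrm{Hom}(N \otimes M(Y), \mathbb{Z}(q)[p]) = 0$ for all affine $Y \in Sm_k$, all $p$ and all $q \geq 0$.

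I would then exploit that $\mathrm{char}(k) = 0$. By resolution of singularities the geometric motives (obtained from the effective geometric motives $\textbf{DM}^{eff}_{gm}(k,\mathbb{Z})$ by inverting the Tate twist) form a rigid tensor category, and the motives $M(Y)$ of affine smooth $k$-varieties thick-generate $\textbf{DM}^{eff}_{gm}(k,\mathbb{Z})$ (reduce the motive of an arbitrary smooth variety to its affine opens by Nisnevich Mayer--Vietoris). Hence the displayed vanishing extends to $\mathrm{Hom}(N \otimes D, \mathbb{Z}(q)[p]) = 0$ for every $D \in \textbf{DM}^{eff}_{gm}(k,\mathbb{Z})$, every $p$ and every $q \geq 0$. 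By Poincar\'e duality for the smooth variety $X$, the dual $M(X)^\vee$ becomes effective after twisting by $\mathbb{Z}(d)[2d]$ with $d = \dim X$; in particular $N^{\vee}(d)$ again lies in $\textbf{DM}^{eff}_{gm}(k,\mathbb{Z})$ and hence is an admissible test object $D$. Taking $D = N^{\vee}(d)$, $q = d$ and $p = 0$, and using rigidity ($N^{\vee\vee} \cong N$ and $\mathbb{Z}(d)(-d) \cong \mathbb{Z}$), the vanishing becomes $\mathrm{Hom}(N,N) = 0$; therefore $\mathrm{id}_N = 0$ and $N = 0$. Equivalently $M(X) \cong \mathbb{Z}$, so $(X,x)$ has trivial reduced motivic homology in every bidegree.

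The remaining step --- promoting the triviality of the motive to the vanishing of the reduced $\mathbb{P}^1$-suspension spectrum $E$ of $(X,x)$ in $\mathcal{SH}(k)$ --- is the real content, and this is exactly where ``algebraically closed of characteristic $0$'' is indispensable. One observes that $E$ is an effective, connective spectrum (Morel's connectivity theorem), so it may be analysed through Voevodsky's slice tower: over a field of characteristic $0$ the slices of the motivic sphere spectrum are modules over the motivic Eilenberg--MacLane spectrum and the slice tower of a connective spectrum converges (Levine), while over an algebraically closed field the periodic phenomena (such as $\rho$- and $\eta$-periodicity) that the motivic Eilenberg--MacLane spectrum cannot detect do not obstruct; consequently the triviality of the motive of $(X,x)$ forces every slice of $E$ to vanish (the slices of a connective spectrum being modules over, and computable from, motivic cohomology), whence $E \simeq 0$. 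It then remains to unsuspend: since $X$ is smooth affine, ${\mathbb{P}_{k}^{1}}^{\wedge N} \wedge (X,x)$ is highly $\mathbb{A}^1$-connected with connectivity tending to $\infty$ as $N \to \infty$, so Morel's $\mathbb{A}^1$-Hurewicz theorem --- combined with the vanishing of all of its $\mathbb{A}^1$-homology sheaves, inherited from $E \simeq 0$ --- forces all of its $\mathbb{A}^1$-homotopy sheaves to vanish, and convergence of the $\mathbb{A}^1$-Postnikov tower gives ${\mathbb{P}_{k}^{1}}^{\wedge N} \wedge (X,x) \simeq \ast$ for $N \gg 0$, so that $(X,x)$ is stably $\mathbb{A}^1$-contractible. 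I expect the main obstacle to be precisely this last passage from $\textbf{DM}$ to $\mathcal{SH}(k)$: isolating why trivial motivic homology already trivialises the full stable $\mathbb{A}^1$-homotopy type over such a base field --- a genuinely arithmetic input rather than a formal one --- and handling the convergence of both the slice and the $\mathbb{A}^1$-Postnikov towers.
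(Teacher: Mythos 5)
The paper's own ``proof'' is a one-line citation of \cite[Theorem 3.1]{DPO}, so you are attempting strictly more than the paper does, namely to reprove the cited criterion. Your first half is essentially correct and is the standard argument: the hypothesis, via Voevodsky's comparison with higher Chow groups, says $\mathrm{Hom}(N\otimes M(Y),\mathbb{Z}(q)[p])=0$ for affine $Y$; affine motives thick-generate the effective geometric category; and Friedlander--Voevodsky duality (note $M(X)^{\vee}(d)[2d]\cong M^{c}(X)$, which is effective even though $X$ is not proper) together with cancellation lets you test against $D=N^{\vee}(d)$ and conclude $\mathrm{id}_{N}=0$, i.e.\ $\tilde{M}(X,x)=0$. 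This part only uses $\mathrm{char}(k)=0$ and is fine.

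The genuine gap is exactly where you locate the ``real content'': the passage from $\tilde{M}(X,x)=0$ to the vanishing of the reduced $\mathbb{P}^1$-suspension spectrum $E$. Your assertion that ``triviality of the motive forces every slice of $E$ to vanish'' and that over an algebraically closed field ``periodic phenomena do not obstruct'' is not an argument, and as a general principle it is false: $H\mathbb{Z}$-homology is not conservative on connective spectra, because $\eta$-periodic (Witt-type) phenomena survive even over algebraically closed fields, where $W(k)=\mathbb{Z}/2\neq 0$. What actually makes the step work is that $E$ is a \emph{compact, effective} spectrum (a suspension spectrum of a pointed smooth scheme), for which one can invoke the identification of the slices of the sphere spectrum (R\"ondigs--{\O}stv{\ae}r/Levine) and Levine's convergence theorem for the slice tower of such spectra over fields of characteristic $0$ (of finite cohomological dimension, e.g.\ algebraically closed); none of these inputs is supplied or even correctly isolated in your sketch, and this is precisely the content of the results of Hoyois--Krishna--{\O}stv{\ae}r/Dubouloz--Pauli--{\O}stv{\ae}r that the paper cites. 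Your final unsuspension step also has a gap: $E\simeq 0$ in $\mathcal{SH}(k)$ does not directly give vanishing of the $\mathbb{A}^1$-homology sheaves of the finite suspensions ${\mathbb{P}^1_k}^{\wedge N}\wedge(X,x)$, so the Hurewicz/Postnikov argument as stated does not apply; the standard fix is compactness, i.e.\ $[\Sigma^{\infty}(X,x),\Sigma^{\infty}(X,x)]$ is the colimit over $N$ of unstable mapping sets of finite $\mathbb{P}^1$-suspensions, so the identity becomes null after finitely many suspensions, which yields the unstable contractibility required by the paper's definition of stable $\mathbb{A}^1$-contractibility.
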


A lot of examples of topologically contractible smooth affine complex varieties are given by so-called affine modifications; the corresponding result was proven in \cite[Corollary 3.1]{KZ}. It was proven in \cite{DPO} that affine modification are stably $\mathbb{A}^1$-contractible under suitable hypotheses (cf. \cite[Theorem 3.2]{DPO}); we refer the reader to \cite{KZ}, \cite{Z} and \cite{DPO} for detailed discussions of affine modifications. The results on affine modifications imply that many examples of topologically contractible smooth affine varieties are automatically stably $\mathbb{A}^1$-contractible; in fact, a conjecture of Asok-{\O}stv{\ae}r relates the notions of topological contractibility and stably $\mathbb{A}^1$-contractibility as follows:

\begin{Conjecture}[{{\cite[Conjecture 5.5.3.11]{AO}}}]\label{Conjecture}
If $\mathit{X}$ is a topologically contractible smooth affine complex variety and $\mathit{x}$ a chosen closed basepoint, then $(X,x)$ is stably $\mathbb{A}^1$-contractible.
\end{Conjecture}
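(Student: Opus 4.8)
The plan is to reduce the statement to a question about higher Chow groups by invoking the criterion stated just above (following \cite[Theorem 3.1]{DPO}): in order to show that $(X,x)$ is stably $\mathbb{A}^1$-contractible, it suffices to prove that for \emph{every} affine $Y \in Sm_k$ (with $k=\mathbb{C}$) the projection $pr_Y : X \times_{\mathbb{C}} Y \rightarrow Y$ induces isomorphisms on all of Bloch's higher Chow groups. This reformulates the conjecture as the assertion that $X$ behaves like $Spec(\mathbb{C})$ motivically and in a family-wise manner, and in particular (taking $Y = Spec(\mathbb{C})$) that $X$ has the higher Chow groups of the base field. Thus the entire problem is transported from homotopy theory to the computation of motivic cohomology groups $H^{p,q}(X,\mathbb{Z}) \cong CH^q(X, 2p-q)$, which is exactly the kind of object the present paper is designed to control.

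First I would dispatch the absolute case $Y = Spec(\mathbb{C})$ using the topological input. Topological contractibility gives that $H^*(X^{an},\mathbb{Z})$ agrees with that of a point, and via the topological realization functor $\mathfrak{R}_\iota$ this constrains the image of $H^{p,q}(X,\mathbb{Z})$ under the realization map; on the weight-graded pieces detected by singular cohomology this already forces the desired vanishing, and standard vanishing ranges for the motivic cohomology of a smooth affine variety (beyond cohomological degree $2\dim X$, and the analogous bounds on $CH^q$) rule out contributions in the extreme bidegrees. The serious content, however, lies in the intermediate weights, which singular cohomology cannot see.

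Next I would attempt to build $X$ up from geometrically simpler pieces and propagate the computation through the Gysin/localization sequences of Section~\ref{2.1}. Every presently known topologically contractible smooth affine variety is produced either as an affine modification or as a cyclic covering: for the former, stable $\mathbb{A}^1$-contractibility is already established in \cite[Theorem 3.2]{DPO}, while for the latter the main results of this paper (Theorems~\ref{Chow} and~\ref{Bicyclic}) compute the relevant Chow groups directly, using the $\mathbb{G}_m$-action, the transfer/fixed-point argument underlying Corollary~\ref{FixedSubgroup}, and the localization sequence along $F$ and $F_0$. The strategy would then be to stratify a general $X$ so that its motivic cohomology reduces, via localization, to that of smooth strata for which the computation is already known, and finally to verify that the resulting isomorphisms remain isomorphisms after base change along any affine $Y$.

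The hard part — and the reason this is stated only as a conjecture — is precisely the passage from topological to motivic invariants for a variety prescribed only up to homotopy equivalence of its analytification. Motivic cohomology, equivalently higher Chow groups, is strictly finer than singular cohomology, and $\mathfrak{R}_\iota$ is very far from an isomorphism on cohomology; triviality of $H^*(X^{an},\mathbb{Z})$ places no control on the higher-weight motivic cohomology, where torsion cycles and ``extra'' algebraic cycles invisible to topology may persist. Compounding this, there is no structural theorem guaranteeing that an arbitrary topologically contractible $X$ arises as an affine modification or cyclic covering, so the inductive strategy of the previous paragraph cannot be applied uniformly. Genuinely closing the gap would require either a new comparison result forcing motivic triviality from topological triviality for smooth affine complex varieties, or a classification expressing every such $X$ in terms of the tractable constructions handled here and in \cite{DPO}; absent one of these, the method yields the conjecture only for the explicit families treated in Section~\ref{Examples} rather than in full generality.
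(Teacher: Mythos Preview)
The statement you are asked to prove is explicitly labeled a \emph{Conjecture} in the paper, and the paper does not prove it: immediately after stating it, the paper simply records that ``This is \cite[Conjecture 5.3.11]{AO}'' and that it is known only in dimensions $\leq 2$ (by \cite[Theorem 1]{A}). There is therefore no ``paper's own proof'' to compare your proposal against.

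Your proposal is not a proof either, and to your credit you say so yourself in the last paragraph. What you have written is a heuristic discussion: you correctly identify the DP{\O} criterion as the natural reduction, correctly note that the known constructions (affine modifications, cyclic and bicyclic coverings) are handled by \cite{DPO} and by the results of this paper, and correctly isolate the genuine obstruction, namely that topological contractibility of $X^{an}$ gives no control over the higher-weight motivic cohomology of $X$, and that there is no structure theorem expressing an arbitrary topologically contractible smooth affine variety in terms of these tractable constructions. That is an accurate summary of the state of the art, but it is a survey of obstructions rather than an argument; none of the steps you sketch for ``the absolute case $Y=Spec(\mathbb{C})$'' actually go through (realization does not constrain the relevant bidegrees, as you yourself observe two paragraphs later), and the stratification strategy has no input to run on for a general $X$. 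In short: the paper leaves this open, and so does your proposal.
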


In dimensions $\leq 2$, this conjecture is known to be true (cf. \cite[Theorem 1]{A}); see the discussion before \cite[Conjecture 5.5.3.11]{AO} and also \cite[Remark 5.5.3.12]{AO} for further information on this conjecture.\\
We now present well-known examples of topologically contractible smooth affine complex varieties called Koras-Russell threefolds. These varieties were studied by Koras-Russell in the context of the solution of the linearization problem for $\mathbb{C}^{\times}$-actions (cf. \cite{KR}).

\begin{Bsp}[{{\cite[(3.1), (3.2)]{HKO}}}]\label{KREX}
Let $k$ be any field of characteristic $0$.\\
(a) Consider the subvarieties of $\mathbb{A}^{4}_k = Spec (k[x,y,z,t])$ defined by the equations
\begin{center}
$x + x^{m}y + z^{\alpha_{2}} + t^{\alpha_{3}}$
\end{center}
with $m, \alpha_{2}, \alpha_{3} \geq 2$ and $\langle \alpha_{2},\alpha_{3} \rangle = \mathbb{Z}$. These varieties are called the Koras-Russell threefolds of the first kind. If $k=\mathbb{C}$, the threefolds are topologically contractible. For any algebraically closed field $k$ of characteristic $0$, it was proven by Hoyois-Krishna-{\O}stv{\ae}r that these varieties equipped with $0$ as a basepoint are stably $\mathbb{A}^1$-contractible (cf. \cite[Theorem 4.2]{HKO}); Dubouloz-Fasel even proved that Koras-Russell threefolds of the first kind are $\mathbb{A}^1$-contractible over any base field $k$ of characteristic $0$ (cf. \cite[Theorem 1]{DF}). In case $k = \mathbb{C}$ Koras-Russell threefolds of the first kind are known to have logarithmic Kodaira dimension $-\infty$ as they have non-trivial $\mathbb{C}_+$-actions (cf. \cite[Exercise 8.7]{Z} and \cite[Introduction]{P}), but they can nevertheless be distinguished from $\mathbb{A}^3_{\mathbb{C}}$ by the so-called Makar-Limanov invariant (cf. \cite{ML}).\\
(b) Secondly, consider the subvarieties defined by the equations
\begin{center}
$x + {(x^{b}+ z^{\alpha_{2}})}^{l} y + t^{\alpha_{3}}$
\end{center}
with $b,l, \alpha_{2}, \alpha_{3} \geq 2$ and $\langle \alpha_{2},b\alpha_{3} \rangle = \mathbb{Z}$. These varieties are called the Koras-Russell threefolds of the second kind and are also topologically contractible whenever $k=\mathbb{C}$. Again, Hoyois-Krishna-{\O}stv{\ae}r proved that these varieties are stably $\mathbb{A}^1$-contractible over any algebraically closed field of characteristic $0$ (\cite[Theorem 4.2]{HKO}). It is an open question whether they are also $\mathbb{A}^1$-contractible or not. As the Koras-Russell threefolds of the first kind, they have logarithmic Kodaira dimension $-\infty$ (cf. \cite[Exercise 8.7]{Z} and \cite[Introduction]{P}) over $\mathbb{C}$, but can also be distinguished from $\mathbb{A}^3_{\mathbb{C}}$ by the so-called Makar-Limanov invariant.
\end{Bsp}

Finally, we remark that there is also a third kind of Koras-Russell threefolds (usually defined over $\mathbb{C}$). While the first two kinds of Koras-Russell threefolds can be described very explicitly as subvarieties of $\mathbb{A}^4_{\mathbb{C}}$ defined by concrete equations as above, it seems that Koras-Russell threefolds of the third kind may not be defined by one specific type of equation; examples of this third kind of Koras-Russell threefolds can be constructed as cyclic coverings of the Koras-Russell threefolds of the first and second kind (see Section \ref{Examples}). The threefolds of the third kind are exactly those Koras-Russell threefolds which admit only trivial $\mathbb{C}^+$-actions (Koras-Russell threefolds of the first and second kind admit non-trivial $\mathbb{C}^+$-actions; see \cite{KML} or \cite[Introduction]{P}). Just like affine modifications, cyclic coverings as defined in \cite[Definition 5.1]{Z} give many examples of topologically contractible smooth affine complex varieties (cf. \cite[Theorem 5.1]{Z}). There exist no analogous results on the $\mathbb{A}^1$-contractibility or stable $\mathbb{A}^1$-contractibility of cyclic coverings.

\subsection{Vector bundles over smooth affine contractible varieties}\label{2.4}

We now discuss results on vector bundles over contractible varieties, i.e., $\mathbb{A}^{1}$-contractible, stably $\mathbb{A}^{1}$-contractible or topologically contractible varieties.\\
First of all, we have already seen that affine spaces are the most basic examples of contractible varieties (in each sense). The Quillen-Suslin theorem shows that vector bundles over affine spaces are trivial (cf. \cite{Q},\cite{S1}):

\begin{Thm}
For any $n \geq 0$ and field $k$, algebraic vector bundles over affine spaces $\mathbb{A}^{n}_{k}$ are trivial.
\end{Thm}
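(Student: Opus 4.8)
The statement is equivalent to the assertion that every finitely generated projective module over the polynomial ring $A_n := k[x_1, \dots, x_n]$ is free: since $\mathbb{A}^n_k$ is affine and connected, algebraic vector bundles on it correspond to finitely generated projective $A_n$-modules of constant rank, and such a bundle is trivial precisely when the corresponding module is free. The plan is to prove this reformulation by induction on $n$, the case $n = 0$ being the triviality of finite-dimensional $k$-vector spaces.

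For the inductive step, I would write $A_n = R[t]$ with $R := A_{n-1}$ and $t := x_n$, and let $P$ be a finitely generated projective $A_n$-module. It suffices to show that $P$ is \emph{extended} from $R$, i.e.\ that $P \cong P_0 \otimes_R A_n$ for some (necessarily finitely generated projective) $R$-module $P_0$: by the inductive hypothesis $P_0$ is then free, hence so is $P$. To detect extendedness I would invoke Quillen's local--global principle (``Quillen patching''): the set $\mathfrak{a} := \{\, r \in R : P_r \text{ is extended from } R_r\,\}$ is an ideal of $R$, and $P$ is extended from $R$ precisely when $\mathfrak{a} = R$; since being extended descends from $R$ to a principal localization $R_g$ once it holds after localizing at a maximal ideal (a finite-presentation argument), this in turn holds as soon as $P_{\mathfrak m}$ is extended from the local ring $R_{\mathfrak m}$ for every maximal ideal $\mathfrak m \subset R$.

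It then remains to treat the local case: $B := R_{\mathfrak m}$ is a regular local ring containing $k$, and one must show that the finitely generated projective $B[t]$-module $P_{\mathfrak m}$ is extended from $B$ --- equivalently, since projective modules over the local ring $B$ are free, that $P_{\mathfrak m}$ is itself free over $B[t]$. Here the tool is Horrocks' theorem: a finitely generated projective $B[t]$-module that becomes free after inverting some monic polynomial $f \in B[t]$ is already free. So the task reduces to manufacturing such a monic $f$. This is the genuinely hard step --- the one that kept Serre's problem open for two decades. One cannot appeal to the Bass--Quillen conjecture at this point, so one must exploit that $P$ actually comes from the polynomial ring $A_n$ over a field: I would first perform a generic linear (or Nagata-type) change of the coordinates $x_1, \dots, x_n$ so that the idempotent matrix presenting $P$ exhibits it as module-finite over $R$, and then apply Suslin's lemma on unimodular rows with a monic entry (equivalently, the structure theory of finitely generated projective modules over $D[t]$ for $D$ a localization of an affine $k$-domain) to produce the required monic polynomial in $t$.

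The main obstacle, as indicated, is entirely concentrated in this last normalization-plus-unimodular-rows step; the rest is the formal bookkeeping of Quillen patching and Horrocks' theorem. This outline follows the spirit of Quillen's proof; Suslin's independent argument instead combines Serre's splitting-off theorem with his theory of unimodular rows to reduce to the completion of a unimodular row with a monic coordinate, and the two approaches meet precisely at the monic-polynomial trick. Either way, one obtains that projective $A_n$-modules are free, i.e.\ that algebraic vector bundles over $\mathbb{A}^n_k$ are trivial.
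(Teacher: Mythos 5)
The paper does not actually prove this statement; it simply cites Quillen \cite{Q} and Suslin \cite{S1}, so your proposal has to be measured against those arguments. Your skeleton is Quillen's: reduce to finitely generated projective modules over $A_n=k[x_1,\dots,x_n]$, induct on $n$, and combine Quillen patching with local Horrocks. The gap sits exactly where you locate the difficulty, and as assembled the step would fail. Once you have localized at a maximal ideal $\mathfrak m\subset R=A_{n-1}$, the only freeness the inductive setup hands you is over $\mathrm{Frac}(R)[t]$, i.e.\ some $g\in R\setminus\{0\}$ with $P_g$ free; such a $g$ is a constant in $t$, not monic, and is useless when $g\in\mathfrak m$. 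The devices you invoke to fix this do not operate at that point: a generic linear or Nagata change of the coordinates $x_1,\dots,x_n$ is a move on the global polynomial ring and is no longer available over $R_{\mathfrak m}[t]$ (and making the presenting idempotent ``module-finite over $R$'' is not a meaningful condition for a projective $R[t]$-module), while Suslin's monic-entry lemma is a tool for completing unimodular rows in his separate proof, not a mechanism for producing a monic $f$ with $(P_{\mathfrak m})_f$ free. Indeed, ``every finitely generated projective $B[t]$-module over a regular local $k$-algebra $B$ is free'' is essentially the Bass--Quillen problem you rightly say you must avoid, so your local reduction has nothing left to run on.

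The repair is to reverse the order of operations, which is what Quillen actually does: first prove the global Horrocks--Quillen statement (if $P_f$ is extended from $R$ for some monic $f\in R[t]$, then $P$ is extended), using patching plus local Horrocks; then produce the monic globally by localizing at $S=k[x_n]\setminus\{0\}$, so that $S^{-1}A_n=k(x_n)[x_1,\dots,x_{n-1}]$, applying the inductive hypothesis over the field $k(x_n)$ (so the induction must be on the number of variables with the base field allowed to vary), and clearing denominators to get $f\in k[x_n]\setminus\{0\}$, which is automatically monic in $x_n$ up to a unit of $k$, with $P_f$ free. Then $P$ is extended from $A_{n-1}$ and induction finishes. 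Alternatively, Suslin's route stays global throughout: Serre splitting reduces to stably free modules and to completing unimodular rows over $A_{n-1}[x_n]$, and the Nagata/generic change of variables makes one entry monic before any localization. Either assembly closes the gap; your outline, as written, does not.
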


This is known as Serre's problem or Serre's question. In the context of $\mathbb{A}^{1}$-homotopy theory, there is a representability result which is analogous to the Pontryagin-Steenrod representability result for vector bundles in topology (cf. \cite{AHW}):

\begin{Thm}[{{\cite[Theorem 5.2.3]{AHW}}}]
Let $k$ be a field (or a ring which is smooth over a Dedekind ring with perfect residue fields or a ring for which the Bass-Quillen conjecture holds) and $n \geq 0$. For any smooth affine scheme $X$ over $k$, there are natural bijections
\begin{center}
$V_{n}(X) = [X, BGL_{n}]_{\mathcal{H}(k)}$,
\end{center}
where $BGL_{n}$ is the simplicial classifying space associated to the group scheme $GL_{n}$ of invertible matrices of rank $n$.
\end{Thm}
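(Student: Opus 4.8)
The plan is to realize $BGL_n$ as the motivic (i.e. Nisnevich-local and $\mathbb{A}^1$-local) replacement of the simplicial presheaf $\mathcal{B}_n$ whose value on $U \in Sm_k$ is the nerve of the groupoid $\mathrm{Vect}_n(U)$ of rank $n$ vector bundles on $U$, so that $\pi_0\mathcal{B}_n(U)=V_n(U)$, and then to show that for $X$ smooth affine the formation of this replacement does not change sections: $[X,BGL_n]_{\mathcal{H}(k)} \cong \pi_0\mathcal{B}_n(X) = V_n(X)$. The conceptual core is a general \emph{affine representability criterion}: if a simplicial presheaf $\mathcal{F}$ on $Sm_k$ (a) satisfies the affine Nisnevich Brown--Gersten (excision) property and (b) is $\mathbb{A}^1$-invariant on smooth affine schemes (in the sense that $\mathcal{F}(X)\to\mathcal{F}(X\times_k\mathbb{A}^1)$ is a weak equivalence for $X$ smooth affine, together with the corresponding statement for its sheaf of automorphisms / for $GL_n$-torsors), then the canonical map $\pi_0\mathcal{F}(X) \to [X,\mathcal{F}]_{\mathcal{H}(k)}$ is a bijection for every smooth affine $X$.

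I would prove this criterion by factoring the motivic fibrant replacement of $\mathcal{F}$ through a Nisnevich-local replacement $\mathcal{F}\to L_{Nis}\mathcal{F}$ followed by an $\mathbb{A}^1$-localization. Hypothesis (a), via the affine version of the Brown--Gersten descent theorem (an elementary distinguished square of affines is a homotopy pushout of affines), shows that $\mathcal{F}(X) \to L_{Nis}\mathcal{F}(X)$ is a weak equivalence for $X$ smooth affine. Hypothesis (b) shows that the iterated $\mathbb{A}^1$-singular construction $\mathrm{Sing}^{\mathbb{A}^1}$, and then the full $\mathbb{A}^1$-localization, again leaves the sections over a smooth affine $X$ unchanged up to weak equivalence: any elementary $\mathbb{A}^1$-homotopy between two maps into $\mathcal{F}$ out of $X$ is, on affines, witnessed by an actual map $X\times_k\mathbb{A}^1 \to \mathcal{F}$, which is precisely the content of $\mathbb{A}^1$-invariance. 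Concatenating the two steps identifies $[X,\mathcal{F}]_{\mathcal{H}(k)}$ with $\pi_0\mathcal{F}(X)$.

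It remains to verify (a) and (b) for $\mathcal{F}=\mathcal{B}_n$. Property (a) follows from faithfully flat (in particular Nisnevich) descent for quasi-coherent sheaves together with Milnor patching: over an elementary distinguished square of affine schemes a rank $n$ bundle is the same datum as a pair of bundles on the two members glued by an isomorphism over the intersection, and likewise for morphisms of bundles; this is exactly homotopy cartesianness of $\mathcal{B}_n$ on the square. Property (b) is the statement that $V_n(X) \to V_n(X\times_k\mathbb{A}^1)$ is a bijection for $X$ smooth affine over $k$, i.e. the affine Bass--Quillen property: over a field this is Lindel's theorem (in the geometric case reduced via N{\'e}ron--Popescu desingularization to smooth algebras essentially of finite type over a prime field), and over the more general bases in the statement it holds precisely in the range where the Bass--Quillen conjecture is known. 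Finally one checks that the $\mathbb{A}^1$-localization of $\mathcal{B}_n$ agrees with the standard simplicial classifying space $BGL_n$, since $GL_n$-torsors for the Nisnevich topology on smooth schemes are the same as rank $n$ vector bundles.

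The main obstacle is (b): the $\mathbb{A}^1$-invariance of $V_n$ on smooth affines is the one genuinely non-formal input — it is Lindel's theorem and, in the generality stated, the Bass--Quillen conjecture — whereas the general representability criterion and the Brown--Gersten verification are soft homotopical and descent-theoretic bookkeeping. A secondary subtlety is making the criterion precise enough to accommodate the non-connectedness of $\mathcal{B}_n$ (one genuinely needs the $GL_n$-torsor reformulation and some care with basepoints) and checking that all model-categorical replacements are compatible with restriction to the subcategory of smooth affine $k$-schemes.
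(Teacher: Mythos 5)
The paper itself gives no proof of this statement; it simply quotes it from Asok--Hoyois--Wendt \cite{AHW}, so your proposal has to be measured against that argument, of which it is essentially an attempted reconstruction. The overall architecture (affine Nisnevich excision plus homotopy invariance of vector bundles, then identification of the $\mathbb{A}^1$-localization with $BGL_n$ via the torsor description) is the right one, but there is a genuine gap at the central step. Hypothesis (b) of your representability criterion demands that $\mathcal{B}_n(X)\to\mathcal{B}_n(X\times_k\mathbb{A}^1_k)$ be a \emph{weak equivalence} for every smooth affine $X$, and you verify it only on $\pi_0$, i.e. by Lindel's theorem $V_n(X)\cong V_n(X\times_k\mathbb{A}^1_k)$. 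But for $n\geq 2$ the presheaf $\mathcal{B}_n$ is not sectionwise $\mathbb{A}^1$-invariant: a weak equivalence of nerves of groupoids must also be fully faithful, and with $R=\mathcal{O}_X(X)$ the map on automorphisms $GL_n(R)\to GL_n(R[t])$ is far from bijective (elementary matrices with polynomial entries already give new automorphisms). So the criterion, as you state it, simply does not apply to $\mathcal{B}_n$, and the Bass--Quillen/Lindel input cannot be inserted where you put it.

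The way \cite{AHW} circumvent this is precisely the part your sketch treats as soft bookkeeping: one first applies the singular construction $\mathrm{Sing}^{\mathbb{A}^1}$ to the Nisnevich-locally fibrant model of $BGL_n$ (this construction is automatically $\mathbb{A}^1$-invariant sectionwise, with no arithmetic input), and the genuinely hard step is then to show that $\mathrm{Sing}^{\mathbb{A}^1}$ of that model \emph{still satisfies affine Nisnevich excision}. That preservation statement is deduced from the $\mathbb{A}^1$-invariance of $GL_n$-torsors (i.e. of $V_n$) by a delicate patching argument over $X\times_k\mathbb{A}^1_k$ (extending and gluing bundles and naive homotopies over the pieces of an elementary square crossed with $\mathbb{A}^1_k$), and it is this step --- not an $\mathbb{A}^1$-invariance property of the presheaf $\mathcal{B}_n$ itself --- that consumes Lindel's theorem and, in the general bases of the statement, the known cases of the Bass--Quillen conjecture. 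Your closing paragraph flags basepoints and non-connectedness, but the missing ingredient is this excision-after-$\mathrm{Sing}^{\mathbb{A}^1}$ argument; without it the chain $[X,BGL_n]_{\mathcal{H}(k)}\cong\pi_0\bigl(\mathrm{Sing}^{\mathbb{A}^1}R_{Nis}BGL_n(X)\bigr)\cong V_n(X)$ is not established.
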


By analogy with the situation in topology, there is an obstruction theory involving a Postnikov tower for $BGL_{n}$. This was used in order to prove many classification results on vector bundles over affine schemes (cf. \cite{AF1}, \cite{AF2}). As a direct consequence of the representability result above it follows that algebraic vector bundles over any $\mathbb{A}^1$-contractible smooth affine scheme $X$ are trivial. The generalized Serre question asks whether algebraic vector bundles over any topologically contractible smooth affine complex variety $X$ are always trivial or not (cf. \cite[Question 6]{AO}). Due to Serre's splitting theorem (cf. \cite[Th{\'e}or{\`e}me 1]{S}), it suffices to study vector bundles of rank $\leq \dim (X)$. Let us now state some results on the generalized Serre question:

\begin{Thm}[{{\cite[Theorem 1]{G}}}]
If $X$ is a topologically contractible smooth affine complex variety, then $CH^{1}(X) = 0$.
\end{Thm}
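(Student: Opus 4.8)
The plan is to identify $CH^1(X)$ with the Picard group $\mathrm{Pic}(X)$ (valid since $X$ is smooth, cf.\ \cite[Corollary 19.2]{MVW}) and then to prove $\mathrm{Pic}(X)=0$ by showing separately that $\mathrm{Pic}(X)$ is torsion-free and that $\mathrm{Pic}(X)\otimes_{\mathbb{Z}}\mathbb{Q}=0$. For torsion-freeness I would run the long exact {\'e}tale cohomology sequence of the Kummer sequence $1\to\mu_n\to\mathbb{G}_m\xrightarrow{(-)^n}\mathbb{G}_m\to 1$ on $X$: since $\mathrm{Pic}(X)=H^1_{\text{\'et}}(X,\mathbb{G}_m)$, this exhibits $\mathrm{Pic}(X)[n]$ as a quotient of $H^1_{\text{\'et}}(X,\mu_n)$. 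Over $k=\mathbb{C}$ one has $\mu_n\cong\mathbb{Z}/n\mathbb{Z}$, and Artin's comparison theorem gives $H^1_{\text{\'et}}(X,\mu_n)\cong H^1(X^{an},\mathbb{Z}/n\mathbb{Z})$, which vanishes because $X^{an}$ is contractible. Hence $\mathrm{Pic}(X)$ has no $n$-torsion for any $n\geq 1$.

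For the rational statement I would choose, using Hironaka's resolution of singularities, a smooth projective $\mathbb{C}$-variety $\bar X$ containing $X$ as a dense open subvariety with $D:=\bar X\setminus X=\bigcup_{i=1}^r D_i$ a simple normal crossing divisor. Restriction of line bundles then yields the exact sequence
\[
\bigoplus_{i=1}^r \mathbb{Z}\cdot[\mathcal{O}_{\bar X}(D_i)]\longrightarrow \mathrm{Pic}(\bar X)\longrightarrow \mathrm{Pic}(X)\longrightarrow 0,
\]
so after $-\otimes_{\mathbb{Z}}\mathbb{Q}$ it suffices to show that the classes $[\mathcal{O}_{\bar X}(D_i)]$ span $\mathrm{Pic}(\bar X)\otimes\mathbb{Q}$. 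Contractibility of $X$ gives $H^1(X^{an},\mathbb{Q})=H^2(X^{an},\mathbb{Q})=0$. Since $X$ is dense open in the smooth proper variety $\bar X$, the restriction $H^1(\bar X^{an},\mathbb{Q})\to H^1(X^{an},\mathbb{Q})$ is injective (as $D$ has real codimension $2$ in the manifold $\bar X^{an}$), so $H^1(\bar X^{an},\mathbb{Q})=0$; by Hodge symmetry the irregularity of $\bar X$ vanishes, whence $\mathrm{Pic}^0(\bar X)=0$ and $\mathrm{Pic}(\bar X)=\mathrm{NS}(\bar X)$ is finitely generated and embeds, after tensoring with $\mathbb{Q}$, into $H^2(\bar X^{an},\mathbb{Q})$ via the first Chern class. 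On the other hand, Deligne's weight filtration on $H^2(X^{an},\mathbb{Q})$ — equivalently the Gysin sequence for the pair $(\bar X,D)$ — identifies the kernel of $H^2(\bar X^{an},\mathbb{Q})\to H^2(X^{an},\mathbb{Q})$ with the span of the cycle classes of the $D_i$; since the target is $0$, these classes span $H^2(\bar X^{an},\mathbb{Q})$. Combining, $\mathrm{Pic}(\bar X)\otimes\mathbb{Q}\cong H^2(\bar X^{an},\mathbb{Q})$ is spanned by the $[\mathcal{O}_{\bar X}(D_i)]$, and therefore $\mathrm{Pic}(X)\otimes\mathbb{Q}=0$.

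Putting the two halves together, $\mathrm{Pic}(X)$ is torsion-free and torsion, hence $\mathrm{Pic}(X)=0$, i.e.\ $CH^1(X)=0$. The only non-formal ingredient is the mixed-Hodge-theoretic input in the second step — the computation that $\ker\!\big(H^2(\bar X^{an},\mathbb{Q})\to H^2(X^{an},\mathbb{Q})\big)$ is generated by the boundary divisor classes, which requires invoking the structure of Deligne's weight spectral sequence to handle the combinatorics of the normal crossing boundary; everything else is a standard consequence of resolution of singularities, the Néron–Severi theorem, and Hodge theory for smooth projective varieties. (Alternatively one could organize the whole argument around the isomorphism $\mathrm{Pic}(X)\otimes\mathbb{Q}\cong H^2_{\mathcal{D}}(X,\mathbb{Q}(1))$ with Deligne cohomology, whose extreme terms are built from $H^1(X^{an},\mathbb{C})$ and $H^2(X^{an},\mathbb{Q})$ and hence vanish.)
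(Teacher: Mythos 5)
Your argument is correct, but it is not the route the paper takes: the paper offers no proof at all, simply citing Gurjar's theorem (\cite[Theorem 1]{G}), so you have in effect reconstructed the classical argument behind that citation. Your two-step structure is sound and complete modulo standard facts: torsion-freeness of $\mathrm{Pic}(X)=CH^1(X)$ via the Kummer sequence, Artin comparison and $H^1(X^{an},\mathbb{Z}/n\mathbb{Z})=0$; and $\mathrm{Pic}(X)\otimes\mathbb{Q}=0$ via a smooth SNC compactification $\bar X$, the surjection $\mathrm{Pic}(\bar X)\to\mathrm{Pic}(X)$ with kernel generated by the boundary components, vanishing of the irregularity from $H^1(\bar X^{an},\mathbb{Q})\hookrightarrow H^1(X^{an},\mathbb{Q})=0$, injectivity of $c_1$ on $\mathrm{NS}(\bar X)\otimes\mathbb{Q}$, and the identification of $\ker\bigl(H^2(\bar X^{an},\mathbb{Q})\to H^2(X^{an},\mathbb{Q})\bigr)$ with the span of the classes $[D_i]$. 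This is essentially the mixed-Hodge-theoretic proof in the spirit of Gurjar; note that the naive Stein/exponential-sequence shortcut would not work, since $\mathrm{Pic}(X)\to\mathrm{Pic}(X^{an})$ is not injective for open varieties, so your detour through a compactification is genuinely needed. Two small remarks: the injectivity of $H^1(\bar X^{an},\mathbb{Q})\to H^1(X^{an},\mathbb{Q})$ deserves a cleaner justification than ``real codimension $2$'' since $D$ is singular --- e.g.\ $H^1(\bar X^{an},X^{an};\mathbb{Q})\cong H^{BM}_{2n-1}(D;\mathbb{Q})=0$ by Alexander--Lefschetz duality because $\dim_{\mathbb{R}}D=2n-2$; and the same duality argument, $H^2(\bar X^{an},X^{an};\mathbb{Q})\cong H^{BM}_{2n-2}(D;\mathbb{Q})=\bigoplus_i\mathbb{Q}[D_i]$, gives the kernel computation in your key step without invoking the full weight spectral sequence. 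With those justifications spelled out, your proof stands on its own; what the paper's citation buys instead is brevity and reliance on the established literature.
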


In other words, line bundles on topologically contractible varieties are always trivial. As a direct corollary, one immediately obtains a positive answer to the generalized Serre question in dimension $1$ by using Serre's splitting theorem (cf. \cite[Th{\'e}or{\`e}me 1]{S}):

\begin{Kor}
If $X$ is a topologically contractible smooth affine complex variety of dimension $d=1$, then all algebraic vector bundles on $X$ are trivial.
\end{Kor}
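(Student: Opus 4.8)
The plan is to deduce the statement from Serre's splitting theorem together with the vanishing $CH^1(X) = 0$ established in the theorem above. First I would record that on a smooth variety there is a canonical isomorphism $\mathrm{Pic}(X) \cong CH^1(X)$ between the Picard group and the Chow group of codimension-one cycles (Weil and Cartier divisors coincide on a smooth scheme); combined with the preceding theorem this gives $\mathrm{Pic}(X) = 0$, i.e.\ every algebraic line bundle on $X$ is trivial.

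Next I would invoke Serre's splitting theorem (cf.\ \cite[Th{\'e}or{\`e}me 1]{S}): since $X$ is smooth affine of dimension $\dim(X) = 1$, any algebraic vector bundle $E$ on $X$ of rank $r > 1$ admits a nowhere-vanishing section and hence splits as $E \cong E' \oplus \mathcal{O}_X$ with $\mathrm{rank}(E') = r-1$. Iterating this, I obtain that every vector bundle of rank $r \geq 1$ on $X$ is isomorphic to $L \oplus \mathcal{O}_X^{r-1}$ for some line bundle $L$. By the first step $L \cong \mathcal{O}_X$, so $E \cong \mathcal{O}_X^{r}$ and the proof is complete.

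I do not expect any genuine obstacle here: the whole content sits in the two inputs, the theorem $CH^1(X) = 0$ and Serre's splitting theorem. The only point meriting a moment's care is the identification $CH^1(X) \cong \mathrm{Pic}(X)$, which is exactly where smoothness of $X$ enters and which lets one translate the vanishing of the codimension-one Chow group into the triviality of line bundles.
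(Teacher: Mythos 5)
Your argument is correct and is essentially the paper's own: the corollary is deduced from the preceding theorem of Gurjar ($CH^1(X)=\mathrm{Pic}(X)=0$, so line bundles are trivial) together with Serre's splitting theorem, which in dimension $1$ reduces every bundle to a line bundle plus a trivial summand. No gaps; the identification $CH^1(X)\cong\mathrm{Pic}(X)$ on the smooth variety $X$ is exactly the point the paper also relies on implicitly.
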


In dimension $2$, vector bundles can be completely classified by their Chern classes:

\begin{Thm}
If $X$ is a smooth affine variety of dimension $d=2$ over an algebraically closed field $k$, then the natural map $({c}_{1},{c}_{2}): {V}_{2}(X) \xrightarrow{\cong} CH^{1}(X) \times CH^{2}(X)$ is a bijection.
\end{Thm}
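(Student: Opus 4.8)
The plan is to isolate the two Chern classes and handle them one at a time. The first Chern class $c_1 \colon V_2(X) \to CH^1(X) = \mathrm{Pic}(X)$ is induced by the determinant homomorphism $GL_2 \to \mathbb{G}_m$; it is split surjective via $L \mapsto L \oplus \mathcal{O}_X$, so the content is to understand, for a fixed line bundle $L$, the fiber $V_2^L(X)$ of rank-$2$ bundles with $\det E \cong L$, and to show that $c_2$ restricts to a bijection $V_2^L(X) \xrightarrow{\cong} CH^2(X)$. Running this over all $L$ then assembles to the desired bijection $(c_1,c_2)$.

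To analyze $V_2^L(X)$ I would use obstruction theory along the $\mathbb{A}^1$-fibration $BSL_2 \to BGL_2 \xrightarrow{\det} B\mathbb{G}_m$ together with the representability theorem $V_2(X) \cong [X,BGL_2]_{\mathcal{H}(k)}$. The homotopy fiber $BSL_2$ is $\mathbb{A}^1$-$1$-connected, and since $SL_2 \simeq_{\mathbb{A}^1} \mathbb{A}^2 \setminus 0$ one has $\pi_2^{\mathbb{A}^1}(BSL_2) \cong \pi_1^{\mathbb{A}^1}(\mathbb{A}^2\setminus 0) \cong \mathbf{K}^{MW}_2$ (Morel). As $X$ is a smooth affine surface, hence of Nisnevich cohomological dimension $2$, the Postnikov/obstruction analysis collapses after one stage: the only obstruction-cum-complete-invariant for a bundle with prescribed determinant $L$ lives in $H^2_{Nis}(X,\mathbf{K}^{MW}_2)$, i.e. in the Chow–Witt group $\widetilde{CH}^2(X, L^\vee)$, and this refined class is the Euler class. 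The crucial input here is the theorem of Morel (see also Barge–Morel and Asok–Fasel) that on a smooth affine surface over the field $k$ the Euler class is a complete invariant, i.e. the Euler class map $V_2^L(X) \to \widetilde{CH}^2(X,L^\vee)$ is a bijection.

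It then remains to show that, because $k$ is algebraically closed, the forgetful map $\widetilde{CH}^2(X,L^\vee) \to CH^2(X)$ is an isomorphism, so that the Euler class recovers the ordinary $c_2$. For this I would use the short exact sequence of Nisnevich sheaves $0 \to \textbf{I}^3 \to \mathbf{K}^{MW}_2 \to \mathbf{K}^M_2 \to 0$ (with $\mathbf{K}^M_2$-cohomology identified with $CH^2$), together with $H^3_{Nis}(X,\textbf{I}^3) = 0$ by cohomological dimension; it suffices to see $H^2_{Nis}(X,\textbf{I}^3)=0$. Filtering $\textbf{I}^3$ by the powers $\textbf{I}^m$ with successive quotients $\textbf{I}^m/\textbf{I}^{m+1} \cong \mathbf{K}^M_m/2$ (Milnor conjecture) and computing via the Rost–Schmid complex, $H^2_{Nis}(X,\mathbf{K}^M_m/2)$ receives contributions only from $\mathbf{K}^M_{m-2}$ mod $2$ of the residue fields at codimension-$2$ points of $X$; these residue fields are algebraically closed, so these groups vanish for $m \geq 3$. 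The twist by $L^\vee$ is invisible to all of this over an algebraically closed field. Combining the three steps yields that $(c_1,c_2)$ is a bijection.

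The main obstacle is the middle step: the assertion that on a smooth affine surface a rank-$2$ bundle with fixed determinant is classified by its Euler class alone is a genuinely deep theorem — it is the surface case of Morel's theory of Euler classes, equivalently the classical Murthy–Swan classification over an algebraically closed field — and it rests not only on $\mathbb{A}^1$-representability and the computation of $\pi_1^{\mathbb{A}^1}(SL_2)$ but on the non-formal fact that no further obstruction or indeterminacy intervenes. By contrast, the identification $\widetilde{CH}^2(X) \cong CH^2(X)$ over an algebraically closed field is a bookkeeping argument with cohomological dimension and residue fields, and the splitting-off of $c_1$ is purely formal.
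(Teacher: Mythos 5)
The paper never proves this statement: it is quoted as background, being the classical Murthy--Swan classification of rank-two bundles on smooth affine surfaces over an algebraically closed field (the paper only supplies references, [KM] and [AF1], for the threefold analogues). Your route is therefore genuinely different: you rederive it from the modern machinery --- [AHW] representability of $V_2$ by $BGL_2$ (note that in rank $2$ this really is the [AHW] theorem; Morel's original argument excluded $r=2$), the Moore--Postnikov analysis of $BSL_2\to BGL_2\to B\mathbb{G}_m$ with $\pi_2^{\mathbb{A}^1}(BSL_2)\cong \mathbf{K}^{MW}_2$, the classification of fixed-determinant bundles by the Euler class in the twisted Chow--Witt group, and the collapse $\widetilde{CH}^2(X,L)\cong CH^2(X)$ over an algebraically closed field. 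The outline is correct, and your last step is sound; in fact it is simpler than your filtration argument (which, being an infinite filtration, would need a word about termination): the Rost--Schmid complex of $\textbf{I}^3$, with any twist, already vanishes termwise on $X$, since $I^3$ of the function field, $I^2$ at codimension-one points and $I^1$ at closed points are all zero here. Two points deserve explicit care: the fixed-determinant classification hides the action of units on lifts (a unit $a$ acts through $\langle a\rangle$, which is invisible on the Milnor K-theory quotient, so harmless over $k=\bar{k}$), and, as you yourself flag, the ``Euler class is a complete invariant'' input (Barge--Morel, Morel, Asok--Fasel) is, over an algebraically closed field, essentially equivalent in strength to the theorem being proved --- so your argument is a correct derivation from the modern literature rather than a self-contained proof, whereas the classical proof proceeds by commutative algebra, cancellation and explicit realization of cycle classes, and your route buys the finer Chow--Witt statement over an arbitrary perfect field.
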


The corresponding vanishing statement for topologically contractible surfaces was proven by Gurjar-Shastri in 1989:

\begin{Thm}
If $X$ is a topologically contractible smooth affine complex variety of dimension $d=2$, then $CH^{2}(X) = 0$. In particular, all algebraic vector bundles on $X$ are trivial.
\end{Thm}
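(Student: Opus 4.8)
The plan is to reduce the statement to two inputs: the rationality of $X$, and a localization argument for zero-cycles on a smooth compactification. Since $X$ is a smooth affine surface, we have $CH^2(X) \cong CH_0(X)$, the Chow group of zero-cycles, so it suffices to prove $CH_0(X)=0$. The one genuinely deep ingredient I would invoke is the structural result of Gurjar--Shastri on complex homology $2$-cells, namely that a topologically contractible (indeed, any $\mathbb{Q}$-acyclic) smooth affine complex surface is rational; I would cite this rather than reprove it, since it rests on the classification theory of $\mathbb{Q}$-homology planes (Fujita, Miyanishi--Sugie, et al.) and is by far the hardest part.

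Granting rationality, I would proceed as follows. Choose a smooth projective compactification $X \hookrightarrow \bar{X}$ with boundary $D := \bar{X}\setminus X$; by resolution of singularities in dimension $2$ we may take $\bar X$ smooth and $D$ a divisor, and $D$ is nonempty because $X$ is affine. As $X$ is rational, so is the smooth projective surface $\bar{X}$; rational surfaces are rationally connected, hence $CH_0(\bar{X}) \cong \mathbb{Z}$, generated by the class of any closed point. Now apply the localization exact sequence for the closed immersion $D \hookrightarrow \bar{X}$ with open complement $X$:
$$CH_0(D) \xrightarrow{\,i_*\,} CH_0(\bar{X}) \longrightarrow CH_0(X) \longrightarrow 0.$$
Picking a closed point $p \in D$, its pushforward $i_*[p]=[p]$ is a generator of $CH_0(\bar{X})\cong\mathbb{Z}$, so $i_*$ is surjective and therefore $CH_0(X)=0$, i.e. $CH^2(X)=0$. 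The ``in particular'' clause then follows by combining $CH^1(X)=0$ (the theorem of Gurjar, \cite{G}) with $CH^2(X)=0$ and the classification $(c_1,c_2)\colon V_2(X)\xrightarrow{\cong}CH^1(X)\times CH^2(X)$ over the algebraically closed field $\mathbb{C}$, which gives triviality of all rank-$2$ bundles; Serre's splitting theorem (\cite{S}) reduces vector bundles of arbitrary rank to rank $\leq 2$.

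The main obstacle is precisely the rationality statement for topologically contractible surfaces; everything downstream of it is formal, relying only on the localization sequence for Chow groups and the vanishing of $CH_0$ of rationally connected projective surfaces. The only minor points to verify are that a compactification can be taken with $\bar X$ smooth and $D$ a genuine (reduced, possibly reducible) divisor, and that the pushforward of a closed point along the proper birational maps exhibiting $\bar X$ as an iterated blow-up of $\mathbb{P}^2$ indeed identifies $CH_0(\bar X)$ with $\mathbb{Z}\cdot[\mathrm{pt}]$ — both standard.
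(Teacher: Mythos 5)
Your proposal is correct and rests on the same essential input as the paper, whose proof is just the citation of Gurjar--Shastri \cite{GS1}, \cite{GS2}: the rationality of topologically contractible (indeed $\mathbb{Q}$-acyclic) smooth affine complex surfaces, from which your deduction of $CH_0(X)=0$ via a smooth compactification, birational invariance of $CH_0$ for smooth projective surfaces, and the localization sequence is the standard way to extract the Chow-group vanishing. The ``in particular'' clause via $CH^1(X)=0$ (Gurjar, \cite{G}), the $(c_1,c_2)$-classification of rank-$2$ bundles and Serre's splitting theorem likewise matches the paper's surrounding discussion, so your argument is essentially the paper's, with the routine steps spelled out.
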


\begin{proof}
See \cite{GS1} and \cite{GS2}.
\end{proof}

Increasing the dimension, one always has to prove more classification results and consequently compute more cohomology groups:

\begin{Thm}[{{\cite[Theorem 2.1(iii)]{KM}}}]
If $X$ is a smooth affine variety of dimension $d=3$ over an algebraically closed field $k$ of characteristic $\neq 2$, then the natural map $({c}_{1},{c}_{2},{c}_{3}): {V}_{3}(X) \xrightarrow{\cong} CH^{1}(X) \times CH^{2}(X) \times CH^{3}(X)$ is a bijection.
\end{Thm}

\begin{Thm}[{{\cite[Theorem 1]{AF1}}}]
If $X$ is a smooth affine variety of dimension $d=3$ over an algebraically closed field $k$ of characteristic $\neq 2$, then the natural map $({c}_{1},{c}_{2}): {V}_{2}(X) \xrightarrow{\cong} CH^{1}(X) \times CH^{2}(X)$ is a bijection.
\end{Thm}

\begin{Kor}
If $X$ is a topologically contractible smooth affine complex variety of dimension $d=3$, then all algebraic vector bundles on $X$ are trivial if and only if $CH^{2}(X)$ and $CH^{3}(X)$ are trivial.
\end{Kor}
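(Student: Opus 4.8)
The plan is to derive this corollary as a formal consequence of the classification results stated above, namely the rank-$2$ classification \cite[Theorem 1]{AF1} and the rank-$3$ classification \cite[Theorem 2.1(iii)]{KM}, together with the vanishing $CH^1(X)=0$ for topologically contractible $X$ from \cite[Theorem 1]{G} and Serre's splitting theorem. The essential structural point is that Serre's splitting theorem reduces the triviality of \emph{all} algebraic vector bundles on the smooth affine threefold $X$ to the triviality of those of rank at most $3$, so it suffices to understand $V_1(X)$, $V_2(X)$ and $V_3(X)$.

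For the ``only if'' direction, assume every algebraic vector bundle on $X$ is trivial. Then $V_3(X)$ is a one-element set, so the bijection $(c_1,c_2,c_3)\colon V_3(X) \xrightarrow{\cong} CH^1(X) \times CH^2(X) \times CH^3(X)$ forces the product on the right-hand side to have exactly one element; hence $CH^2(X) = CH^3(X) = 0$ (and also $CH^1(X)=0$). Note that this direction uses neither topological contractibility nor Serre's splitting theorem.

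For the ``if'' direction, assume $CH^2(X)=0$ and $CH^3(X)=0$. Since $X$ is topologically contractible, \cite[Theorem 1]{G} gives $CH^1(X)=0$, so all three Chow groups vanish. I would then go through the ranks in turn: line bundles are classified by $\mathrm{Pic}(X) = CH^1(X) = 0$ and so are trivial; rank-$2$ bundles are trivial since $(c_1,c_2)\colon V_2(X) \xrightarrow{\cong} CH^1(X) \times CH^2(X)$ is a bijection onto a singleton; rank-$3$ bundles are trivial since $(c_1,c_2,c_3)\colon V_3(X) \xrightarrow{\cong} CH^1(X) \times CH^2(X) \times CH^3(X)$ is a bijection onto a singleton. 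Finally, any vector bundle $E$ of rank $r > 3$ splits as $E \cong E' \oplus \mathcal{O}_X^{\oplus (r-3)}$ with $E'$ of rank $3$ by iterated use of Serre's splitting theorem, and $E'$ is trivial by the previous step, so $E$ is trivial as well. Hence all algebraic vector bundles on $X$ are trivial.

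Since every ingredient is quoted above, there is no genuinely difficult step. The only point requiring a little care is that the rank-$3$ classification does not, on its own, imply triviality of rank-$2$ (or rank-$1$) bundles, so one must separately invoke the rank-$2$ classification and the identification of line bundles with $CH^1$, and then use Serre's splitting theorem to pass from arbitrary rank down to rank $3$.
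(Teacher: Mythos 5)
Your argument is correct and is exactly the route the paper intends: the corollary is stated as an immediate consequence of the rank-$2$ and rank-$3$ classification theorems of Asok--Fasel and Kumar--Murthy, combined with Gurjar's vanishing of $CH^{1}$ for topologically contractible varieties and Serre's splitting theorem to reduce to rank $\leq 3$. Your handling of both directions, including the observation that the "only if" direction follows purely from the rank-$3$ bijection, is sound and matches the paper's (unwritten) reasoning.
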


Unfortunately, in higher dimensions, it follows from the construction of stably trivial non-trivial vector bundles given in \cite{NMK} that one cannot classify vector bundles just by considering Chern classes in Chow groups. Nevertheless, one can still hope to get some computable cohomological obstructions for the triviality of all vector bundles:

\begin{Thm}[{{\cite[Corollary 3.21]{Sy}}}]\label{Syed}
If $X$ is a smooth affine variety of dimension $d=4$ over an algebraically closed field $k$ with $6 \in k^{\times}$, all algebraic vector bundles on $X$ are trivial if $CH^{\mathit{i}}(X)=0$ for $1 \leq \mathit{i} \leq 4$ and $H_{Nis}^{2}(X, \textbf{\textrm{I}}^{3})=0$.
\end{Thm}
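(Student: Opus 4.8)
The natural framework is the $\mathbb{A}^1$-homotopy classification of vector bundles on smooth affine schemes, $V_n(X) \cong [X, BGL_n]_{\mathcal{H}(k)}$, combined with obstruction theory along the $\mathbb{A}^1$-Postnikov tower of $BGL_n$, whose layers are Eilenberg--MacLane objects on the strictly $\mathbb{A}^1$-invariant homotopy sheaves $\pi_j^{\mathbb{A}^1}(BGL_n) = \pi_{j-1}^{\mathbb{A}^1}(GL_n)$. By Serre's splitting theorem it suffices to trivialise bundles of rank $\le d = 4$, and since $CH^1(X) = \mathrm{Pic}(X) = 0$ every such bundle has trivial determinant, so one may work with $BSL_n$ in place of $BGL_n$. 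As $SL_n$ is $\mathbb{A}^1$-connected, $BSL_n$ is $\mathbb{A}^1$-$1$-connected, and the obstructions to a map $X \to BSL_n$ being $\mathbb{A}^1$-null lie in $H^j_{Nis}(X, \pi_{j-1}^{\mathbb{A}^1}(SL_n))$ for $j \ge 2$; since $X$ is smooth of dimension $4$, the Gersten resolution forces $H^j_{Nis}(X, \mathcal{F}) = 0$ for $j > 4$ and every strictly $\mathbb{A}^1$-invariant $\mathcal{F}$, so only $j \in \{2, 3, 4\}$ contribute.

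Rank $1$ is immediate. For rank $4$ I would use an Euler-class reduction: a rank-$4$ bundle $E$ with trivial determinant on the affine fourfold $X$ splits off a free rank-one summand if and only if its Euler class in $\widetilde{CH}^4(X) = H^4_{Nis}(X, \mathbf{K}^{MW}_4)$ vanishes (Morel). The exact sequence $0 \to \textbf{I}^5 \to \mathbf{K}^{MW}_4 \to \mathbf{K}^M_4 \to 0$, together with $H^5_{Nis}(X, -) = 0$ and the vanishing $\textbf{I}^5 = 0$ --- which holds because $k$ is algebraically closed, so $\mathrm{cd}_2\, k(X) \le 4$ and $I^5$ vanishes on every residue field of $X$ by the Arason--Pfister Hauptsatz and the Milnor conjecture --- yields $\widetilde{CH}^4(X) \cong CH^4(X) = 0$. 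Hence $E \cong E' \oplus \mathcal{O}_X$ with $E'$ of rank $3$, and the task is reduced to proving $[X, BSL_2]_{\mathcal{H}(k)} = [X, BSL_3]_{\mathcal{H}(k)} = *$.

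For both mapping sets the first obstruction lies in $H^2_{Nis}(X, \pi_1^{\mathbb{A}^1}(SL_n)) = H^2_{Nis}(X, \mathbf{K}^{MW}_2) = \widetilde{CH}^2(X)$, and this is precisely where the hypothesis $H^2_{Nis}(X, \textbf{I}^3) = 0$ enters: the exact sequence $0 \to \textbf{I}^3 \to \mathbf{K}^{MW}_2 \to \mathbf{K}^M_2 \to 0$ yields an exact sequence $H^2_{Nis}(X, \textbf{I}^3) \to \widetilde{CH}^2(X) \to CH^2(X)$ whose outer terms vanish, so $\widetilde{CH}^2(X) = 0$. The remaining obstructions are $H^3_{Nis}(X, \pi_2^{\mathbb{A}^1}(SL_n))$ and $H^4_{Nis}(X, \pi_3^{\mathbb{A}^1}(SL_n))$ for $n \in \{2, 3\}$. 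Here I would invoke the computations of Asok--Fasel of the $\mathbb{A}^1$-homotopy sheaves $\pi_2^{\mathbb{A}^1}(\mathbb{A}^m \setminus 0)$ and $\pi_3^{\mathbb{A}^1}(\mathbb{A}^m \setminus 0)$ for $m \in \{2, 3\}$ and, through the fibre sequences $SL_{m-1} \to SL_m \to \mathbb{A}^m \setminus 0$, of $\pi_2^{\mathbb{A}^1}(SL_m)$ and $\pi_3^{\mathbb{A}^1}(SL_m)$; the hypothesis $6 \in k^\times$ is what makes these sheaves tractable, since characteristics $2$ and $3$ have to be excluded for Milnor--Witt K-theory, the Milnor conjecture, and the ``$\mathbf{K}^M_j/24$''-type torsion pieces (reflecting the classical $\pi_3^s = \mathbb{Z}/24$) to behave as expected in the Nisnevich topology. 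One then runs a dévissage, filtering each of these sheaves by subquotients that are twists of $\mathbf{K}^M_j$, $\mathbf{k}^M_j = \mathbf{K}^M_j/2$ and $\textbf{I}^j$, and showing that the Nisnevich cohomology of each subquotient on $X$ in the relevant degree vanishes, using: $H^j_{Nis}(X, \mathbf{K}^M_n) = 0$ for $j > n$ and $= CH^n(X) = 0$ for $j = n$; the short exact sequences $0 \to \textbf{I}^{j+1} \to \textbf{I}^j \to \mathbf{k}^M_j \to 0$; and the collapse $\textbf{I}^5 = 0$, which truncates the filtration by powers of $\textbf{I}$. Putting these together kills every obstruction, and all bundles of rank $\le 4$ --- hence all bundles on $X$ --- are trivial.

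The crux --- and the step I expect to be hardest --- is the dévissage in the previous paragraph: identifying $\pi_2^{\mathbb{A}^1}(SL_n)$ and $\pi_3^{\mathbb{A}^1}(SL_n)$ for $n \le 3$ precisely enough, and then verifying that \emph{every} one of their Nisnevich cohomology groups on $X$ in degrees $3$ and $4$ vanishes under the sole remaining hypotheses $CH^i(X) = 0$ for $1 \le i \le 4$ and $H^2_{Nis}(X, \textbf{I}^3) = 0$. The delicate point is that a naive dévissage also produces cohomology of $\mathbf{k}^M_j$- and $\textbf{I}^j$-twists in degrees below the weight, which is not obviously controlled by Chow groups; the argument has to exploit both the low Nisnevich cohomological dimension of $X$ and the vanishing $\textbf{I}^5 = 0$ over an algebraically closed field in order to fold every such group back into $CH^i(X)$ and $H^2_{Nis}(X, \textbf{I}^3)$, which is exactly the package appearing in the statement.
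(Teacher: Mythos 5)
Your overall framework (representability, Postnikov tower of $BSL_n$, Euler class for the top rank, and the observation that $H^2_{Nis}(X,\textbf{I}^3)=0$ together with $CH^2(X)=0$ kills the first obstruction $H^2_{Nis}(X,\mathbf{K}^{MW}_2)$) is sensible, and the rank-$4$ reduction via Morel's Euler class and the vanishing of $\textbf{I}^5$ on a fourfold over an algebraically closed field is correct. But the step you yourself flag as the crux is a genuine gap, not merely a hard computation you are deferring: the $H^4$-obstructions for $BSL_2$ and $BSL_3$ require the sheaves $\pi_3^{\mathbb{A}^1}(SL_2)\cong\pi_3^{\mathbb{A}^1}(\mathbb{A}^2\setminus 0)$ and $\pi_3^{\mathbb{A}^1}(SL_3)$, and these are not among the Asok--Fasel computations you invoke. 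What is known is $\pi_2^{\mathbb{A}^1}(\mathbb{A}^2\setminus 0)$ (used for threefolds) and $\pi_3^{\mathbb{A}^1}(\mathbb{A}^3\setminus 0)$ (the source of the $\mathbf{K}^M_4/24$ phenomenon, used for splitting rank-$3$ bundles on fourfolds); the second nonstable sheaf $\pi_3^{\mathbb{A}^1}(\mathbb{A}^2\setminus 0)$ is not available, so the proposed d\'evissage for rank $2$ cannot even be started, and no substitute argument is offered. This is precisely the case (stably trivial rank-$2$ bundles on a fourfold) that the cited result is designed to handle.

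For comparison, the paper gives no argument of its own: the proof is a citation of \cite[Corollary 3.21]{Sy}, whose route avoids unstable obstruction theory for $BSL_2$ altogether. There, the vanishing of all Chow groups gives $\widetilde{K}_0(X)=0$, so every bundle is stably trivial; ranks $3$ and $4$ are then handled by cancellation theorems (Fasel--Rao--Swan, and Suslin's cancellation for rank $d=4$, which is where the hypothesis $6=(d-1)!\in k^{\times}$ enters concretely, rather than diffusely through ``characteristics $2$ and $3$'' as in your sketch); and the remaining rank-$2$ case is treated via the Vaserstein symbol and the group $W_{SL}(X)$ (cf.\ the use of \cite[Theorem 3.19]{Sy} in the proof of Theorem \ref{Bicyclic-VB}), with $H^2_{Nis}(X,\textbf{I}^3)=0$ and the Chow-group vanishing forcing $W_{SL}(X)=0$ through Gersten--Grothendieck--Witt-type d\'evissage. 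In other words, the symplectic/Hermitian $K$-theory detour exists exactly because the homotopy sheaf your proof needs is unknown; as written, your proposal does not close that gap.
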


\begin{Kor}
If $X$ is a topologically contractible smooth affine complex variety of dimension $d=4$, then all algebraic vector bundles on $X$ are trivial if $CH^{\mathit{i}}(X)=0$ for $\mathit{i}=2,3,4$ and $H_{Nis}^{2}(X, \textbf{\textrm{I}}^{3})=0$.
\end{Kor}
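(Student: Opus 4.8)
The plan is to combine the previous corollary (the dimension 4 case of the generalized Serre question over an algebraically closed field, Theorem \ref{Syed}) with the fact that a topologically contractible smooth affine complex variety of dimension $4$ automatically satisfies all the hypotheses of that corollary except possibly the vanishing of $H_{Nis}^{2}(X,\textbf{I}^3)$. Concretely, I would proceed as follows.

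First, note that $k=\mathbb{C}$ is algebraically closed of characteristic $0$, so in particular $6 \in k^{\times}$ and Theorem \ref{Syed} applies to any smooth affine complex fourfold whose Chow groups $CH^i$ vanish for $1 \le i \le 4$ and for which $H_{Nis}^{2}(X,\textbf{I}^3)=0$. Thus it suffices to check that $CH^i(X)=0$ for $1 \le i \le 4$ for a topologically contractible smooth affine complex fourfold $X$; the vanishing of $H_{Nis}^{2}(X,\textbf{I}^3)$ is then the only remaining hypothesis, and it is precisely the one retained in the statement.

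Second, for the Chow group vanishing: the case $i=1$ is exactly the cited theorem of Gurjar (line bundles on topologically contractible smooth affine complex varieties are trivial, i.e.\ $CH^1(X)=0$). For $i=4=\dim(X)$, the top Chow group $CH^4(X)=CH_0(X)$ of a smooth affine variety of dimension $4$ over $\mathbb{C}$ which is topologically contractible vanishes — this follows, for instance, because a contractible complex manifold has trivial reduced singular homology, and $CH_0$ of a smooth affine variety injects appropriately into such invariants via the cycle class map combined with the fact that $0$-cycles on a rational-type/contractible variety are rationally trivial; alternatively one invokes the known statement in the literature (the same circle of results as \cite{AF1}, \cite{KM}, \cite{GS1}, \cite{GS2}) that topologically contractible smooth affine complex varieties have trivial $CH_0$. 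The remaining middle Chow groups $CH^2(X)$ and $CH^3(X)$ are \emph{exactly} the hypotheses $CH^i(X)=0$ for $i=2,3$ explicitly imposed in the statement of the corollary, together with $CH^4(X)=0$ which as just noted is automatic (so listing $i=2,3,4$ is a mild redundancy: $i=4$ is free). Hence all Chow hypotheses of Theorem \ref{Syed} hold once we assume $CH^i(X)=0$ for $i=2,3$ (and $i=4$, which is automatic), and the single genuine extra input is $H_{Nis}^{2}(X,\textbf{I}^3)=0$.

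Finally, I would simply feed these verified hypotheses into Theorem \ref{Syed} to conclude that all algebraic vector bundles on $X$ are trivial. I expect the only step requiring care to be the justification that $CH^4(X)=0$ for a topologically contractible smooth affine complex fourfold — i.e., the triviality of the Chow group of zero-cycles — which is where one must appeal to the deeper structure results on topologically contractible surfaces/threefolds and their consequences rather than to a purely formal argument; everything else is a direct citation of the preceding theorem together with Gurjar's result on $CH^1$.
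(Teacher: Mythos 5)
Your core argument is correct and is exactly the (implicit) proof in the paper: feed Theorem \ref{Syed} (which applies since $\mathbb{C}$ is algebraically closed with $6\in\mathbb{C}^{\times}$) the hypotheses $CH^{i}(X)=0$ for $i=2,3,4$ and $H^{2}_{Nis}(X,\textbf{I}^{3})=0$, and supply the missing case $i=1$ by Gurjar's theorem that topologically contractible smooth affine complex varieties have trivial Picard group.

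However, your side claim that $CH^{4}(X)=CH_{0}(X)$ vanishes automatically for a topologically contractible smooth affine complex fourfold is not justified and, to the best of current knowledge, not a theorem. The cycle class map argument you sketch does not work: $CH_{0}$ of a smooth complex affine variety does not inject into singular (co)homology, and the implication ``trivial topology $\Rightarrow$ trivial $CH_{0}$'' is a Bloch-conjecture-type statement that is only available in dimension $2$, where it rests on the Gurjar--Shastri rationality results; there is no analogous ``known statement in the literature'' in dimensions $3$ or $4$. Indeed, this is precisely why the paper retains $CH^{3}(X)=0$ as a hypothesis in its dimension-$3$ corollary and $CH^{4}(X)=0$ here, and why its main theorems about (torsion or vanishing) top Chow groups of cyclic coverings are nontrivial. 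Since $CH^{4}(X)=0$ is in any case assumed in the statement, your proof of the corollary as stated goes through, but the remark that listing $i=4$ is redundant should be deleted.
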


Note that vector bundles on stably $\mathbb{A}^{1}$-contractible varieties of low dimension are therefore trivial:

\begin{Kor}
If $X$ is a stably $\mathbb{A}^1$-contractible smooth affine variety of dimension $d \leq 4$ over an algebraically closed field with $6 \in k^{\times}$, then all algebraic vector bundles on $X$ are trivial.
\end{Kor}

For topologically contractible smooth affine complex varieties of dimension $\leq 4$, it remains to determine whether all the obstructions to the triviality of all vector bundles are vanishing or not. In particular, it is of interest whether Chow groups of topologically contractible smooth affine complex varieties are trivial or not. We conclude this section with the following observation:

\begin{Prop}\label{Euler}
Let $X$ be a smooth affine variety of dimension $d \geq 3$ over an algebraically closed field $k$ of characteristic $0$. Assume that $CH^d (X) \otimes_{\mathbb{Z}} \mathbb{Q} = 0$. Then every vector bundle of rank $d$ has trivial direct summand of rank $1$.
\end{Prop}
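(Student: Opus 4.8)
The plan is to reduce the statement to a statement about the vanishing of an Euler class obstruction living in a Chow group, and then exploit the hypothesis $CH^d(X) \otimes_{\mathbb{Z}} \mathbb{Q} = 0$ together with a torsion-primary argument. First I would recall the splitting principle from $\mathbb{A}^1$-homotopy theory: for a rank $d$ vector bundle $E$ over a smooth affine $k$-variety $X$ of dimension $d$, the bundle $E$ has a trivial rank $1$ direct summand if and only if the Euler class $e(E)$ vanishes in the appropriate cohomology group (the Chow-Witt group $\widetilde{CH}^d(X)$, or, after passing to a suitable quotient, in $CH^d(X)$ itself). This is the content of Morel's theory of Euler classes and the obstruction-theoretic results of Asok-Fasel; since $X$ is affine of dimension $d$ over a field and $E$ has rank equal to $\dim X$, the top obstruction to splitting off a free rank $1$ summand is precisely this Euler class.

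Next I would analyze the Euler class $e(E) \in \widetilde{CH}^d(X)$. There is a canonical map $\widetilde{CH}^d(X) \to CH^d(X)$ forgetting the quadratic refinement, and its kernel is controlled by a cohomology group built from the fundamental ideal sheaf $\mathbf{I}^{d+1}$ (in fact by $H^d_{Nis}(X, \mathbf{I}^{d+1})$, which for $X$ affine of dimension $d$ agrees with a cohomology group of the Witt sheaf; this is $2$-torsion-related via the filtration of the Witt ring). So the image $c_d(E) \in CH^d(X)$ of the Euler class is the top Chern class. By hypothesis $CH^d(X)$ is torsion after tensoring with $\mathbb{Q}$, i.e. $CH^d(X)$ is a torsion group, so $c_d(E)$ is a torsion class. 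The key point is then to upgrade this: over an algebraically closed field of characteristic $0$, the relevant cohomology of $\mathbf{I}^{j}$-sheaves for $j \geq 2$ is $2$-primary torsion, and one uses a transfer/norm argument or the comparison of $CH^d$ with $\widetilde{CH}^d$ to conclude that $e(E)$ itself is torsion in $\widetilde{CH}^d(X)$. Finally I would invoke the fact (using that $k$ is algebraically closed of characteristic $0$, so that $X$ has enough rational points and one may appeal to results of Asok-Fasel or the Bertini-type moving arguments) that a torsion Euler class over a smooth affine variety of dimension $d$ over such a field actually forces $E$ to split off a trivial line bundle — the torsion is killed because the Euler class of $E$, being the obstruction in a group with no free part, can be trivialized by a generic section argument: a sufficiently general section of $E$ has zero locus a $0$-cycle whose class is torsion, and over an algebraically closed field a torsion $0$-cycle class on a smooth affine variety that supports the zero scheme of a section can be rationally deformed to the empty scheme.

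The main obstacle I anticipate is the last step: passing from ``the Euler class is torsion'' to ``$E$ actually has a nowhere-vanishing section'' (equivalently, a trivial rank $1$ summand). The vanishing of the Euler class in $\widetilde{CH}^d(X)$ is the precise obstruction, so one genuinely needs the Euler class to be \emph{zero}, not merely torsion; bridging this gap requires either (i) showing that $\widetilde{CH}^d(X)$ itself has no torsion in the relevant range under our hypotheses — which would follow if $H^d_{Nis}(X, \mathbf{I}^{d+1})$ vanishes, a condition not assumed here — or (ii) a direct geometric argument that a rank $d$ bundle on a smooth affine $d$-fold over an algebraically closed field whose top Chern class in $CH^d(X)$ is torsion must already have a unimodular section. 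I would attempt route (ii), using that for $d \geq 3$ and $k$ algebraically closed of characteristic $0$, the results of Murthy and Bhatwadekar-Sridharan on Euler class groups identify the obstruction to splitting with a $0$-cycle-type group $E(X)$ that maps to $CH^d(X)$ with torsion-free kernel in this situation; then $CH^d(X)$ torsion plus the algebraic closure of $k$ (so that torsion $0$-cycles of degree $0$ on affine varieties are rationally trivial, by a Roitman-type or Suslin-rigidity argument) forces the obstruction class to vanish. Verifying that this Euler-class-group machinery applies verbatim — in particular that the comparison map has torsion-free kernel for $d \geq 3$ over an algebraically closed field — will be the technical heart of the argument.
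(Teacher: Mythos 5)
Your instinct that the statement reduces to a torsion question about an Euler-class-type obstruction is reasonable, but as written your argument stops exactly at its decisive step, and that step is where the paper's proof is in fact almost trivial. Two simplifications dissolve the difficulty you flag. First, there is no need for Chow--Witt groups, Morel's refined Euler class, or the Bhatwadekar--Sridharan Euler class group comparison: since $k$ is algebraically closed and $E$ has rank $d=\dim X$, Murthy's theorem (the reference [M, Theorem 3.8] in the paper) already says that $E$ splits off a trivial rank $1$ summand \emph{if and only if} its top Chern class vanishes in $CH^d(X)$. So the obstruction lives in $CH^d(X)$ itself from the start, and the question of whether the comparison map from a finer obstruction group has torsion-free kernel never arises. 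Second, and more importantly, the "torsion versus zero" gap you anticipate does not exist: for a smooth affine variety of dimension $d\geq 3$ (indeed $\geq 2$) over an algebraically closed field of characteristic $0$, the group $CH^d(X)=CH_0(X)$ is \emph{uniquely divisible} (the Roitman-type torsion-freeness you allude to, combined with divisibility; the paper cites Srinivas). Hence $CH^d(X)$ is a $\mathbb{Q}$-vector space, so $CH^d(X)\otimes_{\mathbb{Z}}\mathbb{Q}\cong CH^d(X)$, and the hypothesis $CH^d(X)\otimes_{\mathbb{Z}}\mathbb{Q}=0$ forces $CH^d(X)=0$ outright. In particular $c_d(E)=0$ and Murthy's theorem finishes the proof.

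Concretely, then, the gap in your proposal is that its "technical heart" -- verifying that the Euler-class-group machinery applies and that the relevant comparison map has torsion-free kernel -- is left unproved, and your interpretation of the hypothesis as merely "$CH^d(X)$ is torsion" sets up a bridging problem (torsion Euler class $\Rightarrow$ splitting) that you do not actually cross; the vague "generic section / rational deformation" argument at the end is not a proof. The fix is not to bridge that gap but to notice it is vacuous: the hypothesis, read against the known structure of $CH_0$ of smooth affine varieties over algebraically closed fields of characteristic $0$, is literally equivalent to the vanishing of $CH^d(X)$, after which no torsion analysis, no $\mathbf{I}^{d+1}$-cohomology, and no Suslin rigidity are needed.
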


\begin{proof}
A vector bundle of rank $d$ has a trivial direct summand of rank $1$ if and only if its top Chern class in $CH^d (X)$ is trivial (cf. \cite[Theorem 3.8]{M}). It is well-known that $CH^d (X)$ is a uniquely divisible abelian group (cf. \cite{Sr}) and hence a $\mathbb{Q}$-vector space. So there is a $\mathbb{Q}$-linear isomorphism of the form $CH^d (X) \cong \mathbb{Q}^I$ for some set $I$. By assumption, $CH^d (X) \otimes_{\mathbb{Z}} \mathbb{Q} = 0$. But, as $\mathbb{Q} \otimes_{\mathbb{Z}} \mathbb{Q} \cong \mathbb{Q}$, it follows that $0 = CH^d (X) \otimes_{\mathbb{Z}} \mathbb{Q} \cong \mathbb{Q}^I \otimes_{\mathbb{Z}} \mathbb{Q} \cong \mathbb{Q}^I \cong CH^d (X)$, which finishes the proof.
\end{proof}

\section{Cyclic coverings and their motivic cohomology}\label{3}

In this section we discuss the motivic cohomology of cyclic coverings. As mentioned above, cyclic coverings give many examples of topologically contractible smooth affine complex varieties; we refer the reader to \cite[Section 5]{Z} for a nice survey of related results. It follows from the previous discussion that stably $\mathbb{A}^1$-contractible or $\mathbb{A}^1$-contractible smooth affine varieties have the integral motivic cohomology of the base field. In view of Conjecture \ref{Conjecture} it is therefore of interest to study the motivic cohomology of cyclic coverings. We will be focusing on cyclic coverings $\varphi_{s}: Y_{s} \rightarrow X$ as defined in Definition \ref{CyclicCoverings}. Troughout this section, we will use the notation from Definition \ref{CyclicCoverings} and work under the General Assumptions \ref{Assumptions}.

\begin{Rem}
Note that if $\mathcal{O}_{X}(X)$ is a UFD, then Eisenstein's criterion for irreducibility implies that the second assumption in the General Assumptions \ref{Assumptions} holds as soons as $f$ is a prime element in $\mathcal{O}_{X}(X)$.
\end{Rem}

Consider the commutative diagram (of open/closed immersions):

\begin{center}
$\begin{xy}
  \xymatrix{
     Y_s \setminus F \ar[d]_{\varphi_{s}} \ar[r]^{i} & Y_s \ar[d]_{\varphi_{s}}  & F \ar[d]_{\varphi_{s}}\ar[l] \\
     X \setminus F_0 \ar[r]^{i} & X & F_0 \ar[l]}
\end{xy}$
\end{center}

Let $A = \mathcal{O}_X (X)$. Then $Y_s \setminus \varphi_s^{-1}(F_0) = \varphi_s^{-1} (X \setminus F_0) = Y_s \setminus F$ and the morphism on the left-hand side is given by the ring homomorphism
\begin{center}
$A_{f} \rightarrow A_{f}[u]/\langle f - u^s \rangle = A[u]_{u}/\langle f - u^s \rangle$.
\end{center}
This is a standard {\'e}tale ring homomorphism (cf. \cite[\href{https://stacks.math.columbia.edu/tag/00UB}{Tag 00UB}]{stacks-project}); moreover, it is clearly finite (it is an integral ring extension). In particular, the corresponding morphism of schemes is a finite {\'e}tale cover. The induced extension of function fields
\begin{center}
$k(X) \rightarrow k(X)[u]/\langle f - u^s \rangle = k(Y_s)$
\end{center}
is finite Galois and its Galois group is simply given by the group of $s$th roots of unity $\mu_{s} (k) = \{ x \in k~|~a^s =1\} \cong \mathbb{Z}/s\mathbb{Z}$. Altogether,
\begin{center}
$\varphi_s: Y_{s}\setminus F \rightarrow X \setminus F_{0}$
\end{center}
is a finite {\'e}tale Galois cover (cf. \cite[Definition 6.1 and subsequent remarks]{Mi}). Note that there are induced actions of the Galois group $\mu_s (k)$ on $Y_s$ and $Y_s \setminus F$; we call these actions the Galois actions of $\mu_s (k)$ and usually denote them by $\ast$. One checks easily that $X$ (resp. $X \setminus F_0$) is the quotient of $Y_s$ (resp. $Y_s \setminus F$) under the Galois action (but we do not use this fact in this paper). Finally, note that $\varphi_s$ restricts to an isomorphism $F \xrightarrow{\cong} F_0$.\\
We will now prove several statements about finite {\'e}tale Galois covers which, in particular, apply to the morphism $\varphi_{s}: Y_{s}\setminus F \rightarrow X \setminus F_{0}$ above. For this purpose, we fix a finite {\'e}tale Galois cover $\varphi: U \rightarrow V$ between smooth $k$-varieties with Galois group $G$ and of degree $s = |G|$. We will call the corresponding action of the group $G$ on $U$ the Galois action and denote it usually by $\ast$.\\
Recall from Section \ref{Motivic cohomology} that we denote by $Cor_k$ the category of finite correspondences over $k$ (cf. \cite[Lecture 1]{MVW}) and, for any scheme $X$ of finite type over $k$, by $\mathbb{Z}_{tr}(X)$ the representable presheaf with transfers given by $Y \mapsto Cor_k (Y,X)$ (cf. \cite[Definition 2.8 and Exercise 2.11]{MVW}); analogously, for any commutative ring $R$, we denote by $R_{tr}(X)$ the representable presheaf with transfers of $R$-modules, i.e., $R_{tr}(X)(Y) = \mathbb{Z}_{tr}(X)(Y) \otimes_{\mathbb{Z}} R$. We first prove the following lemma on Galois morphisms:

\begin{Lem}\label{Etale}
Let $\varphi:U \rightarrow V$ be a finite {\'e}tale Galois cover between smooth $k$-varieties with Galois group $G$. Then $\varphi$ induces a bijection $Cor_{k} (V,W) \cong {Cor_{k}(U,W)}^{G}$ for any $W \in Sm_k$.
\end{Lem}

\begin{proof}
The lemma follows from the fact that the presheaf with transfers $\mathbb{Z}_{tr}(W)$ is a sheaf in the {\'e}tale topology (cf. \cite[Lemma 6.2]{MVW}): Therefore one has an exact sequence
\begin{center}
$0 \rightarrow {\mathbb{Z}}_{tr}(W)(V) \xrightarrow{\varphi^{\ast}} {\mathbb{Z}}_{tr}(W)(U) \xrightarrow{\pi_{1}^{\ast}-\pi_{2}^{\ast}} {\mathbb{Z}}_{tr}(W)(U \times_V U)$.
\end{center}
But for a finite {\'e}tale Galois cover as $\varphi$ the induced morphism $U \times G \rightarrow U \times_V U, (u,g) \mapsto (u, u \ast g)$, where $U \times G$ denotes the disjoint union $\sqcup_{g \in G} U_g$ of copies of $U$, is an isomorphism. So, up to this isomorphism, one obtains an exact sequence
\begin{center}
$0 \rightarrow {\mathbb{Z}}_{tr}(W)(V) \xrightarrow{\varphi^{\ast}} {\mathbb{Z}}_{tr}(W)(U) \xrightarrow{{(id^{\ast}-g^{\ast})}_{g \in G}} \bigoplus_{g \in G} {\mathbb{Z}}_{tr}(W)(U_g)$,
\end{center}
which identifies $Cor (V,W)$ precisely with ${Cor(U,W)}^{G}$.
\end{proof}

For $g \in G$, we will denote the automorphism $y \mapsto y \ast g$ of $U$ just by $g$.

\begin{Kor}\label{Existence}
There is a finite correspondence $\mu : V \rightarrow U$ such that $\mu \circ \varphi = \sigma := \sum_{g \in G} g$. Furthermore, one has $\varphi \circ \mu = s \cdot id_{X \setminus F_{0}}$.
\end{Kor}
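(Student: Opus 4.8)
The plan is to extract the correspondence $\mu$ directly from the identification in Lemma \ref{Etale}. Apply that lemma with $U = Y_s \setminus F$, $V = X \setminus F_0$, $\varphi = \varphi_s$, $G = \mu_s(k)$, and $W = Y_s \setminus F$ itself (all of these are smooth $k$-varieties by the General Assumptions, since $Y_s$, $X$, $F_0 \cong F$ are smooth and open/closed subsets of smooth varieties are smooth). The lemma gives a bijection $Cor_k(X \setminus F_0, Y_s \setminus F) \cong Cor_k(Y_s \setminus F, Y_s \setminus F)^{\mu_s(k)}$, where the action on the right is by precomposition with the Galois automorphisms $g$. So I first observe that the element $\sigma = \sum_{g \in \mu_s(k)} g \in Cor_k(Y_s \setminus F, Y_s \setminus F)$ is fixed by this action: precomposing $\sigma$ with any $h \in \mu_s(k)$ permutes the summands (since $g \mapsto g \circ h$ is a bijection of the group), hence $\sigma \circ h = \sigma$. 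Therefore $\sigma$ lies in the fixed subgroup and, by the bijection, there is a unique $\mu \in Cor_k(X \setminus F_0, Y_s \setminus F)$ with $\mu \circ \varphi_s = \sigma$. This is exactly the first assertion; note this uses the explicit description of the bijection in Lemma \ref{Etale}, namely that it is induced by precomposition with $\varphi_s$ (i.e., $\alpha \mapsto \alpha \circ \varphi_s$ on the level of correspondences), so that $\mu$ is characterized precisely by $\mu \circ \varphi_s = \sigma$.

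For the second assertion $\varphi_s \circ \mu = s \cdot \mathrm{id}_{X \setminus F_0}$, I would compute $(\varphi_s \circ \mu) \circ \varphi_s = \varphi_s \circ (\mu \circ \varphi_s) = \varphi_s \circ \sigma = \varphi_s \circ \bigl(\sum_{g} g\bigr) = \sum_{g} (\varphi_s \circ g)$. Now each Galois automorphism $g$ of $Y_s \setminus F$ satisfies $\varphi_s \circ g = \varphi_s$ (the Galois action is by automorphisms over $X \setminus F_0$, i.e., it lives in the fibers of $\varphi_s$; this is immediate from the ring-level description, where $g$ sends $u \mapsto \zeta u$ for a root of unity $\zeta$ and fixes the image of $A_f$). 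Hence $\sum_g (\varphi_s \circ g) = s \cdot \varphi_s = (s \cdot \mathrm{id}_{X \setminus F_0}) \circ \varphi_s$. Thus $(\varphi_s \circ \mu) \circ \varphi_s = (s \cdot \mathrm{id}_{X \setminus F_0}) \circ \varphi_s$, and since precomposition with $\varphi_s$ is injective on $Cor_k(X \setminus F_0, X \setminus F_0)$ — this is the injectivity of $\varphi_s^\ast$ in the exact sequence of Lemma \ref{Etale} applied with $W = X \setminus F_0$ — we conclude $\varphi_s \circ \mu = s \cdot \mathrm{id}_{X \setminus F_0}$.

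I expect the only subtlety to be bookkeeping about \emph{which} variance the bijection of Lemma \ref{Etale} has and how it interacts with composition: the lemma as stated produces $Cor_k(V,W) \cong Cor_k(U,W)^G$ via $\varphi^\ast$ = precomposition with $\varphi$ on the \emph{source} variable, so one must be careful that $\sigma$ and $\varphi_s$ are being composed in the correct order and that $\mu$ has source $X \setminus F_0$ and target $Y_s \setminus F$ as claimed. Everything else is formal manipulation in the additive category $Cor_k$ together with the two elementary facts that $\varphi_s$ is $\mu_s(k)$-invariant and that the Galois action permutes itself regularly. No intersection-theoretic computation is needed beyond what is already packaged into Lemma \ref{Etale}.
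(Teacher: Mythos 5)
Your proposal is correct and follows essentially the same route as the paper: both apply Lemma \ref{Etale} with $W = Y_s \setminus F$ to produce $\mu$ from the $G$-invariant element $\sigma$, and both prove the second identity by reducing it, via the injectivity of precomposition with $\varphi_s$ (Lemma \ref{Etale} with $W = X \setminus F_0$), to the relation $\varphi_s \circ \mu \circ \varphi_s = s \cdot \varphi_s$, which follows from $\mu \circ \varphi_s = \sigma$ and $\varphi_s \circ g = \varphi_s$. You merely spell out a few details (the $G$-invariance of $\sigma$ and the final computation) that the paper leaves implicit.
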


\begin{proof}
The first statement follows from the previous lemma for $\varphi$ and $W = U$. For the second statement, first apply the previous lemma to $\varphi$ and $W = V$ in order to get the identification $Cor_{k} (V,V) \cong {Cor_{k}(U,V)}^{G}$ induced by $\varphi$. So it follows in particular that it suffices to show that $\varphi \circ \mu \circ \varphi = s \cdot \varphi$ for the second statement; but this follows directly from the first statement.
\end{proof}

\begin{Rem}\label{graph}
It is possible to find an explicit finite correspondence $\mu$ as in the proof of the preceeding corollary. In fact, let $U,V$ smooth affine $k$-varieties of the same dimension and $\varphi: U \rightarrow V$ be a finite surjective morphism of degree $d \geq 1$. Then consider the finite correspondence $\mu$ from $V$ to $U$ represented by the transpose $\Gamma_{\varphi}^{t}$ of the graph of $\varphi$. Then the intersection product $(\Gamma_{\varphi}^{t} \times_k V) \cdot (V \times_k \Gamma_{\varphi})$ is just given by the intersection $T$ of $\Gamma_{\varphi}^{t} \times_k V$ and $V \times_k \Gamma_{\varphi}$ in $V \times_k U \times_k V$, which is isomorphic to $U$ and hence irreducible (and has intersection multiplicity $1$ by \cite[\href{https://stacks.math.columbia.edu/tag/0B1I}{Tag 0B1I}]{stacks-project}). The image of this intersection $T$ under the morphism $p: V \times_k U \times_k V \rightarrow V \times_k V$ is isomorphic to the diagonal and hence isomorphic to $V$; the restriction of $p$ to $T$ then corresponds precisely to $\varphi$. It follows then from the definitions of the pushforward and of the composition law in $Cor_k$ that $\varphi \circ \mu = d \cdot id_{V}$ in $Cor_k$.\\
Secondly, assume now that $U,V$ are smooth affine $k$-varieties of the same dimension and $\varphi: U \rightarrow V$ is a finite {\'e}tale Galois cover of degree $d \geq 1$ with Galois group $G$. We claim that $\mu \circ \varphi = \sum_{g \in G} g$ in $Cor_k$, where $\mu$ is defined as in the previous paragraph: Note that the intersection $S$ of $\Gamma_{\varphi} \times_k U$ and $U \times_k \Gamma_{\varphi}^{t}$ in $U \times_k V \times_k U$ is canonically isomorphic to $U \times_V U$ and hence canonically isomorphic to the disjoint union $U \times G = \sqcup_{g \in G} U_g$ of copies of $U$ via the isomorphism $U \times G \rightarrow U \times_V U, (u,g) \mapsto (u, u \ast g)$ (as in the proof of Lemma \ref{Etale}). It follows that the intersection product $(\Gamma_{\varphi} \times_k U) \cdot (U \times_k \Gamma_{\varphi}^{t})$ is just given by the sum of these irreducible components and the pushforward along $p: U \times_k V \times_k U \rightarrow U \times_k U$ is just the sum $\sum_{g \in G} \Gamma_{g}$ of the graphs of all the automorphism of $Y$ in $G$. As any graph $\Gamma_g$ corresponds exactly to the automorphism $g$ in $Cor_k$, this proves the claim.\\
In conclusion, as $\varphi_{s}: Y_{s} \setminus F \rightarrow X \setminus F$ is a finite {\'e}tale Galois cover with Galois group $\mu_s (k)$, it follows from the previous two paragraphs that one can define $\mu$ as the finite correspondence from $X \setminus F_{0}$ to $Y_{s} \setminus F$ represented by the transpose $\Gamma_{\varphi_{s}}^{t}$ of the graph of $\varphi_s$. Then this finite correspondence satisfies the properties of $\mu$ in the previous corollary. In fact, as $\varphi_{s}: Y_{s} \rightarrow X$ is at least also finite, the first paragraph above shows that the finite correspondence $\overline{\mu}$ from $X$ to $Y_{s}$ represented by the transpose $\Gamma_{\varphi_{s}}^{t}$ of the graph of $\varphi_s$ satisfies $\varphi_s \circ \overline{\mu} = s \cdot id_{X}$.
\end{Rem}

Recall that, for $Y \in Sm_k$, we denote by $H^{p,q}(Y,R)$ its motivic cohomology in bidegree $(p,q) \in \mathbb{Z}^2$ with coefficients in a commutative ring $R$. If there is a finite group $G$ acting on $Y$, then we denote by ${H^{p,q}(Y,R)}^{G}$ the subgroup of elements in $H^{p,q}(Y,R)$ fixed by the action of $G$.

\begin{Kor}\label{FixedSubgroup}
Let $(p,q) \in \mathbb{Z}^2$. If $s \in R^{\times}$, then $\varphi$ induces isomorphisms ${H^{p,q}(U,R)}^{G} \cong H^{p,q}(V,R)$. The analogous statement holds for any presheaf of $R$-modules with transfers.
\end{Kor}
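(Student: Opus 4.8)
The plan is to deduce this from Corollary \ref{Existence} together with Lemma \ref{Etale}, exploiting the existence of the finite correspondence $\mu: X\setminus F_0 \rightarrow Y_s\setminus F$ with $\mu\circ\varphi_s = \sigma = \sum_{g\in\mu_s(k)} g$ and $\varphi_s\circ\mu = s\cdot\mathrm{id}_{X\setminus F_0}$. Since motivic cohomology (and any presheaf of $R$-modules with transfers) is contravariantly functorial on $Cor_k$, these identities of correspondences induce corresponding identities of $R$-linear maps on $H^{p,q}(-,R)$. First I would record that $\varphi_s^\ast: H^{p,q}(X\setminus F_0,R) \rightarrow H^{p,q}(Y_s\setminus F,R)$ lands in the fixed subgroup ${H^{p,q}(Y_s\setminus F,R)}^{\mu_s(k)}$: for $g\in\mu_s(k)$ one has $g\circ\varphi_s = \varphi_s$ as morphisms (hence as correspondences), so $g^\ast\circ\varphi_s^\ast = \varphi_s^\ast$, and thus the image of $\varphi_s^\ast$ is pointwise fixed. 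So $\varphi_s^\ast$ factors through a map $\bar\varphi_s^\ast: H^{p,q}(X\setminus F_0,R)\rightarrow {H^{p,q}(Y_s\setminus F,R)}^{\mu_s(k)}$, and it is this factored map that I claim is an isomorphism.

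For injectivity, apply functoriality to $\varphi_s\circ\mu = s\cdot\mathrm{id}_{X\setminus F_0}$, giving $\mu^\ast\circ\varphi_s^\ast = s\cdot\mathrm{id}$ on $H^{p,q}(X\setminus F_0,R)$; since $s\in R^\times$, the map $\varphi_s^\ast$ is split injective, so $\bar\varphi_s^\ast$ is injective. For surjectivity, take $\alpha\in{H^{p,q}(Y_s\setminus F,R)}^{\mu_s(k)}$ and consider $\beta := \mu^\ast(\alpha)\in H^{p,q}(X\setminus F_0,R)$. Then $\varphi_s^\ast(\beta) = \varphi_s^\ast\mu^\ast(\alpha) = (\mu\circ\varphi_s)^\ast(\alpha) = \sigma^\ast(\alpha) = \sum_{g\in\mu_s(k)} g^\ast(\alpha) = s\cdot\alpha$, where the last equality uses that $\alpha$ is fixed by every $g^\ast$. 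Hence $\varphi_s^\ast(s^{-1}\beta) = \alpha$, proving $\bar\varphi_s^\ast$ is surjective. Combining the two gives the desired isomorphism.

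The only point requiring a little care — and what I expect to be the main (mild) obstacle — is making precise the compatibility between the abstract Galois action $\ast$ on $Y_s\setminus F$ and the induced action on $H^{p,q}(Y_s\setminus F,R)$: one must check that the action used to define ${H^{p,q}(Y_s\setminus F,R)}^{\mu_s(k)}$ is genuinely $g\mapsto g^\ast$ (contravariant functoriality applied to the automorphisms $g$), and that $\sum_g g$ as a correspondence really does induce $\sum_g g^\ast$ on cohomology, i.e. that functoriality is additive on $Cor_k$ — which holds since motivic cohomology is an additive functor on $Cor_k$ (cf. \cite[Example 13.11]{MVW}). Once this bookkeeping is in place, the argument above is purely formal. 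I would write the proof uniformly for an arbitrary presheaf $\mathcal{F}$ of $R$-modules with transfers by replacing $H^{p,q}(-,R)$ with $\mathcal{F}(-)$ throughout; nothing changes since the only inputs are additivity of $\mathcal{F}$ on $Cor_k$ and invertibility of $s$ in $R$.
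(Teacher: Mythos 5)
Your proposal is correct and takes essentially the same route as the paper: the paper's proof is precisely this formal argument from Corollary \ref{Existence} together with the fact that motivic cohomology is a presheaf of $R$-modules with transfers (via representability in $\textbf{DM}^{eff,-}_{Nis}(k,R)$), including the same observation that $\varphi_s^{\ast}$ lands in the fixed subgroup. (One small slip: the identity of morphisms should read $\varphi_s \circ g = \varphi_s$ rather than $g \circ \varphi_s = \varphi_s$, which does not typecheck; the induced identity $g^{\ast} \circ \varphi_s^{\ast} = \varphi_s^{\ast}$ that you actually use is the correct one.)
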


\begin{proof}
The statement follows formally from the Corollary \ref{Existence} and representability of motivic cohomology in $\textbf{DM}_{Nis}^{eff,-}(k,R)$ (cf. \cite[Proposition 14.16]{MVW}), which implies that motivic cohomology groups are presheaves of $R$-modules with transfers: Note that ${\varphi}^{\ast}$ automatically maps into the group ${H^{p,q}(U,R)}^{G}$ as $\varphi \circ g = \varphi$ and hence $g^{\ast} \circ {\varphi}^{\ast} = {\varphi}^{\ast}$. If we let $\mu: V \rightarrow U$ be as in Corollary \ref{Existence}, then the composite
\begin{center}
$H^{p,q}(V,R) \xrightarrow{\varphi^{\ast}} {H^{p,q}(U,R)}^{G} \hookrightarrow H^{p,q}(U,R) \xrightarrow{\mu^{\ast}} H^{p,q}(V,R)$
\end{center}
is multiplication by $s$ and hence, as $s \in R^{\times}$, an isomorphism; in particular, $\varphi^{\ast}: H^{p,q}(V,R) \rightarrow {H^{p,q}(U,R)}^{G}$ is injective. Similarly, the composite
\begin{center}
${H^{p,q}(U,R)}^{G} \hookrightarrow H^{p,q}(U,R) \xrightarrow{\mu^{\ast}} H^{p,q}(V,R) \xrightarrow{\varphi^{\ast}} {H^{p,q}(U,R)}^{G}$
\end{center}
is $\sum_{g \in G} g^{\ast}$, which is multiplication by $s$ on ${H^{p,q}(U,R)}^{G}$ and hence an isomorphism as well because $s \in R^{\times}$; in particular $\varphi^{\ast}$ is also surjective and hence an isomorphism. The reasoning for any presheaf of $R$-modules with transfers is completely analogous.
\end{proof}

\begin{Kor}
Let $(p,q) \in \mathbb{Z}^2$. If $s \in R^{\times}$, then there is a split short exact sequence of $R$-modules of the form
\begin{center}
$0 \rightarrow \ker(\mu^{\ast}) \rightarrow {H^{p,q}(U,R)} \xrightarrow{\mu^{\ast}} H^{p,q}(V,R) \rightarrow 0$
\end{center}
and, in particular, an isomorphism ${H^{p,q}(U,R)} \cong H^{p,q}(V,R) \oplus \ker (\mu^{\ast})$. The analogous statements hold for any presheaf of $R$-modules with transfers.
\end{Kor}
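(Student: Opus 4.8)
The plan is to deduce this Corollary directly from the preceding Corollary \ref{FixedSubgroup} together with the finite correspondence $\mu : X \setminus F_0 \rightarrow Y_s \setminus F$ of Corollary \ref{Existence}. Since motivic cohomology with $R$-coefficients is representable in $\textbf{DM}^{eff,-}_{Nis}(k,R)$ and hence carries transfers, the correspondences $\varphi_s$ and $\mu$ induce $R$-linear maps $\varphi_s^{\ast} : H^{p,q}(X \setminus F_0,R) \rightarrow H^{p,q}(Y_s \setminus F,R)$ and $\mu^{\ast} : H^{p,q}(Y_s \setminus F,R) \rightarrow H^{p,q}(X \setminus F_0,R)$. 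First I would record that $\varphi_s \circ \mu = s \cdot id_{X \setminus F_0}$ in $Cor_k$, so by functoriality $\mu^{\ast} \circ \varphi_s^{\ast} = s \cdot id$ on $H^{p,q}(X \setminus F_0,R)$. Because $s \in R^{\times}$, the map $\tfrac{1}{s}\varphi_s^{\ast}$ is a section of $\mu^{\ast}$; in particular $\mu^{\ast}$ is surjective and the sequence
\begin{center}
$0 \rightarrow \ker(\mu^{\ast}) \rightarrow H^{p,q}(Y_s \setminus F,R) \xrightarrow{\mu^{\ast}} H^{p,q}(X \setminus F_0,R) \rightarrow 0$
\end{center}
is exact and split, which immediately yields the direct sum decomposition $H^{p,q}(Y_s \setminus F,R) \cong H^{p,q}(X \setminus F_0,R) \oplus \ker(\mu^{\ast})$.

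The one point that requires a little more than formal nonsense is the identification of the summand $\mathrm{im}(\tfrac{1}{s}\varphi_s^{\ast}) \cong H^{p,q}(X\setminus F_0,R)$ with the subgroup singled out in Corollary \ref{FixedSubgroup}; this is not strictly needed for the statement, but it is worth remarking that under the splitting the copy of $H^{p,q}(X\setminus F_0,R)$ sitting inside $H^{p,q}(Y_s\setminus F,R)$ is exactly the fixed subgroup ${H^{p,q}(Y_s\setminus F,R)}^{\mu_s(k)}$, since $\varphi_s \circ g = \varphi_s$ forces $g^{\ast}\varphi_s^{\ast} = \varphi_s^{\ast}$ for every $g \in \mu_s(k)$, and conversely any fixed class $x$ satisfies $\sigma^{\ast}x = \sum_g g^{\ast}x = s\,x = \varphi_s^{\ast}\mu^{\ast}x$ by Corollary \ref{Existence}, so $x = \varphi_s^{\ast}(\tfrac{1}{s}\mu^{\ast}x) \in \mathrm{im}(\varphi_s^{\ast})$. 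Hence $\ker(\mu^{\ast})$ is the complementary summand.

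Finally I would note that the entire argument only used that $H^{p,q}(-,R)$ is an additive functor $Cor_k^{op} \rightarrow \textit{R-mod}$, the relation $\varphi_s \circ \mu = s\cdot id$, and invertibility of $s$ in $R$; therefore the same reasoning applies verbatim with $H^{p,q}(-,R)$ replaced by any presheaf of $R$-modules with transfers, giving the claimed analogous statements. I do not anticipate a genuine obstacle here: the result is essentially a transfer-argument averaging over the Galois group, and the only care needed is to invoke representability in $\textbf{DM}^{eff,-}_{Nis}(k,R)$ (cf. \cite[Proposition 14.16]{MVW}) to legitimize applying motivic cohomology to the correspondence $\mu$ rather than to an honest morphism of schemes.
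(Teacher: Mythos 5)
Your proposal is correct and matches the paper's (implicit) argument: the corollary is a formal consequence of Corollary \ref{Existence}, using the relation $\varphi_s \circ \mu = s \cdot id_{X \setminus F_0}$, contravariant functoriality on $Cor_k$ (legitimized by representability in $\textbf{DM}^{eff,-}_{Nis}(k,R)$), and invertibility of $s$ to produce the section $\tfrac{1}{s}\varphi_s^{\ast}$ of $\mu^{\ast}$. Your additional identification of the split summand $\mathrm{im}(\varphi_s^{\ast})$ with the fixed subgroup ${H^{p,q}(Y_s \setminus F,R)}^{\mu_s(k)}$ is also correct and consistent with Corollary \ref{FixedSubgroup}.
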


We say that the Galois action of $G$ on $U$ is cohomologically trivial on $H^{p,q}(U, R)$ if ${H^{p,q}(U, R)}^{G} = {H^{p,q}(U, R)}$, i.e., any $g \in G$ acts as identity on the group $H^{p,q}(U,R)$. Moreover, we say that the Galois action is motivically trivial if $g$ is the identity on $R_{tr}(U)$ in $\textbf{DM}_{Nis}^{eff,-}(k,R)$ for all $g \in G$. By representability of motivic cohomology in $\textbf{DM}_{Nis}^{eff,-}(k,R)$ (cf. \cite[Proposition 14.16]{MVW}), it follows that motivic triviality is stronger than cohomological triviality. The following statement is a direct consequence of Corollary \ref{FixedSubgroup}:

\begin{Kor}\label{Iso1}
Let $(p,q) \in \mathbb{Z}^2$ and assume $s \in R^{\times}$. Then the Galois action of $G$ on $U$ is cohomologically trivial on $H^{p,q}(U, R)$ if and only if $\varphi$ induces isomorphisms ${H^{p,q}(U,R)} \cong H^{p,q}(V,R)$. The analogous statement holds for any presheaf of $R$-modules with transfers.
\end{Kor}

The following statement is a consequence of Corollary \ref{Existence}:

\begin{Kor}
If $s \in R^{\times}$ and the Galois action of $G$ on $U$ is motivically trivial, then $\varphi$ induces isomorphisms $R_{tr}(U) \cong R_{tr}(V)$ in $\textbf{DM}_{Nis}^{eff,-}(k,R)$.
\end{Kor}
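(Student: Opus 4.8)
The plan is to deduce this purely formally from Corollary \ref{Existence} together with the definition of motivic triviality, by writing down an explicit two-sided inverse of $R_{tr}(\varphi_s)$ in $\textbf{DM}_{Nis}^{eff,-}(k,R)$. First I would recall that a finite correspondence $\alpha \in Cor_k(Z,Z')$ induces a morphism $R_{tr}(\alpha)\colon R_{tr}(Z) \to R_{tr}(Z')$ of presheaves with transfers of $R$-modules, that this assignment respects composition in $Cor_k$, and that it descends to a functor into $\textbf{DM}_{Nis}^{eff,-}(k,R)$. In particular, the finite correspondence $\mu \colon X \setminus F_0 \to Y_s \setminus F$ from Corollary \ref{Existence} yields a morphism $R_{tr}(\mu)\colon R_{tr}(X \setminus F_0) \to R_{tr}(Y_s \setminus F)$ in $\textbf{DM}_{Nis}^{eff,-}(k,R)$, going in the direction opposite to $R_{tr}(\varphi_s)$.

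Next I would combine the two identities from Corollary \ref{Existence}. Applying the functor $R_{tr}$ to $\varphi_s \circ \mu = s \cdot id_{X \setminus F_0}$ gives $R_{tr}(\varphi_s) \circ R_{tr}(\mu) = s \cdot id_{R_{tr}(X \setminus F_0)}$. Applying it to $\mu \circ \varphi_s = \sigma = \sum_{g \in \mu_s(k)} g$ gives $R_{tr}(\mu) \circ R_{tr}(\varphi_s) = \sum_{g \in \mu_s(k)} R_{tr}(g)$ in $\textbf{DM}_{Nis}^{eff,-}(k,R)$. By the hypothesis that the Galois action is motivically trivial, each $R_{tr}(g)$ equals $id_{R_{tr}(Y_s \setminus F)}$ in $\textbf{DM}_{Nis}^{eff,-}(k,R)$, so this sum equals $s \cdot id_{R_{tr}(Y_s \setminus F)}$. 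Since $s \in R^{\times}$, the morphism $\tfrac{1}{s} R_{tr}(\mu)$ is then a two-sided inverse of $R_{tr}(\varphi_s)$, which is therefore an isomorphism; if one prefers, one can phrase the identical argument with the motives $M(-)$, since $M(Z)$ is by definition the class of $R_{tr}(Z)$.

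The argument is essentially formal, so there is no serious obstacle: the only points that require care are bookkeeping ones, namely tracking the variance so that $\mu$ really induces a morphism opposite in direction to $\varphi_s$ after applying $R_{tr}$, and invoking that the assignment $Z \mapsto R_{tr}(Z)$, $\alpha \mapsto R_{tr}(\alpha)$ is functorial and descends to $\textbf{DM}_{Nis}^{eff,-}(k,R)$, so that the equalities of correspondences established in Corollary \ref{Existence} still hold after passing to this triangulated category. Everything else is the elementary observation that a morphism admitting, up to the unit $s$, both a left and a right inverse is an isomorphism.
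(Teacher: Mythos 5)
Your argument is correct and is exactly the intended one: the paper states this corollary without proof as a direct consequence of Corollary \ref{Existence}, and your formal computation (applying the covariant functor $Z \mapsto R_{tr}(Z)$ to the identities $\varphi_s \circ \mu = s\cdot id$ and $\mu \circ \varphi_s = \sum_{g} g$, using motivic triviality to replace each $g$ by the identity, and inverting $s \in R^{\times}$ to get the two-sided inverse $\tfrac{1}{s}R_{tr}(\mu)$) is precisely the argument being alluded to.
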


The following corollary is a consequence of Corollary \ref{Iso1}:

\begin{Kor}
Assume $s \in R^{\times}$ and let $(p,q) \in \mathbb{Z}^2$. Then $U \rightarrow Spec(k)$ induces an isomorphism on the motivic cohomology groups with coefficients in $R$ in bidegree $(p,q)$ if and only if $V \rightarrow Spec(k)$ induces an isomorphism on the motivic cohomology groups with coefficients in $R$ in bidegree $(p,q)$ and the Galois action of $G$ on $U$ is cohomologically trivial on $H^{p,q}(U,R)$.
\end{Kor}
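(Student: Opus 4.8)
The plan is to exploit the commutative triangle formed by $\varphi_s$ and the structure morphisms to $Spec(k)$, together with the fact that $\varphi_s^{\ast}$ is always a split monomorphism on motivic cohomology once $s \in R^{\times}$. Concretely, the structure morphism $Y_s \setminus F \rightarrow Spec(k)$ factors as $Y_s \setminus F \xrightarrow{\varphi_s} X \setminus F_0 \rightarrow Spec(k)$, since all morphisms in sight are morphisms of $k$-varieties. Applying the contravariant functor $H^{p,q}(-,R)$ produces a commutative triangle
\[
H^{p,q}(Spec(k),R) \xrightarrow{\ a\ } H^{p,q}(X\setminus F_0,R) \xrightarrow{\ \varphi_s^{\ast}\ } H^{p,q}(Y_s\setminus F,R),
\]
in which the composite $\varphi_s^{\ast} \circ a$ is exactly the map induced by the structure morphism of $Y_s\setminus F$. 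By Corollary \ref{FixedSubgroup} (which uses $s\in R^{\times}$), the map $\varphi_s^{\ast}$ is an isomorphism onto the fixed subgroup ${H^{p,q}(Y_s\setminus F,R)}^{\mu_s(k)}$; in particular it is injective, indeed split injective with retraction $\tfrac{1}{s}\mu^{\ast}$ by Corollary \ref{Existence}.

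Then I would run a two-out-of-three argument along this triangle. For the ``if'' direction, suppose that $X\setminus F_0 \rightarrow Spec(k)$ induces an isomorphism on $H^{p,q}(-,R)$, i.e. that $a$ is an isomorphism, and that the Galois action is cohomologically trivial on $H^{p,q}(Y_s\setminus F,R)$; by Corollary \ref{Iso1} the latter is equivalent to $\varphi_s^{\ast}$ being an isomorphism, so the composite $\varphi_s^{\ast} \circ a$ is an isomorphism, which says precisely that $Y_s\setminus F \rightarrow Spec(k)$ induces an isomorphism on $H^{p,q}(-,R)$. For the ``only if'' direction, suppose $\varphi_s^{\ast} \circ a$ is an isomorphism. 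Then it is surjective, hence so is $\varphi_s^{\ast}$; combined with the injectivity recorded above, $\varphi_s^{\ast}$ is an isomorphism, so by Corollary \ref{Iso1} the Galois action is cohomologically trivial on $H^{p,q}(Y_s\setminus F,R)$, and moreover $a = (\varphi_s^{\ast})^{-1} \circ (\varphi_s^{\ast} \circ a)$ is an isomorphism, i.e. $X\setminus F_0 \rightarrow Spec(k)$ induces an isomorphism on $H^{p,q}(-,R)$. This yields both implications.

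The only place where care is needed is precisely the ``only if'' direction: from a composite of two homomorphisms being an isomorphism one cannot in general conclude that both factors are isomorphisms, so the argument genuinely relies on the extra input that $\varphi_s^{\ast}$ is injective whenever $s$ is invertible in $R$, which is exactly what Corollary \ref{FixedSubgroup} supplies. Everything else is formal bookkeeping with the triangle and the cited corollaries.
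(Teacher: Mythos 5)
Your argument is correct and is essentially the paper's own: the paper states this corollary as a direct consequence of Corollary \ref{Iso1}, and your two-out-of-three argument along the factorization $Y_s\setminus F \to X\setminus F_0 \to Spec(k)$, using the (split) injectivity of $\varphi_s^{\ast}$ from Corollaries \ref{Existence} and \ref{FixedSubgroup} to handle the ``only if'' direction, is exactly the intended reasoning, just written out in full.
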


\begin{Rem}\label{AlwaysInjective}
Let $R$ be a commutative ring with $s \in R^{\times}$ and let $(p,q) \in \mathbb{Z}^2$. Then it follows in particular from the previous corollaries that the structure morphism $X \setminus F_{0} \rightarrow Spec(k)$ induces an isomorphism on motivic cohomology with coefficients in $R$ in bidegree $(p,q)$ whenever the structure morphism $Y_{s} \setminus F \rightarrow Spec(k)$ does. By using Remark \ref{graph} and, in particular, the finite correspondence $\overline{\mu}$, it follows also that the structure morphism $X \rightarrow Spec(k)$ induces an isomorphism on motivic cohomology with coefficients in $R$ in bidegree $(p,q)$ whenever the structure morphism $Y_{s} \rightarrow Spec(k)$ does. In fact, it follows from Remark \ref{graph} that $\varphi_{s}^{\ast}: H^{p,q}(X,R) \rightarrow H^{p,q}(Y_{s},R)$ is always injective.
\end{Rem}



We now focus on the morphism $\varphi_{s}: Y_{s} \setminus F \rightarrow X \setminus F_{0}$. In view of Corollary \ref{Iso1}, it is now of interest to investigate under which circumstances the Galois action of $\mu_s (k)$ on $H^{p,q}(Y_{s}\setminus F,R)$ is indeed trivial. We will say that an action of $\mu_s (k)$ on $Y_s \setminus F$ extends to a morphism $h:\mathbb{A}^1_k \times_k (Y_s \setminus F) \rightarrow Y_s \setminus F$ (resp. $h:\mathbb{G}_{m,k} \times_k (Y_s \setminus F) \rightarrow Y_s \setminus F$) if there is a morphism $h:\mathbb{A}^1_k \times_k (Y_s \setminus F) \rightarrow Y_s \setminus F$ (resp. $h:\mathbb{G}_{m,k} \times_k (Y_s \setminus F) \rightarrow Y_s \setminus F$) which recovers the given action of $\mu_s (k)$ on $Y_{s} \setminus F$ when evaluated at the $k$-points of $\mathbb{A}^1_k$ (resp. $\mathbb{G}_{m,k}$) corresponding to the elements of $\mu_s (k)$.

\begin{Lem}\label{Extension}
An action of $\mu_s (k)$ on $Y_s \setminus F$ is motivically trivial (and hence cohomologically trivial) if it extends to a morphism $h:\mathbb{A}^1_k \times_k (Y_s \setminus F) \rightarrow Y_s \setminus F$.
\end{Lem}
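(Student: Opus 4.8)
The idea is standard: a morphism $h : \mathbb{A}^1_k \times_k (Y_s \setminus F) \to Y_s \setminus F$ that restricts to the elements $g \in \mu_s(k)$ at the corresponding $k$-points $\{g\} \in \mathbb{A}^1_k(k)$ should be viewed as an $\mathbb{A}^1$-homotopy between any two of these automorphisms, and in particular between each $g$ and the identity (assuming $h$ restricted to $1 \in \mathbb{A}^1_k(k)$ is the identity — which we may arrange, or otherwise we get a homotopy between $g$ and $h_1$ and between $\mathrm{id}$ and $h_1$, hence between $g$ and $\mathrm{id}$). The subtlety is that motivic triviality is a statement in $\textbf{DM}_{Nis}^{eff,-}(k,R)$, so I need to promote this scheme-theoretic $\mathbb{A}^1$-homotopy to an equality of the induced maps on $R_{tr}(Y_s \setminus F)$ in $\textbf{DM}_{Nis}^{eff,-}(k,R)$.

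First I would recall that the inclusions of the two endpoints $i_0, i_1 : Y_s \setminus F \rightrightarrows \mathbb{A}^1_k \times_k (Y_s \setminus F)$ induce the \emph{same} morphism $R_{tr}(\mathbb{A}^1_k \times_k (Y_s \setminus F)) \to R_{tr}(Y_s \setminus F)$ in $\textbf{DM}_{Nis}^{eff,-}(k,R)$, since $\textbf{DM}_{Nis}^{eff,-}(k,R)$ is obtained from $D^{-}Sh_{Nis}(Cor_k,R)$ by inverting $\mathbb{A}^1$-weak equivalences and the projection $\mathbb{A}^1_k \times_k (Y_s \setminus F) \to Y_s \setminus F$ becomes an isomorphism there with both $i_0$ and $i_1$ as sections (cf. the discussion of $\textbf{DM}_{Nis}^{eff,-}(k,R)$ and homotopy invariance in the excerpt, \cite[Definition 14.1]{MVW} and the surrounding material). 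Composing with $R_{tr}(h) : R_{tr}(\mathbb{A}^1_k \times_k (Y_s \setminus F)) \to R_{tr}(Y_s \setminus F)$, I get that $R_{tr}(h \circ i_0) = R_{tr}(h \circ i_1)$ in $\textbf{DM}_{Nis}^{eff,-}(k,R)$. More generally, for any two $k$-points $a, b \in \mathbb{A}^1_k(k)$, the corresponding sections $i_a, i_b$ of the projection also agree in $\textbf{DM}_{Nis}^{eff,-}(k,R)$ (the projection is an isomorphism there, so all its sections coincide), hence $R_{tr}(h \circ i_a) = R_{tr}(h \circ i_b)$.

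Now I apply this with $a$ the point corresponding to an arbitrary $g \in \mu_s(k)$ and $b$ the point corresponding to $1 \in \mu_s(k) \subset \mathbb{A}^1_k(k)$: by hypothesis $h \circ i_a = g$ and $h \circ i_b = \mathrm{id}_{Y_s \setminus F}$ as morphisms of schemes (the second because $1 \in \mu_s(k)$ acts as the identity, and $h$ recovers the Galois action at these points). Therefore $g = \mathrm{id}$ on $R_{tr}(Y_s \setminus F)$ in $\textbf{DM}_{Nis}^{eff,-}(k,R)$ for every $g \in \mu_s(k)$, which is exactly motivic triviality; cohomological triviality then follows by representability of motivic cohomology in $\textbf{DM}_{Nis}^{eff,-}(k,R)$ (cf. \cite[Proposition 14.16]{MVW}), as already noted in the excerpt. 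The one point requiring a little care is the bookkeeping around the basepoint $1 \in \mu_s(k)$ and the fact that $1 \in \mathbb{A}^1_k(k)$ lies in the image of $\mu_s(k) \hookrightarrow \mathbb{A}^1_k(k)$, so that $h$ is required to restrict to $\mathrm{id}$ there; if one only knows $h$ restricts to the group elements without controlling which point is which, one still concludes by transitivity of the relation $R_{tr}(h \circ i_a) = R_{tr}(h \circ i_b)$ over all pairs of $k$-points. I expect this basepoint bookkeeping, rather than any genuine homotopy-theoretic input, to be the only mild obstacle; the homotopy-invariance step is formal from the construction of $\textbf{DM}_{Nis}^{eff,-}(k,R)$.
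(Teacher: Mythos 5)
Your proposal is correct and is essentially the paper's own argument: the paper likewise observes that each section $(g,\mathrm{id})$ of the projection $\mathbb{A}^1_k \times_k (Y_s \setminus F) \rightarrow Y_s \setminus F$ must agree with $(1,\mathrm{id})$ in $\textbf{DM}_{Nis}^{eff,-}(k,R)$ because the projection becomes an isomorphism there, and then composes with $h$ to conclude $g = \mathrm{id}$. Your extra basepoint bookkeeping is unnecessary (the hypothesis already forces $h$ to restrict to the identity at the point $1 \in \mu_s(k)$, since the identity element acts trivially) but does no harm.
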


\begin{proof}
For any $g \in \mu_s (k) \subset \mathbb{A}^1_k (k)$, we have by assumption a factorization
\begin{center}
$g: (Y_s \setminus F)= k \times_k (Y_s \setminus F) \xrightarrow{(g,id)} \mathbb{A}^1_k \times_k (Y_s \setminus F) \xrightarrow{h} Y_s \setminus F$.
\end{center}
Since any $(g,id)$ defines a section to the projection of $\mathbb{A}^1_k \times_k (Y_s \setminus F)$ onto $Y_s \setminus F$ and this projection is an isomorphism in $\textbf{DM}_{Nis}^{eff,-}(k,R)$, it follows that ${(g,id)} = {(1,id)}$ in $\textbf{DM}_{Nis}^{eff,-}(k,R)$ for any $g \in \mu_s (k)$. Hence $g$ equals the identity in $\textbf{DM}_{Nis}^{eff,-}(k,R)$ for all $g \in \mu_s (k)$.
\end{proof}

Regarding motivic cohomology groups, the reasoning in the proof of Lemma \ref{Extension} also works for $\mathbb{G}_{m,k}$ with an additional hypothesis:

\begin{Lem}\label{Extension-2}
Let $(p,q) \in \mathbb{Z}^2$. An action of $\mu_s (k)$ on $Y_s \setminus F$ is cohomologically trivial on $H^{p,q}(Y_{s} \setminus F,R)$ if it extends to a morphism $h:\mathbb{G}_{m,k} \times_k (Y_s \setminus F) \rightarrow Y_s \setminus F$ and the projection $\pi: \mathbb{G}_{m,k} \times_k (Y_s \setminus F) \rightarrow (Y_s \setminus F)$ induces an isomorphism $\pi^{\ast}: H^{p,q}(Y_{s} \setminus F,R) \xrightarrow{\cong} H^{p,q}(\mathbb{G}_{m,k} \times_k Y_{s} \setminus F,R)$.
\end{Lem}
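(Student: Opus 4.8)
The plan is to mimic the argument in the proof of Lemma~\ref{Extension}, replacing the use of $\mathbb{A}^1$-homotopy invariance by the hypothesis that $\pi^{\ast}$ is an isomorphism. Fix $g \in \mu_s(k)$, regarded as a $k$-point of $\mathbb{G}_{m,k}$, and let $(g,id): Y_s \setminus F \rightarrow \mathbb{G}_{m,k} \times_k (Y_s \setminus F)$ denote the corresponding section of the projection $\pi$. Since the action of $\mu_s(k)$ extends to $h$, the automorphism $g$ of $Y_s \setminus F$ factors as
\begin{center}
$g: (Y_s \setminus F) = Spec(k) \times_k (Y_s \setminus F) \xrightarrow{(g,id)} \mathbb{G}_{m,k} \times_k (Y_s \setminus F) \xrightarrow{h} Y_s \setminus F$;
\end{center}
likewise the identity of $Y_s \setminus F$ factors through $(1,id)$, where $1 \in \mathbb{G}_{m,k}(k)$ is the unit, using that the unit of $\mu_s(k)$ acts trivially so that $h \circ (1,id) = id_{Y_s \setminus F}$. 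All schemes involved lie in $Sm_k$, so these morphisms define finite correspondences via their graphs and the pullbacks compose contravariantly on motivic cohomology.

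Next I would use the section property $\pi \circ (g,id) = id_{Y_s \setminus F}$, which gives $(g,id)^{\ast} \circ \pi^{\ast} = id$ on $H^{p,q}(Y_s \setminus F, R)$. Since $\pi^{\ast}$ is an isomorphism in this bidegree by hypothesis, this forces $(g,id)^{\ast} = (\pi^{\ast})^{-1}$, which is independent of $g$; in particular $(g,id)^{\ast} = (1,id)^{\ast}$ for every $g \in \mu_s(k)$. Applying contravariant functoriality of motivic cohomology to the two factorizations above then yields
\begin{center}
$g^{\ast} = (g,id)^{\ast} \circ h^{\ast} = (1,id)^{\ast} \circ h^{\ast} = (h \circ (1,id))^{\ast} = id$
\end{center}
on $H^{p,q}(Y_s \setminus F, R)$, for every $g \in \mu_s(k)$, which is exactly cohomological triviality of the Galois action in bidegree $(p,q)$.

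I do not expect a serious obstacle; the computation is essentially that of Lemma~\ref{Extension}, and the only delicate point is conceptual. Namely, the projection $\pi$ off $\mathbb{G}_{m,k}$ is \emph{not} an $\mathbb{A}^1$-weak equivalence, so $\pi^{\ast}$ need not be an isomorphism on $R_{tr}(Y_s \setminus F)$ in $\textbf{DM}_{Nis}^{eff,-}(k,R)$, nor on $H^{p,q}$ in every bidegree. This is precisely why one can only conclude cohomological triviality — not motivic triviality — and why the isomorphism hypothesis on $\pi^{\ast}$ has to be imposed by hand; in applications one expects to verify it via the $\mathbb{G}_m$-splitting $H^{p,q}(\mathbb{G}_{m,k} \times_k Z, R) \cong H^{p,q}(Z,R) \oplus H^{p-1,q-1}(Z,R)$ together with a vanishing in the relevant bidegree.
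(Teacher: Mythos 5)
Your argument is correct and is exactly the adaptation the paper has in mind: the paper proves Lemma \ref{Extension-2} only by remarking that the reasoning of Lemma \ref{Extension} carries over, with the hypothesis that $\pi^{\ast}$ is an isomorphism in bidegree $(p,q)$ replacing $\mathbb{A}^1$-invariance, which is precisely what you spell out via the factorization $g = h \circ (g,id)$ and the identity $(g,id)^{\ast} = (\pi^{\ast})^{-1}$ for every section. Your closing remark on why one only gets cohomological (not motivic) triviality also matches the paper's intent.
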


\begin{Rem}\label{GM-Chow}
Since the composite
\begin{center}
$Y_{s}\setminus F \xrightarrow{(id,1)} (Y_{s}\setminus F) \times_k \mathbb{G}_{m,k} \xrightarrow{\pi} Y_{s}\setminus F$
\end{center}
induces the identity and hence an isomorphism on motivic cohomology groups, $\pi^{\ast}$ in Lemma \ref{Extension-2} is always injective. In fact, note that the projection map $\pi: \mathbb{G}_{m,k} \times_k (Y_s \setminus F) \rightarrow (Y_s \setminus F)$ always induces an isomorphism on Chow groups (tensored with any coefficient ring $R$). Indeed, the exact localization sequence
\begin{center}
$CH^{i-1} (Y_{s}\setminus F)\otimes_{\mathbb{Z}}R \rightarrow CH^{i}((Y_{s}\setminus F)\times_{k} \mathbb{A}^{1}_{k})\otimes_{\mathbb{Z}}R \rightarrow CH^{i}((Y_{s}\setminus F)\times_{k} \mathbb{G}_{m,k})\otimes_{\mathbb{Z}}R \rightarrow 0$
\end{center}
and homotopy invariance imply that $\pi^{\ast}: CH^{i}(Y_{s}\setminus F) \rightarrow CH^{i}((Y_{s}\setminus F)\times_{k} \mathbb{G}_{m,k})$ is surjective and hence bijective.
\end{Rem}

\begin{Lem}\label{Extension-Lem}
Let $(p,q) \in \mathbb{Z}^2$. Assume that there is a $\mathbb{G}_{m,k}$-action $\gamma$ on $X$ which makes $f$ a quasi-invariant of weight $d \in \mathbb{Z}$ with respect to $\gamma$ and $\langle d,s \rangle = \mathbb{Z}$. Then the Galois action of $\mu_s (k)$ on $Y_s \setminus F$ is cohomologically trivial on $H^{p,q}(Y_{s} \setminus F,R)$ as soon as the projection $\pi: \mathbb{G}_{m,k} \times_k (Y_s \setminus F) \rightarrow (Y_s \setminus F)$ induces an isomorphism $\pi^{\ast}: H^{p,q}(Y_{s} \setminus F,R) \xrightarrow{\cong} H^{p,q}(\mathbb{G}_{m,k} \times_k Y_{s} \setminus F,R)$.
\end{Lem}

\begin{proof}
By assumption, there is a $\mathbb{G}_{m,k}$-action
\begin{center}
$\gamma: \mathbb{G}_{m,k} \times_k X \rightarrow X, (\lambda, x) \mapsto \lambda \ast_{\gamma} x$
\end{center}
on $X$ which makes $f$ a quasi-invariant of weight $d \in \mathbb{Z}$ with respect to $\gamma$. As explained after \cite[Definition 5.1]{Z}, one obtains a new $\mathbb{G}_{m,k}$-action $\tilde{\gamma}$ on $Y_s$ by setting
\begin{center}
$\lambda \ast_{\tilde{\gamma}} (x,u) := (\lambda^s \ast_{\gamma} x, \lambda^d u)$.
\end{center}
More precisely, let
\begin{center}
${()}^{l}: \mathbb{G}_{m,k} \rightarrow \mathbb{G}_{m,k}$
\end{center}
denote $l$th power morphism on $\mathbb{G}_{m,k}$ for $l \in \mathbb{Z}$. Then we first obtain a new $\mathbb{G}_{m,k}$-action $\gamma_{1}$ on $X$ via the composite
\begin{center}
$\gamma_{1}:\mathbb{G}_{m,k} \times_k X \xrightarrow{{()}^s \times_k id_X} \mathbb{G}_{m,k} \times_k X \xrightarrow{\gamma} X$.
\end{center}
Similarly, if we consider the multiplication action of $\mathbb{G}_{m,k}$ on $\mathbb{A}^1_k$ (with coordinate $u$)
\begin{center}
$m: \mathbb{G}_{m,k} \times_k \mathbb{A}^1_k \rightarrow \mathbb{A}^1_{k}, (\lambda, u) \mapsto \lambda \cdot u$,
\end{center}
we obtain a new $\mathbb{G}_{m,k}$-action $\gamma_{2}$ on $\mathbb{A}^1_k$ via the composite
\begin{center}
$\gamma_{2}: \mathbb{G}_{m,k} \times_k \mathbb{A}^1_k \xrightarrow{{()}^{d} \times_k id_{\mathbb{A}^1_k}} \mathbb{G}_{m,k} \times_k \mathbb{A}^1_k \xrightarrow{m} \mathbb{A}^1_k$.
\end{center}
If we let $\pi_{1}: \mathbb{G}_{m,k} \times_k X \times_k \mathbb{A}^1_k \rightarrow \mathbb{G}_{m,k} \times_k X$ and $\pi_{2}: \mathbb{G}_{m,k} \times_k X \times_k \mathbb{A}^1_k \rightarrow \mathbb{G}_{m,k} \times_k \mathbb{A}^1_k$ denote the projections, we then obtain an action $\tilde{\gamma}$ on $X \times_k \mathbb{A}^1_k$ which is simply defined as the product
\begin{center}
$\mathbb{G}_{m,k} \times_k X \times_k \mathbb{A}^1_k \xrightarrow{(\gamma_{1} \circ \pi_{1}, \gamma_{2} \circ \pi_{2})} X \times_k \mathbb{A}^1_k$.
\end{center}
By construction, the action $\tilde{\gamma}$ makes the function $f - u^s \in \mathcal{O}_{X \times_{k} \mathbb{A}^1_k}(X \times_{k} \mathbb{A}^1_k)= \mathcal{O}_{X}(X)[u]$ a quasi-invariant of weight $ds \in \mathbb{Z}$; in particular, $\tilde{\gamma}$ descends to a $\mathbb{G}_{m,k}$-action on $Y_{s}$. By abuse of notation, we also call this action $\tilde{\gamma}$ and this is precisely the action on $Y_{s}$ indicated in the first paragraph of this proof. The action $\tilde{\gamma}$ on $Y_{s}$ clearly restricts to an action on $Y_s \setminus F$ (which, by abuse of notation, we also call $\tilde{\gamma}$).\\
Now we use the assumption that $\langle d,s \rangle = \mathbb{Z}$. Note that when the restriction of $\tilde{\gamma}$ to $\mu_{s}$ is evaluated at $k$-points, i.e., actual $s$th roots of unity over $k$, then one recovers the $d$-th powers of the Galois actions of $\mu_{s}(k)$ on $X$. Since $\langle d,s \rangle = \mathbb{Z}$, taking the $d$-th power defines an automorphism of $\mu_{s}$ and we need not distinguish between the evaluation of $\tilde{\gamma}$ at $k$-points of $\mu_{s}$ and the Galois action of $\mu_s (k)$ when considering the groups ${H^{p,q}(Y_s \setminus F, R)}^{\mu_s (k)}$. Now the reasoning from the proof of Lemma \ref{Extension-2} (or rather Lemma \ref{Extension}) finishes the proof.
\end{proof}

\begin{Rem}\label{Extension-Bicyclic}
The $\mathbb{G}_{m,k}$-action $\tilde{\gamma}$ in the proof of Lemma \ref{Extension-Lem} above also shows: If there is a $\mathbb{G}_{m,k}$-action $\gamma$ on $X$ which makes $f$ a quasi-invariant of weight $d \in \mathbb{Z}$ with respect to $\gamma$, then there is also a $\mathbb{G}_{m,k}$-action $\tilde{\gamma}$ on $X \times_k \mathbb{A}^1_k$ which makes $f - u^s$ (where $u$ is the coordinate of $\mathbb{A}^1_k$) a quasi-invariant of weight $ds \in \mathbb{Z}$ with respect to $\tilde{\gamma}$.
\end{Rem}

The next theorem shows that the triviality of the Galois action of $\mu_s (k)$ on the group $H^{p,q}(Y_{s}\setminus F,R)$ also determines the bijectivity of the homomorphism $\varphi_s^{\ast}: H^{p,q}(X,R) \rightarrow H^{p,q}(Y_{s},R)$:

\begin{Thm}\label{GeneralTheorem}
Let $(p,q) \in \mathbb{Z}^2$. Assume that
\begin{itemize}
\item[(a)] $s \in R^{\times}$
\item[(b)] the Galois action of $\mu_s (k)$ on $H^{p,q}(Y_{s}\setminus F, R)$ is trivial
\end{itemize}
Then the morphism $\varphi_{s}: Y_{s} \rightarrow X$ induces an isomorphism $H^{p,q}(X,R)\cong H^{p,q}(Y_{s},R)$.
\end{Thm}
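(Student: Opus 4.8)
The plan is to compare, via $\varphi_s$, the localization long exact sequences of motivic cohomology attached to the smooth closed immersions $F_0 \hookrightarrow X$ and $F \hookrightarrow Y_s$, and to finish with the five lemma. Injectivity of $\varphi_s^{\ast}$ is in any case automatic by Remark \ref{AlwaysInjective}, so the real content is surjectivity, but the five lemma delivers both at once.

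First I would assemble the two inputs needed for the comparison. On the boundary, $\varphi_s$ restricts to an isomorphism $F \xrightarrow{\cong} F_0$ (noted just after the General Assumptions), so $\varphi_s^{\ast}\colon H^{a,b}(F_0,R)\to H^{a,b}(F,R)$ is an isomorphism in every bidegree $(a,b)\in\mathbb{Z}^2$. On the open parts, hypothesis (a) lets me apply Corollary \ref{FixedSubgroup}, which identifies $H^{a,b}(X\setminus F_0,R)$ with ${H^{a,b}(Y_s\setminus F,R)}^{\mu_s(k)}$ through $\varphi_s^{\ast}$; hypothesis (b) says the Galois action is trivial, so this fixed subgroup is all of $H^{a,b}(Y_s\setminus F,R)$, and therefore $\varphi_s^{\ast}\colon H^{a,b}(X\setminus F_0,R)\to H^{a,b}(Y_s\setminus F,R)$ is an isomorphism in every bidegree (this is exactly Corollary \ref{Iso1}).

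Next I would invoke the functoriality of the Gysin triangle (and of its associated localization sequence) with respect to the square
\begin{center}
$\begin{xy}\xymatrix{ F \ar[d] \ar[r] & Y_s \ar[d]^{\varphi_s} \\ F_0 \ar[r] & X }\end{xy}$
\end{center}
after checking its hypotheses: $F_0$ is a smooth closed subscheme of $X$ of codimension $1$ by the General Assumptions; $F$ is a smooth closed subscheme of $Y_s$, since $\mathcal{O}_{Y_s}/(u)\cong A/(f)=\mathcal{O}_{F_0}$ exhibits $F\cong F_0$, and it has codimension $1$ because $\varphi_s$ is finite surjective, so $\dim Y_s=\dim X$ while $\dim F=\dim F_0=\dim X-1$; and the underlying diagram of topological spaces is cartesian, because $\varphi_s^{-1}(F_0)=\{(x,u): u^s=f(x)=0\}=\{(x,0):f(x)=0\}$ equals $F$ set-theoretically as $k$ has characteristic $0$. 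This yields a morphism from the localization long exact sequence of $(X,F_0)$ to that of $(Y_s,F)$ in which, by the previous paragraph, every vertical map coming from $F_0\to F$ and from $X\setminus F_0\to Y_s\setminus F$ is an isomorphism.

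Finally I would apply the five lemma to the five-term segment
\begin{center}
$H^{p-1,q}(X\setminus F_0,R)\to H^{p-2,q-1}(F_0,R)\to H^{p,q}(X,R)\to H^{p,q}(X\setminus F_0,R)\to H^{p-1,q-1}(F_0,R)$
\end{center}
together with the corresponding segment for $Y_s$: the four outer vertical maps are isomorphisms, hence so is the middle one $\varphi_s^{\ast}\colon H^{p,q}(X,R)\to H^{p,q}(Y_s,R)$. I expect no genuine obstacle here; the only steps requiring some care are verifying the hypotheses of the Gysin functoriality — smoothness of $F$, that it has codimension $1$, and the cartesian condition on topological spaces — and keeping the bidegree bookkeeping consistent, since all the substantive work has already been carried out in Corollary \ref{FixedSubgroup}.
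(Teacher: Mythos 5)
Your overall strategy --- comparing the two localization sequences via $\varphi_s$, identifying the open-part vertical maps through Corollaries \ref{FixedSubgroup} and \ref{Iso1}, and finishing with the five lemma --- is exactly the paper's. The gap is in your treatment of the vertical maps on the $F$-terms. The square relating $(F,Y_s)$ to $(F_0,X)$ is cartesian only on underlying topological spaces: scheme-theoretically $\varphi_s^{-1}(F_0)$ is cut out by $u^s=0$, i.e.\ it is the divisor $sF$, not $F$. Because the square is not transverse, the map induced between the boundary terms of the Gysin/localization sequences is \emph{not} the naive pullback $(\varphi_s|_F)^{\ast}$ along the isomorphism $F\cong F_0$; by the excess-intersection (ramification) formula of D\'eglise \cite[Th\'eor\`eme 3.3]{D} it is $s\cdot(\varphi_s|_F)^{\ast}$. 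So your assertion that these vertical maps are isomorphisms ``because $\varphi_s$ restricts to an isomorphism $F\cong F_0$'' justifies the wrong map. The correct map is still an isomorphism, but only because of hypothesis (a), $s\in R^{\times}$; the fact that your argument never invokes the invertibility of $s$ at this step (you use it only through Corollary \ref{FixedSubgroup}) is the symptom of the problem. With, say, $R=\mathbb{Z}$ the boundary comparison $s\cdot(\varphi_s|_F)^{\ast}$ is not an isomorphism and the five-lemma argument collapses, which is precisely why the theorem carries the assumption $s\in R^{\times}$ at this point as well.

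Once that identification is corrected, your proof goes through and agrees with the paper's, up to an immaterial variation: the paper runs the four lemma on a four-term window to get surjectivity of $\varphi_s^{\ast}\colon H^{p,q}(X,R)\to H^{p,q}(Y_s,R)$ and quotes Remark \ref{AlwaysInjective} for injectivity, while you take a five-term window and get both at once.
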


\begin{proof}
Consider the commutative diagram of localization sequences

\begin{center}
$\begin{xy}
  \xymatrix{
     H^{p-2,q-1} (F_{0},R) \ar[d] \ar[r] & H^{p,q} (X,R) \ar[d] \ar[r] & H^{p,q} (X \setminus F_{0},R) \ar[d] \ar[r] & H^{p-1,q-1} (F_{0},R) \ar[d]\\
     H^{p-2,q-1} (F,R) \ar[r] & H^{p,q} (Y_{s},R) \ar[r] & H^{p,q} (Y_{s} \setminus F,R) \ar[r] & H^{p-1,q-1} (F,R)}
\end{xy}$
\end{center}

The third vertical homomorphism from the left is a monomorphism by Corollary \ref{FixedSubgroup}; by Corollary \ref{Iso1} and assumption (b), it is also an epimorphism. Note that both $F$ and $F_{0}$ have pure codimension $1$ as subschemes of $Y_{s}$ and $X$ respectively by \cite[\href{https://stacks.math.columbia.edu/tag/0BCV}{Tag 0BCV}]{stacks-project}). Therefore, by \cite[Th{\'e}or{\`e}me 3.3]{D}, the vertical homomorphisms on the left-hand side and right-hand side are just $s \cdot {(\varphi_{s})}^{\ast}$, where $\varphi_{s}: F \xrightarrow{\cong} F_{0}$ is the isomorphism obtained from restricting the cyclic covering map $\varphi_{s}:Y_{s} \rightarrow X$; since $s \in R^{\times}$, it follows that the vertical homomorphisms on the left-hand side and right-hand side are isomorphisms. The four lemma now implies that the second vertical homomorphism from the left is an epimorphism; by Remark \ref{AlwaysInjective}, it is also a monomorphism. This finishes the proof.
\end{proof}

Now we can prove a theorem which gives a sufficient condition for the bijectivity for the homomorphism $\varphi_s^{\ast}: H^{p,q}(X,R) \rightarrow H^{p,q}(Y_{s},R)$:

\begin{Thm}\label{Ind-Thm}
Let $(p,q) \in \mathbb{Z}^2$. Assume that
\begin{itemize}
\item[(a)] $s \in R^{\times}$
\item[(b)] there is a $\mathbb{G}_{m,k}$-action $\gamma$ on $X$, which makes $f$ a quasi-invariant of weight $d \in \mathbb{Z}$ with respect to $\gamma$ and $\langle d,s \rangle = \mathbb{Z}$
\item[(c)] $H^{p-1,q-1}(Y_{s} \setminus F,R)=0$
\end{itemize}
Then the morphism $\varphi_{s}: Y_{s} \rightarrow X$ induces an isomorphism $H^{p,q}(X,R)\cong H^{p,q}(Y_{s},R)$.
\end{Thm}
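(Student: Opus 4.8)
The plan is to reduce the statement to Theorem \ref{GeneralTheorem}. Hypothesis (a) of that theorem is our hypothesis (a), so it suffices to verify its hypothesis (b), namely that the Galois action of $\mu_s(k)$ on $H^{p,q}(Y_s\setminus F,R)$ is trivial; Theorem \ref{GeneralTheorem} will then produce the asserted isomorphism $\varphi_s^{\ast}\colon H^{p,q}(X,R)\xrightarrow{\cong}H^{p,q}(Y_s,R)$.

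To check the triviality of the Galois action I would invoke Remark \ref{Extension-Rem}. Our hypothesis (b) furnishes a $\mathbb{G}_{m,k}$-action $\gamma$ on $X$ making $f$ a quasi-invariant of weight $d$ with $\langle d,s\rangle=\mathbb{Z}$, and that remark then reduces the triviality of the Galois action on $H^{p,q}(Y_s\setminus F,R)$ to the single assertion that the projection $\pi\colon\mathbb{G}_{m,k}\times_k(Y_s\setminus F)\to Y_s\setminus F$ induces an isomorphism $\pi^{\ast}\colon H^{p,q}(Y_s\setminus F,R)\xrightarrow{\cong}H^{p,q}(\mathbb{G}_{m,k}\times_k(Y_s\setminus F),R)$. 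So everything comes down to this isomorphism, and this is where hypothesis (c) enters.

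For that, write $Z:=Y_s\setminus F\in Sm_k$ and use the standard $\mathbb{G}_m$-stability of motivic cohomology: from $M(\mathbb{G}_{m,k})\cong R\oplus R(1)[1]$ in $\textbf{DM}_{Nis}^{eff,-}(k,R)$ and the K\"unneth isomorphism one obtains a natural splitting $H^{p,q}(\mathbb{G}_{m,k}\times_k Z,R)\cong H^{p,q}(Z,R)\oplus H^{p-1,q-1}(Z,R)$ in which $\pi^{\ast}$ is exactly the inclusion of the first summand (this matches Remark \ref{GM-Chow}, where the analogous statement on Chow groups, together with injectivity of $\pi^{\ast}$ in general, is already recorded; it can also be extracted from the localization sequence of $\{0\}\times_k Z\subset\mathbb{A}^1_k\times_k Z$ together with homotopy invariance, the relevant Gysin pushforward vanishing because the normal bundle of $\{0\}\times_k Z$ is trivial). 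Hence $\pi^{\ast}$ is an isomorphism if and only if $H^{p-1,q-1}(Z,R)=0$, which is precisely hypothesis (c). This finishes the argument.

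The one point that needs genuine care is the identification of $\pi^{\ast}$ with the inclusion of the direct summand $H^{p,q}(Z,R)$ in the decomposition of $H^{p,q}(\mathbb{G}_{m,k}\times_k Z,R)$, i.e.\ the compatibility of the K\"unneth splitting with the projection $\pi$. Since injectivity of $\pi^{\ast}$ is already provided by Remark \ref{GM-Chow}, only surjectivity is really at stake, so it suffices to identify the cokernel of $\pi^{\ast}$ with $H^{p-1,q-1}(Z,R)$, which vanishes by (c). Apart from this bookkeeping, the proof is a direct concatenation of Theorem \ref{GeneralTheorem}, Remark \ref{Extension-Rem}, and the computation of the motivic cohomology of $\mathbb{G}_{m,k}\times_k Z$.
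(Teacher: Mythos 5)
Your proposal is correct and follows essentially the same route as the paper: reduce via Theorem \ref{GeneralTheorem} and Remark \ref{Extension-Rem} to showing that $\pi^{\ast}\colon H^{p,q}(Y_s\setminus F,R)\to H^{p,q}(\mathbb{G}_{m,k}\times_k(Y_s\setminus F),R)$ is an isomorphism, and then use hypothesis (c) to kill the extra piece, which is exactly what the paper does. The only (harmless) variation is in the last step, where you invoke the splitting $M(Z\times_k\mathbb{G}_{m,k})\cong M(Z)\oplus M(Z)(1)[1]$ to identify the cokernel of $\pi^{\ast}$ with $H^{p-1,q-1}(Z,R)$, whereas the paper factors $\pi$ through $Z\times_k\mathbb{A}^1_k$ and combines homotopy invariance, the injectivity from Remark \ref{GM-Chow}, and the localization sequence for $\mathbb{G}_{m,k}\subset\mathbb{A}^1_k$ — an alternative you yourself note in passing.
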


\begin{proof}
Assumption (b) and Lemma \ref{Extension-Lem} guarantee that the Galois action of $\mu_{s}(k)$ is trivial on $H^{p,q}(Y_{s}\setminus F,R)$ as soon as the projection $\pi: (Y_{s}\setminus F)\times_{k}\mathbb{G}_{m,k} \rightarrow Y_{s} \setminus F$ induces an isomorphism
\begin{center}
$\pi^{\ast}: H^{p,q}(Y_{s}\setminus F, R) \xrightarrow{\cong} H^{p,q}((Y_{s}\setminus F)\times_{k}\mathbb{G}_{m,k}, R)$.
\end{center}
But $\pi$ has the factorization
\begin{center}
$\pi: (Y_{s}\setminus F) \times_k \mathbb{G}_{m,k} \hookrightarrow (Y_{s}\setminus F) \times_k \mathbb{A}^1_k \rightarrow Y_{s} \setminus F$,
\end{center}
where the first map is the inclusion and the second map is the projection on the first factor; the latter map induces an isomorphism on motivic cohomology by homotopy invariance. By Remark \ref{GM-Chow}, the first map always induces an injection on motivic cohomology and therefore it suffices to show that is also induces a surjection on motivic cohomology in bidegree $(p,q)$. This follows from assumption (c) and the exact localization sequence
\begin{center}
$...\rightarrow H^{p-2,q-1}(Y_{s}\setminus F,R)\rightarrow H^{p,q}((Y_{s}\setminus F)\times_{k}\mathbb{A}^{1}_{k},R)\rightarrow H^{p,q}((Y_{s}\setminus F)\times_{k}\mathbb{G}_{m,k},R) \rightarrow H^{p-1,q-1}(Y_{s}\setminus F,R)=0$
\end{center}
So the Galois action of $\mu_{s}(k)$ is trivial on $H^{p,q}(Y_{s}\setminus F,R)$. Now the statement follows from Theorem \ref{GeneralTheorem}.
\end{proof}



Theorem \ref{Ind-Thm} immediately implies the following theorem on the motivic cohomology groups in bidegree $(2i,i)$ for $i \in \mathbb{Z}$:

\begin{Thm}\label{Chow}
Assume that
\begin{itemize}
\item[(a)] $s \in R^{\times}$
\item[(b)] there is a $\mathbb{G}_{m,k}$-action $\gamma$ on $X$, which makes $f$ a quasi-invariant of weight $d \in \mathbb{Z}$ with respect to $\gamma$ and $\langle d,s \rangle = \mathbb{Z}$
\end{itemize}
Then $\varphi_{s}: Y_{s} \rightarrow X$ induces isomorphisms $H^{2i,i} (X,R) \cong H^{2i,i} (Y_{s},R)$ for $i \in \mathbb{Z}$.
\end{Thm}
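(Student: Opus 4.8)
The plan is to read off Theorem~\ref{Chow} as the family of special cases $(p,q)=(2i,i)$, $i\in\mathbb{Z}$, of Theorem~\ref{Ind-Thm}. Hypotheses (a) and (b) of Theorem~\ref{Ind-Thm} are literally hypotheses (a) and (b) here, so the only point left to settle is hypothesis (c) of Theorem~\ref{Ind-Thm} in this bidegree, namely the vanishing
\[
H^{2i-1,\,i-1}(Y_{s}\setminus F,\,R)=0\qquad\text{for every } i\in\mathbb{Z}.
\]
Granting this, Theorem~\ref{Ind-Thm} gives at once that $\varphi_{s}^{\ast}\colon H^{2i,i}(X,R)\to H^{2i,i}(Y_{s},R)$ is an isomorphism for each $i$, which is exactly the assertion; for $i\le 0$ both groups vanish anyway by negativity of the weight, so nothing is lost there either.

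To prove the displayed vanishing I would invoke the identification of motivic cohomology with Bloch's higher Chow groups recalled in Section~\ref{2.3}. The scheme $U:=Y_{s}\setminus F=\operatorname{Spec}(\mathcal{O}_{X}(X)[u]_{u}/\langle f-u^{s}\rangle)$ is smooth by the General Assumptions and integral because $u^{s}-f$ is prime in $\mathcal{O}_{X}(X)[u]$, hence it is a smooth equidimensional affine (in particular quasi-projective) $k$-variety, so $H^{p,q}(U,R)\cong CH^{q}(U,\,2q-p;\,R)$. The right-hand side vanishes as soon as $2q-p<0$, since Bloch's cycle complex is supported in non-negative homological degrees (the case $R=\mathbb{Z}$ is \cite{V1}; the same argument applies verbatim with $R$-coefficients, or one deduces the $R$-coefficient statement from the integral one via the universal-coefficient sequence). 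For $(p,q)=(2i-1,\,i-1)$ one has $2q-p=2(i-1)-(2i-1)=-1<0$, so the group is zero uniformly in $i$, which is precisely hypothesis (c) of Theorem~\ref{Ind-Thm} in bidegree $(2i,i)$.

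Consequently the proof is essentially immediate once Theorem~\ref{Ind-Thm} is available, and there is no substantial obstacle. The only step requiring a line of justification is the degree-versus-weight vanishing $H^{p,q}(U,R)=0$ for $p>2q$ with $U$ smooth and with coefficients in an arbitrary coefficient ring $R$ rather than in $\mathbb{Z}$, and this is standard. In particular, the $\mathbb{G}_{m,k}$-action $\gamma$, the coprimality hypothesis $\langle d,s\rangle=\mathbb{Z}$, and the assumption $s\in R^{\times}$ are all used here only through their role inside Theorem~\ref{Ind-Thm}.
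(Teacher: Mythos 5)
Your proposal is correct and follows essentially the same route as the paper: the paper also deduces Theorem \ref{Chow} by applying Theorem \ref{Ind-Thm} in bidegree $(2i,i)$ and verifying hypothesis (c) through the vanishing $H^{2i-1,i-1}(Y_s\setminus F,R)=0$, citing \cite[Vanishing Theorem 19.3]{MVW}, which is the same $p>2q$ vanishing you justify via the comparison with Bloch's higher Chow groups. One negligible slip: your aside that both groups vanish for $i\le 0$ is false at $i=0$ (where $H^{0,0}(X,R)\cong R$ for connected $X$), but this does not matter since your main argument already covers all $i\in\mathbb{Z}$.
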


\begin{proof}
Let $i \in \mathbb{Z}$ be arbitrary. Then the statement in the theorem follows from Theorem \ref{Ind-Thm} and the fact that $H^{2i-1,i-1} (Y_{s} \setminus F,R) = 0$ (cf. \cite[Vanishing Theorem 19.3]{MVW}).
\end{proof}

The reasoning in the proofs of Theorems \ref{Ind-Thm} and \ref{Chow} also work for Chow groups tensored with a coefficient ring $R$ such that $s \in R^{\times}$:

\begin{Thm}\label{Chow-Tensor}
Assume that
\begin{itemize}
\item[(a)] $s \in R^{\times}$
\item[(b)] there is a $\mathbb{G}_{m,k}$-action $\gamma$ on $X$, which makes $f$ a quasi-invariant of weight $d \in \mathbb{Z}$ with respect to $\gamma$ and $\langle d,s \rangle = \mathbb{Z}$
\end{itemize}
Then $\varphi_{s}$ induces isomorphisms $CH^{i}(X)\otimes_{\mathbb{Z}}R \cong CH^{i}(Y_{s})\otimes_{\mathbb{Z}}R$ for $i \geq 0$.
\end{Thm}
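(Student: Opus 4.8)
The plan is to transcribe the proofs of Theorems \ref{GeneralTheorem}, \ref{Ind-Thm} and \ref{Chow}, replacing the presheaf $H^{2i,i}(-,R)$ everywhere by $CH^{i}(-)\otimes_{\mathbb{Z}}R$. This substitution is legitimate: since $CH^{i}(-)=H^{2i,i}(-,\mathbb{Z})$ is a presheaf with transfers (cf. \cite[Example 13.11]{MVW}), post-composing with the additive functor $-\otimes_{\mathbb{Z}}R$ from abelian groups to $R$-modules yields a presheaf of $R$-modules with transfers. Hence all of the formal statements proved in the excerpt for an arbitrary presheaf of $R$-modules with transfers -- in particular Corollaries \ref{Existence}, \ref{FixedSubgroup} and \ref{Iso1} -- apply to it. We treat all $i \geq 0$ at once, with the convention $CH^{-1}=0$.

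The first step is to show that the Galois action of $\mu_s(k)$ on $CH^{i}(Y_{s}\setminus F)\otimes_{\mathbb{Z}}R$ is trivial. Here the isomorphism required in the $\mathbb{G}_{m,k}$-argument of Lemma \ref{Extension-2} and Remark \ref{Extension-Rem} is available unconditionally: by Remark \ref{GM-Chow} the projection $\pi\colon \mathbb{G}_{m,k}\times_{k}(Y_{s}\setminus F)\to Y_{s}\setminus F$ always induces an isomorphism on Chow groups tensored with any coefficient ring. So, using assumption (b), the $\mathbb{G}_{m,k}$-action $\tilde{\gamma}$ on $Y_{s}\setminus F$ given by $\lambda \ast_{\tilde{\gamma}}(x,u)=(\lambda^{s}\ast_{\gamma}x,\lambda^{d}u)$, and the fact that raising to the $d$-th power permutes $\mu_s$ (as $\langle d,s \rangle=\mathbb{Z}$), the argument of Remark \ref{Extension-Rem} goes through verbatim: each $g\in\mu_s(k)$ factors as a section of $\pi$ followed by $\tilde{\gamma}$, and since $\pi^{\ast}$ is invertible the induced maps $g^{\ast}$ all coincide, hence equal the identity. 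No vanishing hypothesis (as in Theorem \ref{Ind-Thm}(c)) is needed, precisely because Remark \ref{GM-Chow} supplies the isomorphism $\pi^{\ast}$ for free.

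Combining this with the analogue of Corollary \ref{Iso1} for the presheaf $CH^{i}(-)\otimes_{\mathbb{Z}}R$, it follows that $\varphi_s$ induces an isomorphism $CH^{i}(X\setminus F_0)\otimes_{\mathbb{Z}}R\xrightarrow{\cong}CH^{i}(Y_{s}\setminus F)\otimes_{\mathbb{Z}}R$. Then, exactly as in the proof of Theorem \ref{GeneralTheorem}, I would tensor over $\mathbb{Z}$ the commutative ladder of localization sequences
\begin{center}
$\begin{xy}
  \xymatrix{
     CH^{i-1}(F_{0}) \ar[d] \ar[r] & CH^{i}(X) \ar[d] \ar[r] & CH^{i}(X\setminus F_{0}) \ar[d] \ar[r] & 0\\
     CH^{i-1}(F) \ar[r] & CH^{i}(Y_{s}) \ar[r] & CH^{i}(Y_{s}\setminus F) \ar[r] & 0}
\end{xy}$
\end{center}
whose functoriality is \cite[Th{\'e}or{\`e}me 3.3]{D} (these are the parts of the motivic localization sequences around bidegree $(2i,i)$, using that $H^{2i-1,i-1}(-,\mathbb{Z})=0$ by \cite[Vanishing Theorem 19.3]{MVW}); as $-\otimes_{\mathbb{Z}}R$ is right-exact, the tensored rows remain right-exact. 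As in Theorem \ref{GeneralTheorem}, the leftmost vertical map is $s\cdot\varphi_s^{\ast}$ for the isomorphism $\varphi_s\colon F\xrightarrow{\cong}F_{0}$, hence an isomorphism after $\otimes_{\mathbb{Z}}R$ since $s\in R^{\times}$, while the third vertical map is an isomorphism by the previous paragraph; the four lemma then shows that $\varphi_s^{\ast}\colon CH^{i}(X)\otimes_{\mathbb{Z}}R\to CH^{i}(Y_{s})\otimes_{\mathbb{Z}}R$ is an epimorphism. For injectivity I would use Remark \ref{graph}: the finite correspondence $\overline{\mu}$ from $X$ to $Y_{s}$ represented by the transpose of the graph of $\varphi_s$ (this makes sense as $\varphi_s\colon Y_{s}\to X$ is finite) satisfies $\varphi_s\circ\overline{\mu}=s\cdot id_{X}$, so $\overline{\mu}^{\ast}\circ\varphi_s^{\ast}=s\cdot id$ on $CH^{i}(X)\otimes_{\mathbb{Z}}R$, which is injective because $s\in R^{\times}$. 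This finishes the proof.

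I do not expect a genuine obstacle here: once $CH^{i}(-)\otimes_{\mathbb{Z}}R$ is recognised as a presheaf of $R$-modules with transfers and Remark \ref{GM-Chow} is invoked, the whole argument is formal. The only delicate point is the identification of the outer vertical maps of the ladder as $s\cdot\varphi_s^{\ast}$ -- the factor $s$ being the ramification multiplicity, since $\varphi_s^{-1}(F_0)=\{u^s=0\}=sF$ as a divisor on $Y_{s}$ -- but this is exactly the input already used in the proof of Theorem \ref{GeneralTheorem}, and \cite[Th{\'e}or{\`e}me 3.3]{D} covers it in the present situation too.
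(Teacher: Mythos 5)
Your proposal is correct and follows essentially the same route as the paper: the paper's own proof of Theorem \ref{Chow-Tensor} explicitly offers, as its alternative argument, exactly your strategy of rerunning the reasoning of Theorems \ref{GeneralTheorem}, \ref{Ind-Thm} and \ref{Chow} with $CH^{i}(-)\otimes_{\mathbb{Z}}R$ in place of $H^{2i,i}(-,R)$, using that the tensored Chow localization sequences remain right-exact (so no vanishing hypothesis is needed, as you observe via Remark \ref{GM-Chow}). The only difference is that the paper's first, quicker route simply combines the identification $H^{2i,i}(V,R)\cong CH^{i}(V)\otimes_{\mathbb{Z}}R$ with Theorem \ref{Chow}, which your detailed transcription makes unnecessary but does not contradict.
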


\begin{proof}
One may use the general isomorphism $H^{2i,i}(V,R) \cong CH^i (V) \otimes_{\mathbb{Z}}R$ for $V \in Sm_k$ (cf. \cite[Preface]{MVW}) and then simply apply Theorem \ref{Chow}. Alternatively, one may simply use the isomorphism $H^{2i,i}(V,\mathbb{Z}) \cong CH^i (V)$ for $V \in Sm_k$ and realize that the reasoning in the proof of Theorem \ref{Ind-Thm} also works for Chow groups tensored with any coefficient ring $R$ such that $s \in R^{\times}$ as the sequence
\begin{center}
$...\rightarrow CH^{i-1}(Y_{s}\setminus F)\otimes_{\mathbb{Z}}R \rightarrow CH^{i}((Y_{s}\setminus F)\times_{k}\mathbb{A}^{1}_{k})\otimes_{\mathbb{Z}}R \rightarrow CH^{i}((Y_{s}\setminus F)\times_{k}\mathbb{G}_{m,k})\otimes_{\mathbb{Z}}R \rightarrow 0$
\end{center}
is always exact. 
\end{proof}

\begin{Bsp}
A simple example satisfying the hypotheses of Theorem \ref{Chow-Tensor} is $X = \mathbb{A}^n_{k} = Spec (k[x_{1},...,x_{n}])$ with $\mathbb{G}_{m,k}$-action given by $(\lambda, x_{1},...,x_{n}) \mapsto (x_{1} \lambda^{d_{1}},...,x_{n} \lambda^{d_{n}})$ for some $d_{j} \in \mathbb{Z}$. In this case any coordinate function $x_{j}$ is a quasi-invariant of weight $d_{j}$ and, if we fix some $j \in \{1,...,n\}$, the cyclic covering $Y_{s}$ of $X$ of order $s$ with respect to the coordinate function $x_{j}$ is again isomorphic to an affine space of dimension $n$ over $k$. If $s \in R^{\times}$ and $\langle d_{j}, s \rangle = \mathbb{Z}$, then Theorem \ref{Chow-Tensor} implies that $CH^{i}(X)\otimes_{\mathbb{Z}}R \cong CH^{i}(Y_{s})\otimes_{\mathbb{Z}}R$ for $i \geq 0$ (which is clear as both $X$ and $Y_{s}$ are affine spaces of dimension $n$ over $k$).
\end{Bsp}

Non-trivial examples satisfying the hypotheses of Theorem \ref{Chow-Tensor} are given in Section \ref{Examples}.

\begin{Rem}\label{Chow-Torsion}
Under the assumptions of Theorem \ref{Chow}, the reasoning in the proof of Theorem \ref{Ind-Thm} actually shows that the Galois action is also trivial on $CH^{i}(Y_{s}\setminus F)$ for $i \geq 1$ (even without tensoring with a coefficient ring $R$ such that $s \in R^{\times}$) and hence the composite $\mu \circ \varphi_s$ from Corollary \ref{Existence} or Remark \ref{graph} induces multiplication with $s$ on the Chow groups $CH^{i}(Y_{s}\setminus F)$ for $i \geq 1$. In conclusion, if $CH^{i}(X)=0$, $CH^{i}(Y_{s})\cong CH^{i}(Y_{s}\setminus F)$ and the assumptions of Theorem \ref{Chow} are satisfied, then it follows that $CH^{i}(Y_{s})$ is an $s$-torsion group.
\end{Rem}

We can now prove several corollaries to Theorems \ref{Ind-Thm}, \ref{Chow} and Theorem \ref{Chow-Tensor}:

\begin{Kor}\label{Euler-2}
Under the assumptions of Theorem \ref{Chow}, further assume that $d := dim (Y_{s}) \geq 3$ and that $CH^d (X)\otimes_{\mathbb{Z}} \mathbb{Q} = 0$. Then every vector bundle $Y_s$ of rank $d$ has a trivial direct summand of rank $1$.
\end{Kor}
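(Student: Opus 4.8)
The plan is to obtain the statement as a direct application of Proposition \ref{Euler} to the variety $Y_s$ in place of $X$, so that the entire content of the proof is checking that $Y_s$ satisfies the hypotheses of that proposition. First I would note that $Y_s$ is a smooth affine $k$-variety: it is affine because it is a closed subscheme of the affine variety $X \times_k \mathbb{A}^1_k$, and it is smooth over $k$ by the General Assumptions (this is exactly the remark attributed to \cite[Definition 5.1]{Z}). Moreover, $k$ is algebraically closed of characteristic $0$ by assumption, and since $\varphi_s : Y_s \to X$ is finite and surjective we have $\dim(Y_s) = \dim(X)$; writing $d := \dim(Y_s)$ as in the statement, the hypothesis $d \geq 3$ then matches the dimension requirement in Proposition \ref{Euler}, and $CH^d$ is the top Chow group on both $X$ and $Y_s$.

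It remains to verify the Chow-group hypothesis $CH^d(Y_s) \otimes_{\mathbb{Z}} \mathbb{Q} = 0$. For this I would apply Theorem \ref{Chow-Tensor} with the coefficient ring $R = \mathbb{Q}$: assumption (a) there holds because $s$ is a positive integer, hence invertible in $\mathbb{Q}$, and assumption (b) is part of the hypotheses of Theorem \ref{Chow}, which we are assuming here. Theorem \ref{Chow-Tensor} then yields isomorphisms $CH^i(X) \otimes_{\mathbb{Z}} \mathbb{Q} \xrightarrow{\cong} CH^i(Y_s) \otimes_{\mathbb{Z}} \mathbb{Q}$ for all $i \geq 0$; specializing to $i = d$ and invoking the hypothesis $CH^d(X) \otimes_{\mathbb{Z}} \mathbb{Q} = 0$ gives $CH^d(Y_s) \otimes_{\mathbb{Z}} \mathbb{Q} = 0$.

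With these two points in place, Proposition \ref{Euler} applied to $Y_s$ --- a smooth affine variety of dimension $d \geq 3$ over the algebraically closed field $k$ of characteristic $0$ with $CH^d(Y_s) \otimes_{\mathbb{Z}} \mathbb{Q} = 0$ --- shows that every rank-$d$ vector bundle on $Y_s$ has a trivial direct summand of rank $1$, which is the claim. There is essentially no genuine obstacle in this argument: it is a formal combination of Theorem \ref{Chow-Tensor} and Proposition \ref{Euler}. The only things that deserve a moment's attention are the identification $\dim(Y_s) = \dim(X)$, so that ``$CH^d$'' is the top Chow group on both sides, and the fact that the symbol $d$ in the statement denotes the dimension of $Y_s$ rather than the weight of the quasi-invariant $f$ appearing in assumption (b) of Theorem \ref{Chow}; no conflict arises because that weight plays no further role in the argument.
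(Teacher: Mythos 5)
Your proof is correct and follows exactly the paper's own route: the paper also deduces the statement by applying Theorem \ref{Chow-Tensor} with $R=\mathbb{Q}$ to get $CH^d(Y_s)\otimes_{\mathbb{Z}}\mathbb{Q}=0$ and then invoking Proposition \ref{Euler} for $Y_s$. Your additional checks (smoothness and affineness of $Y_s$, $\dim(Y_s)=\dim(X)$, and the clash of the symbol $d$ with the weight in assumption (b)) are sensible but routine verifications that the paper leaves implicit.
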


\begin{proof}
The statement follows from Theorem \ref{Chow-Tensor} with $R=\mathbb{Q}$ and from Proposition \ref{Euler}.
\end{proof}

\begin{Kor}\label{rank-hom}
Under the assumptions of Theorem \ref{Chow}, the morphism $\varphi_s$ induces an isomorphism $K_0 (X) \otimes \mathbb{Q} \cong K_0 (Y_{s}) \otimes \mathbb{Q}$.
\end{Kor}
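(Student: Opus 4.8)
The plan is to deduce this from Theorem~\ref{Chow-Tensor} by passing through the rational Chern character. Recall that for every smooth variety $V$ over $k$ — and $X$ and $Y_s$ qualify, being smooth affine, hence smooth quasi-projective over $k$ — the Chern character induces an isomorphism of abelian groups (in fact of graded rings)
\begin{center}
$\mathrm{ch}_V : K_0 (V) \otimes \mathbb{Q} \xrightarrow{\ \cong\ } \bigoplus_{i \geq 0} CH^i (V) \otimes \mathbb{Q}$,
\end{center}
the direct sum being finite since $CH^i(V) = 0$ for $i > \dim(V)$; this is the standard consequence of Grothendieck--Riemann--Roch (equivalently, of the rational triviality of the associated graded of the $\gamma$-filtration on $K_0$) and requires no properness hypothesis on $V$.

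First I would record that the Chern character is contravariantly functorial: for any morphism $g : V' \to V$ of smooth $k$-varieties one has $\mathrm{ch}_{V'} \circ g^{\ast} = g^{\ast} \circ \mathrm{ch}_V$, which is immediate from the functoriality of Chern classes under pullback. Specializing to $g = \varphi_s : Y_s \to X$ produces a commutative square whose vertical arrows are the isomorphisms $\mathrm{ch}_X$ and $\mathrm{ch}_{Y_s}$, whose top horizontal arrow is $\varphi_s^{\ast} : K_0(X) \otimes \mathbb{Q} \to K_0(Y_s) \otimes \mathbb{Q}$, and whose bottom horizontal arrow is $\bigoplus_i (\varphi_s^{\ast}\otimes\mathbb{Q}) : \bigoplus_i CH^i(X) \otimes \mathbb{Q} \to \bigoplus_i CH^i(Y_s) \otimes \mathbb{Q}$. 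Next I would invoke Theorem~\ref{Chow-Tensor} with $R = \mathbb{Q}$ — legitimate because $s > 1$ is an integer and hence a unit in $\mathbb{Q}$, while assumption (b) of Theorem~\ref{Chow} is in force — to conclude that $\varphi_s^{\ast} : CH^i(X) \otimes \mathbb{Q} \to CH^i(Y_s) \otimes \mathbb{Q}$ is an isomorphism for every $i \geq 0$, so the bottom arrow of the square is an isomorphism. Since the two vertical arrows are isomorphisms and the square commutes, a formal diagram chase forces $\varphi_s^{\ast} : K_0(X) \otimes \mathbb{Q} \to K_0(Y_s) \otimes \mathbb{Q}$ to be an isomorphism, which is the assertion.

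I do not expect a genuine obstacle here; the only point worth flagging is that the Chern character isomorphism is being applied to the non-proper smooth varieties $X$ and $Y_s$, but this is harmless, as the isomorphism $K_0(V)\otimes\mathbb{Q}\cong\bigoplus_i CH^i(V)\otimes\mathbb{Q}$ holds for every smooth quasi-projective variety over a field. As a sanity check — and as a hands-on alternative for the injectivity half — one can observe that $\varphi_s$ is finite and flat of degree $s$, so $\varphi_{s\ast}\mathcal{O}_{Y_s}$ is a locally free $\mathcal{O}_X$-module of rank $s$; the projection formula then gives $\varphi_{s\ast}\circ\varphi_s^{\ast} = s\cdot\mathrm{id}$ on $K_0(X)$, exhibiting $\varphi_s^{\ast}\otimes\mathbb{Q}$ as split injective. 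Surjectivity, however, still seems to require the comparison with Chow groups supplied by Theorem~\ref{Chow-Tensor}, so the Chern character argument above is the most economical route.
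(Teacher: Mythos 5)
Your argument is essentially the paper's own proof: the paper likewise combines Theorem \ref{Chow-Tensor} (with $R=\mathbb{Q}$) with the isomorphism $K_0(U)\otimes\mathbb{Q}\cong\bigoplus_i CH^i(U)\otimes\mathbb{Q}$ for smooth affine $U$ (citing Fulton, Example 15.2.16), the naturality of that isomorphism under pullback being left implicit there and made explicit by you. Your added remarks on non-properness and the split injectivity via $\varphi_{s\ast}\varphi_s^{\ast}=s\cdot\mathrm{id}$ are correct but not needed beyond this.
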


\begin{proof}
The statement follows from Theorem \ref{Chow-Tensor} and the isomorphism
\begin{center}
$\bigoplus_{i=0}^{d} CH^i (U) \otimes \mathbb{Q} \cong K_0 (U) \otimes \mathbb{Q}$
\end{center}
for any smooth affine scheme of dimension $d$ over $k$ (e.g. see \cite[Example 15.2.16]{Fu}).
\end{proof}

\begin{Kor}\label{H53}
Assume that
\begin{itemize}
\item[(a)] $s \in R^{\times}$
\item[(b)] there is a $\mathbb{G}_{m,k}$-action $\gamma$ on $X$, which makes $f$ a quasi-invariant of weight $d \in \mathbb{Z}$ with respect to $\gamma$ and $\langle d,s \rangle = \mathbb{Z}$
\item[(c)] $H^{2i,i} (X,R) = 0$ for $i \geq 1$
\end{itemize}
Then the morphism $\varphi_{s}: Y_{s} \rightarrow X$ induces an isomorphism $H^{2i+1,i+1}(X,R) \cong H^{2i+1,i+1}(Y_{s},R)$ for $i \geq 1$.
\end{Kor}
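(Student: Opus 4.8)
The plan is to reduce this to Theorem \ref{Ind-Thm} applied in bidegree $(p,q) = (2i+1, i+1)$. Assumptions (a) and (b) of the corollary are exactly assumptions (a) and (b) of Theorem \ref{Ind-Thm}, so the only thing to supply is assumption (c) of that theorem in this bidegree, namely the vanishing $H^{2i,i}(Y_s \setminus F, R) = 0$ for $i \geq 1$. Once this is known, Theorem \ref{Ind-Thm} immediately produces the desired isomorphism $H^{2i+1,i+1}(X,R) \cong H^{2i+1,i+1}(Y_s,R)$.

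To establish $H^{2i,i}(Y_s \setminus F, R) = 0$, I would first apply Theorem \ref{Chow} (whose hypotheses are precisely (a) and (b)): it gives an isomorphism $H^{2i,i}(X,R) \cong H^{2i,i}(Y_s,R)$, so hypothesis (c) of the corollary yields $H^{2i,i}(Y_s,R) = 0$ for $i \geq 1$. Then I would use the localization sequence for the smooth codimension-one closed subscheme $F \subset Y_s$,
\[
H^{2i,i}(Y_s,R) \to H^{2i,i}(Y_s \setminus F, R) \to H^{2i-1,i-1}(F,R),
\]
in which the left-hand term vanishes by the previous step and the right-hand term vanishes by the Vanishing Theorem (since $2i-1 > 2(i-1)$; cf. \cite[Vanishing Theorem 19.3]{MVW}). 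Hence $H^{2i,i}(Y_s \setminus F, R) = 0$ for $i \geq 1$. (Alternatively, one can argue with Chow groups directly, using $H^{2i,i}(-,R) \cong CH^i(-) \otimes_{\mathbb{Z}} R$ and the surjection $CH^i(Y_s) \twoheadrightarrow CH^i(Y_s \setminus F)$ from the localization sequence.)

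Combining the two steps, Theorem \ref{Ind-Thm} with $(p,q) = (2i+1,i+1)$ applies for every $i \geq 1$ and gives the claim. There is no genuine obstacle here: the proof is a matter of chaining the already-established results in the right order. The one point worth noting is that Theorem \ref{Ind-Thm} requires a vanishing statement about $Y_s \setminus F$, not about $X$, which is why one must first transport hypothesis (c) from $X$ to $Y_s$ via Theorem \ref{Chow} and then remove $F$ using the localization sequence — rather than, say, trying to use $H^{2i,i}(X \setminus F_0, R) = 0$ together with Corollary \ref{FixedSubgroup}, which would only control the $\mu_s(k)$-fixed subgroup.
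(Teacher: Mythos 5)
Your proposal is correct and follows essentially the same route as the paper, whose proof is simply ``Follows from Theorem \ref{Chow} and Theorem \ref{Ind-Thm}'': apply Theorem \ref{Ind-Thm} in bidegree $(2i+1,i+1)$, with its hypothesis (c) supplied by combining Theorem \ref{Chow} with hypothesis (c) of the corollary. Your explicit verification that $H^{2i,i}(Y_s\setminus F,R)=0$ via the localization sequence for $F\subset Y_s$ and the vanishing $H^{2i-1,i-1}(F,R)=0$ is exactly the glue step the paper leaves implicit, and it is carried out correctly.
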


\begin{proof}
Follows from Theorem \ref{Chow} and Theorem \ref{Ind-Thm}.
\end{proof}

\begin{Kor}\label{FiniteCoefficients}
Let $R = \mathbb{Z}/n\mathbb{Z}$. Assume that
\begin{itemize}
\item[(a)] $\langle s,n \rangle = \mathbb{Z}$
\item[(b)] there is a $\mathbb{G}_{m,k}$-action $\gamma$ on $X$, which makes $f$ a quasi-invariant of weight $d \in \mathbb{Z}$ with respect to $\gamma$ and $\langle d,s \rangle = \mathbb{Z}$
\item[(c)] $H^{p,q}(X,R) \cong H^{p,q}(Spec(k),R)$ for all $(p,q) \in \mathbb{Z}^2$
\item[(d)] $H^{p,q}(F,R) \cong H^{p,q}(Spec(k),R)$ for all $(p,q) \in \mathbb{Z}^2$
\end{itemize}
Then $H^{p,q}(Y_{s},R)=H^{p,q}(Spec(k),R)=0$ for all $(p,q) \in \mathbb{Z}^2$ such that $p > q \geq 1$ or $p < 0$.
\end{Kor}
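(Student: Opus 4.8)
Write $R=\mathbb{Z}/n\mathbb{Z}$. Since $\langle s,n\rangle=\mathbb{Z}$ we have $s\in R^{\times}$, so hypothesis (a) of Theorems \ref{GeneralTheorem} and \ref{Ind-Thm} and of Corollary \ref{Iso1} holds, and hypothesis (b) of the present statement is exactly the quasi-invariance hypothesis occurring there. I will use freely that $H^{a,b}(Spec(k),R)=0$ whenever $a>b$ (by \cite[Vanishing Theorem 19.3]{MVW}, as $\dim Spec(k)=0$) and whenever $a<0$ or $b<0$ (as $R(b)=0$ for $b<0$ and $R(b)$ sits in non-negative degrees); this already gives the assertion $H^{p,q}(Spec(k),R)=0$ in the claimed range. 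The strategy is to deduce the statement from Theorem \ref{Ind-Thm} once one knows $H^{p-1,q-1}(Y_{s}\setminus F,R)=0$, and to prove that vanishing by induction on $q$.

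\textbf{Step 1: the key vanishing.} I claim $H^{p,q}(Y_{s}\setminus F,R)=0$ for every $(p,q)$ with $p>q$ and for every $(p,q)$ with $p<0$, and I would prove this by induction on $q$; the cases $q<0$ are trivial since $R(q)=0$. For the inductive step fix $(p,q)$ with $q\geq0$ and with $p>q$ or $p<0$, so that $p\neq0$, and moreover $p-1\neq0$ (in the first case $p\geq2$, in the second $p-1<0$). First I would show the Galois action of $\mu_{s}(k)$ on $H^{p,q}(Y_{s}\setminus F,R)$ is cohomologically trivial: by (b) and Remark \ref{Extension-Rem} it suffices that the projection $\pi$ induce an isomorphism $\pi^{\ast}\colon H^{p,q}(Y_{s}\setminus F,R)\to H^{p,q}((Y_{s}\setminus F)\times_{k}\mathbb{G}_{m,k},R)$, and, exactly as in the proof of Theorem \ref{Ind-Thm} (localization for $(Y_{s}\setminus F)\times_{k}\{0\}$ inside $(Y_{s}\setminus F)\times_{k}\mathbb{A}^{1}_{k}$, homotopy invariance, and the automatic injectivity of $\pi^{\ast}$ from Remark \ref{GM-Chow}), this holds as soon as $H^{p-1,q-1}(Y_{s}\setminus F,R)=0$ — which follows from the induction hypothesis when $q\geq1$ (then $(p-1,q-1)$ still satisfies $p-1>q-1$, resp.\ $p-1<0$) and automatically when $q=0$ by negative weight. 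Cohomological triviality then yields, via Corollary \ref{Iso1}, an isomorphism $H^{p,q}(Y_{s}\setminus F,R)\cong H^{p,q}(X\setminus F_{0},R)$. Finally I would bound the right-hand side using the localization sequence $H^{p,q}(X,R)\to H^{p,q}(X\setminus F_{0},R)\to H^{p-1,q-1}(F_{0},R)$: by (c) the first term is $H^{p,q}(Spec(k),R)$, and by (d) together with $\varphi_{s}\colon F\xrightarrow{\cong}F_{0}$ the third term is $H^{p-1,q-1}(Spec(k),R)$; both vanish in the present range (first paragraph), so $H^{p,q}(X\setminus F_{0},R)=0$ and hence $H^{p,q}(Y_{s}\setminus F,R)=0$, closing the induction.

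\textbf{Step 2: conclusion.} Let $(p,q)$ satisfy $p>q\geq1$ or $p<0$. Then $(p-1,q-1)$ satisfies $p-1>q-1$ or $p-1<0$, so $H^{p-1,q-1}(Y_{s}\setminus F,R)=0$ by Step 1; hence all hypotheses of Theorem \ref{Ind-Thm} are met in this bidegree and $\varphi_{s}$ induces an isomorphism $H^{p,q}(X,R)\cong H^{p,q}(Y_{s},R)$. By (c) the left-hand side is $H^{p,q}(Spec(k),R)$, which is $0$ in this range; therefore $H^{p,q}(Y_{s},R)=H^{p,q}(Spec(k),R)=0$.

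\textbf{Expected main obstacle.} The delicate point is organizing the induction in Step 1 so that the weight-lowering shift $(p,q)\mapsto(p-1,q-1)$ never leaves the region $\{p>q\}\cup\{p<0\}$ on which $H^{\bullet,\bullet}(Y_{s}\setminus F,R)$ is controlled, and so that the ``bad'' bidegrees with $p\in\{0,1\}$ — where $X\setminus F_{0}$ carries extra $R$-cohomology coming from the unit $f$ and the divisor class of $F_{0}$ — are never encountered; this is exactly why the conclusion is restricted to $p>q\geq1$ (which forces $p\geq2$, hence $p-1\geq1$) together with $p<0$, rather than extending to all $(p,q)$. A minor point to check is that the functoriality of the localization sequences relating $(F_{0}\subset X)$ and $(F\subset Y_{s})$ is applicable, i.e.\ that the square with vertical map $\varphi_{s}$ and horizontal maps the two closed immersions is cartesian on underlying topological spaces — which holds since $f=u^{s}$ on $Y_{s}$ gives $F=\varphi_{s}^{-1}(F_{0})$ set-theoretically.
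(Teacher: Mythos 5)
Your argument is correct and follows essentially the same route as the paper: an induction along the shift $(p,q)\mapsto(p-1,q-1)$ that feeds Theorem \ref{Ind-Thm} via hypotheses (c), (d), the localization sequences and the vanishing $H^{p,q}(Spec(k),R)=0$ for $p\neq 0$; the paper anchors the induction at the Chow diagonal via Theorem \ref{Chow}, while you anchor it at negative weight and spell out explicitly the propagation of the vanishing of $H^{\bullet,\bullet}(Y_{s}\setminus F,R)$ that the paper leaves implicit. Two cosmetic points: when $q=0$ one can have $p=1$, so your claim ``$p-1\neq 0$'' can fail there (harmlessly, since $H^{0,-1}(F_{0},R)=0$ by negative weight), and the vanishing of $H^{a,b}(Spec(k),R)$ for $a<0$ is not because $R(b)$ sits in non-negative degrees (in the MVW normalization it sits in degrees $\leq b$) but is the standard computation $H^{p,q}(Spec(k),\mathbb{Z}/n\mathbb{Z})=0$ for $p\neq 0$ over an algebraically closed field of characteristic $0$, which is exactly the fact the paper invokes.
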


\begin{proof}
One can assume $p \leq 2q$. Then the statement follows inductively from Theorem \ref{Chow}, Theorem \ref{Ind-Thm} and the fact that $H^{p,q}(Spec(k),R)=0$ for $p\neq 0$ (cf. \cite[Preface]{MVW}): Note that, for each $(p,q) \in \mathbb{Z}^2$ with $p \leq 2q$, the base case is bidegree $(2p-2q,p-q) \in \mathbb{Z}^2$ (which is covered by Theorem \ref{Chow}) and applying Theorem \ref{Ind-Thm} exactly $2q-p$ times yields the claim for bidegree $(p,q)$.
\end{proof}

\begin{Rem}\label{FiniteCoefficients-2}
By inspection of the proof of Corollary \ref{FiniteCoefficients}, it follows that one may replace assumptions (c) and (d) by
\begin{itemize}
\item[(c')] $H^{p,q}(X,R) \cong H^{p,q}(Spec(k),R)$ for all $(p,q) \in \mathbb{Z}^2$ such that $p > q \geq 1$
\item[(d')] $H^{p,q}(F,R) \cong H^{p,q}(Spec(k),R)$ for all $(p,q) \in \mathbb{Z}^2$ such that $p > q \geq 1$
\end{itemize}
in order to deduce that $H^{p,q}(Y_{s},R)=H^{p,q}(Spec(k),R)=0$ for all $(p,q) \in \mathbb{Z}^2$ such that $p > q \geq 1$.
\end{Rem}

Recall that the cohomology group $H_{Nis}^{2}(Y,\textbf{I}^3)$ appears in the cohomological criterion in Theorem \ref{Syed} for the triviality of all vector bundles over any smooth affine fourfold $Y$ over $k$. In this context, Corollary \ref{FiniteCoefficients} enables us to prove the following result:

\begin{Kor}\label{FundamentalIdeal}
Under the assumptions of Corollary \ref{H53} for $R = \mathbb{Z}/2\mathbb{Z}$, further assume that $\dim (Y_{s}) = 4$. Then $H^2_{Nis}(X,\textbf{I}^3) \cong H_{Nis}^{2}(Y_{s},\textbf{I}^3)$.
\end{Kor}

\begin{proof}
Let $\overline{\textbf{I}}^j = \textbf{I}^{j}/\textbf{I}^{j+1}$ for any integer $j \geq 0$. It follows from \cite[Theorem 1.3]{Tot} and the affirmation of the Milnor conjectures on the mod $2$ norm residue homomorphism and quadratic forms that there are long exact sequences of the form
\begin{center}
$... \rightarrow H^{2+j,j-1}(V,\mathbb{Z}/2\mathbb{Z}) \rightarrow H^{2+j,j}(V,\mathbb{Z}/2\mathbb{Z}) \rightarrow H^{2}_{Nis}(V,\overline{\textbf{I}}^j) \rightarrow H^{3+j,j-1}(V,\mathbb{Z}/2\mathbb{Z}) \rightarrow ...$
\end{center}
for any smooth affine fourfold $V$ over $k$. Now \cite[Vanishing Theorem 19.3]{MVW} directly implies that the long exact sequence above for $j=3$ yields an isomorphism $H^{5,3}(V,\mathbb{Z}/2\mathbb{Z}) \cong H^{2}_{Nis}(V,\overline{\textbf{I}}^3)$ for any smooth affine fourfold $V$ over $k$.\\
Note that $\textbf{I}^4 = \overline{\textbf{I}}^4$ over $V$ by \cite[Proposition 5.1]{AF1}, so the proof of \cite[Lemma 4.0.3]{F} shows that $H^{2}_{Nis}(V, \textbf{I}^4) \cong H^{2}_{Nis}(V, \overline{\textbf{I}}^4) \cong H^{2}_{Nis}(V, \textbf{K}_{4}^{M}/2) = 0$ as well as $H^{3}_{Nis}(V, \textbf{I}^4) \cong H^{3}_{Nis}(V, \overline{\textbf{I}}^4) \cong H^{3}_{Nis}(V, \textbf{K}_{4}^{M}/2) = 0$. Now the long exact sequence of cohomology groups associated to the short exact sequence of sheaves
\begin{center}
$0 \rightarrow \textbf{I}^4 \rightarrow \textbf{I}^3 \rightarrow \overline{\textbf{I}}^3 \rightarrow 0$
\end{center}
over the small Nisnevich site of $V$ shows that $H^{2}_{Nis}(V, \textbf{I}^3) \cong H^{2}_{Nis}(V, \overline{\textbf{I}}^3)$ and, in particular, $H^{5,3}(V,\mathbb{Z}/2\mathbb{Z}) \cong H^{2}_{Nis}(V,\textbf{I}^3)$ by the first paragraph of this proof.\\
Finally, as both $X$ and $Y_{s}$ are smooth affine fourfolds over $k$, Corollary \ref{H53} shows that $\varphi_{s}$ induces an isomorphism $H^{5,3}(X,\mathbb{Z}/2\mathbb{Z}) \cong H^{5,3}(Y_{s},\mathbb{Z}/2\mathbb{Z})$ and therefore we obtain an isomorphism $H^2_{Nis}(X,\textbf{I}^3) \cong H_{Nis}^{2}(Y_{s},\textbf{I}^3)$. This finishes the proof.
\end{proof}


Note that the group $\mu_s (k)$ also acts on $Y_s$ and not only on $Y_s \setminus F$. By abuse of language, we also call the corresponding action the Galois action on $Y_s$.

\begin{Prop}\label{Equiv}
Let $(p,q) \in \mathbb{Z}^2$ and assume $s \in R^{\times}$.  Then the Galois action of $\mu_s (k)$ is trivial on $H^{p,q}(Y_{s}, R)$ if it is trivial on $H^{p,q}(Y_{s}\setminus F, R)$. Furthermore, the converse statement holds if the inclusion $Y_{s} \setminus F \rightarrow Y_s$ induces an isomorphism on motivic cohomology with coefficients in $R$ in bidegree $(p,q)$.
\end{Prop}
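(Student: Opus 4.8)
The plan is to reduce everything to the Gysin localization sequence of the codimension-one closed immersion $F \hookrightarrow Y_s$, exploiting one structural feature: the Galois action of $\mu_s(k)$ on $Y_s$ fixes $F = \{u = 0\}$ pointwise, because each $g \in \mu_s(k)$ acts by scaling the coordinate $u$. Hence $g^* = \mathrm{id}$ on $H^{\bullet,\bullet}(F,R)$ in every bidegree, and, by the functoriality of the Gysin triangle recalled in Section~\ref{Motivic cohomology}, each $g$ acts as an endomorphism of the long exact sequence
\[
\cdots \longrightarrow H^{p-2,q-1}(F,R) \overset{\partial}{\longrightarrow} H^{p,q}(Y_s,R) \overset{j^*}{\longrightarrow} H^{p,q}(Y_s \setminus F,R) \longrightarrow \cdots
\]
which is the identity on every $H^{\bullet,\bullet}(F,R)$-term; in particular $g^* \circ \partial = \partial$.

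For the first implication I would introduce the averaging operator $e := s^{-1}\sum_{g \in \mu_s(k)} g^*$ on $H^{p,q}(Y_s,R)$. Since $s \in R^\times$, this is an $R$-linear idempotent projecting onto $H^{p,q}(Y_s,R)^{\mu_s(k)}$, so the assertion is exactly that $1 - e = 0$. Two observations do the job. First, $1 - e$ annihilates $\operatorname{im}\partial$: for $z \in H^{p-2,q-1}(F,R)$ one has $g^*\partial z = \partial z$, hence $e\,\partial z = \partial z$. Second, $j^* e = \widetilde e\, j^*$ where $\widetilde e$ is the analogous idempotent on $H^{p,q}(Y_s \setminus F,R)$; the hypothesis that the Galois action is trivial there means $\widetilde e = \mathrm{id}$, so $j^*\big((1-e)x\big) = 0$ and therefore $(1-e)x \in \ker j^* = \operatorname{im}\partial$ for every $x$. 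Combining the two gives $(1-e)^2 x = (1-e)\big((1-e)x\big) = 0$, and since $1 - e$ is idempotent we conclude $1 - e = 0$. (If one prefers to work one $g$ at a time: the same bookkeeping shows $(g^* - \mathrm{id})^2 = 0$ on $H^{p,q}(Y_s,R)$; writing $m = \operatorname{ord}(g)$, which divides $s$, the identity $(g^*)^m = \mathrm{id}$ expands to $m(g^* - \mathrm{id}) = 0$, and $m$ is a unit in $R$ because it divides the unit $s$, so $g^* = \mathrm{id}$.)

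For the converse, assume in addition that $j^*\colon H^{p,q}(Y_s,R) \to H^{p,q}(Y_s \setminus F,R)$ is an isomorphism. As $g$ preserves $F$ it restricts to $Y_s \setminus F$, and $j \circ g = g \circ j$ gives $g^* \circ j^* = j^* \circ g^*$; if $g^* = \mathrm{id}$ on $H^{p,q}(Y_s,R)$, this reads $g^* \circ j^* = j^*$, and surjectivity of $j^*$ forces $g^* = \mathrm{id}$ on $H^{p,q}(Y_s \setminus F,R)$. The one input that carries real content is the compatibility $g^* \circ \partial = \partial \circ (g|_F)^* = \partial$ used throughout, i.e. the naturality of the Gysin sequence with respect to the square relating $g|_F = \mathrm{id}_F$ and $g$ (whose underlying diagram of topological spaces is cartesian since $g$ is an isomorphism); granting this, both directions are formal manipulations with idempotents over $R$, together with the elementary fact that a divisor of a unit is a unit.
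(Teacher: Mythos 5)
Your proof is correct, but for the forward implication it takes a genuinely different route from the paper. The paper deduces triviality of the action on $H^{p,q}(Y_s,R)$ from Theorem \ref{GeneralTheorem}: under the hypothesis, $\varphi_s^{\ast}\colon H^{p,q}(X,R)\to H^{p,q}(Y_s,R)$ is an isomorphism, and the relation $\varphi_s\circ g=\varphi_s$ then forces $g^{\ast}=\mathrm{id}$; so the paper's argument passes through the comparison with $X$ (Corollary \ref{FixedSubgroup}, the excess-intersection identification of the maps on the $F$-terms as $s\cdot\varphi_s^{\ast}$, and the four lemma). You instead work entirely on $Y_s$: you use that the Galois action fixes $F=\{u=0\}$ pointwise, the functoriality of the Gysin/localization sequence for the square relating $g$ and $g|_F=\mathrm{id}_F$ (which is legitimate, since the underlying square is cartesian and there is no excess, so the $F$-terms carry the identity), and then either the averaging idempotent $e=s^{-1}\sum_g g^{\ast}$ with $(1-e)$ killing $\operatorname{im}$ of the Gysin map and $j^{\ast}(1-e)=0$, giving $(1-e)=(1-e)^2=0$, or the per-element variant $(g^{\ast}-\mathrm{id})^2=0$ plus $(g^{\ast})^m=\mathrm{id}$ with $m\mid s$ a unit in $R$. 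Both of your closing steps are sound ($H^{p,q}(Y_s,R)$ is an $R$-module and a divisor of a unit is a unit), and your converse argument via $j^{\ast}g^{\ast}=(g|_{Y_s\setminus F})^{\ast}j^{\ast}$ and surjectivity of $j^{\ast}$ is exactly the paper's. What your route buys is self-containedness: it never invokes the map to $X$, the transfer $\mu$, or Theorem \ref{GeneralTheorem}, and it isolates the actual mechanism (the action is trivial on the closed stratum, so nontriviality can only live on the open part); what the paper's route buys is brevity, since Theorem \ref{GeneralTheorem} has already been established and the proposition then follows from one commutative square.
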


\begin{proof}
The first statement follows from Theorem \ref{GeneralTheorem} and the commutative diagram

\begin{center}
$\begin{xy}
\xymatrix{
H^{p,q}(X,R) \ar[d]_{{id}_{X}^{\ast}} \ar[r]^{\varphi_{s}^{\ast}}_{\cong} & H^{p,q}(Y_{s},R) \ar[d]^{g^{\ast}}_{\cong}\\
H^{p,q}(X,R) \ar[r]_{\varphi_{s}^{\ast}}^{\cong} & H^{p,q}(Y_s,R)
}
\end{xy}$
\end{center}
for all $g \in \mu_s (k)$. The second statement follows directly from the commutative diagram

\begin{center}
$\begin{xy}
\xymatrix{
H^{p,q}(Y_{s},R) \ar[d]_{i^{\ast}}^{\cong} \ar[r]^{g^{\ast}} & H^{p,q}(Y_{s},R) \ar[d]^{i^{\ast}}_{\cong}\\
H^{p,q}(Y_s \setminus F,R) \ar[r]_{g^{\ast}} & H^{p,q}(Y_s \setminus F,R)
}
\end{xy}$
\end{center}
for all $g \in \mu_s (k)$, where $i: Y_{s} \setminus F \rightarrow Y_{s}$ denotes the inclusion.
\end{proof}

\begin{Rem}
If only the variety $X$ is assumed to be smooth, the variety $Y_s \setminus F$ is smooth even without any assumptions on $F$; then the analogue of Theorem \ref{Chow} (and Theorem \ref{Chow-Tensor}) hold for $Y_s \setminus F$: Assume that
\begin{itemize}
\item[(a)] $s \in R^{\times}$
\item[(b)] there is a $\mathbb{G}_{m,k}$-action $\gamma$ on $X$, which makes $f$ a quasi-invariant of weight $d \in \mathbb{Z}$ with respect to $\gamma$ and $\langle d,s \rangle = \mathbb{Z}$
\end{itemize}
Then $\varphi_{s}$ induces an isomorphism $H^{2i,i} (X \setminus F_{0},R) \cong H^{2i,i} (Y_{s} \setminus F,R)$ for $i \geq 1$.
\end{Rem}

Let $s,t \geq 1$ be integers. With the notation of Definition \ref{CyclicCoverings} and under the General Assumptions \ref{Assumptions}, we call the subvariety $Y_{s,t}$ of $X \times_k \mathbb{A}^1_k \times_k \mathbb{A}^1_k$ defined by the equation $\{f + u^{s} + v^{t}=0\}$ a bicyclic covering of orders $s,t$ (here $u$ and $v$ are the variables of the two copies of $\mathbb{A}^1_k$). We conclude the section with some results on the Chow groups and on the vector bundles of such bicyclic coverings. At first, we prove a result on the Chow groups of such bicyclic coverings:

\begin{Thm}\label{Bicyclic}
With the notation of Definition \ref{CyclicCoverings} and under the General Assumptions \ref{Assumptions}, further assume that
\begin{itemize}
\item[(a)] $f \in O_X (X)$ is prime and $O_X (X)$ is a UFD
\item[(b)] the smooth variety $X$ has the Chow groups of $Spec(k)$
\item[(c)] the smooth variety $F_{0}=\{f=0\} \subset X$ has the Chow groups of $Spec(k)$
\item[(d)] there is a $\mathbb{G}_{m,k}$-action $\gamma$ on $X$, which makes $f$ a quasi-invariant of weight $d \in \mathbb{Z}$ with respect to $\gamma$
\item[(e)] $\langle d,s \rangle = \langle d,t \rangle = \langle s,t \rangle= \mathbb{Z}$
\end{itemize}
Then $CH^i (Y_{s,t})=0$ for $i \geq 1$, where $Y_{s,t}:=\{f + u^{s} + v^{t}=0\} \subset X \times_k \mathbb{A}^1_k \times_k \mathbb{A}^1_k$.
\end{Thm}

\begin{proof}
The variety $Y_{s,t}$ is a cyclic covering of the product of $X$ and $\mathbb{A}^1_{k}$ (with coordinate $u$) along $Y_{s}=\{-f-u^{s}=0\}$ of order $t$; note that $Y_s$ is a cyclic covering of $X$ branched to order $s$ as usual, but here we use the regular function $-f$ instead of $f$. Theorem \ref{Chow-Tensor} together with assumptions (d), (e) and Remark \ref{Extension-Bicyclic} implies that $CH^{i}(Y_{s,t})$, $i \geq 1$, are divisible by any integer $n$ with $\langle n,t\rangle=\mathbb{Z}$ (just take $R = \mathbb{Z}/n\mathbb{Z}$). Now realize that $Y_{s,t}$ is also a cyclic covering of the product of $X$ and $\mathbb{A}^1_{k}$ (with coordinate $v$) along $Y_{t}=\{-f-v^{t}=0\}$ of order $s$. Using Theorem \ref{Chow-Tensor} again yields that $CH^{i}(Y_{s,t})$, $i \geq 1$, are also divisible by any integer $n$ with $\langle n,s\rangle=\mathbb{Z}$ (again, just take $R = \mathbb{Z}/n\mathbb{Z}$). As $s$ and $t$ are coprime, the groups $CH^{i}(Y_{s,t})$, $i \geq 1$, are divisible by any prime number and hence are divisible groups. In conclusion, it suffices to show that there is an integer $n \neq 0$ such that $CH^{i}(Y_{s,t})$ is $n$-torsion.\\
For this purpose, note first that Remark \ref{Chow-Torsion} together with assumption (b) and Remark \ref{Extension-Bicyclic} implies that $CH^{i}(Y_{s,t} \setminus Y_{s})$ is a $t$-torsion group for $i \geq 1$. Furthermore, the localization sequence
\begin{center}
$CH^{0}(Y_{s}) \rightarrow CH^{1} (Y_{s,t}) \rightarrow CH^{1} (Y_{s,t}\setminus Y_{s}) \rightarrow 0$
\end{center}
implies that $CH^1 (Y_{s,t})$ is $mt$-torsion for some integer $m \neq 0$: The group $CH^0 (Y_{s}) \cong \mathbb{Z}$ cannot map injectively into $CH^{1}(Y_{s,t})$ because tensoring the localization sequence with $\mathbb{Q}$ would then yield $CH^{1}(Y_{s,t}) \otimes_{\mathbb{Z}} \mathbb{Q} \neq 0$, which is impossible because of Theorem \ref{Chow-Tensor} with $R= \mathbb{Q}$. Therefore one obtains a short exact sequence of the form
\begin{center}
$0 \rightarrow \mathbb{Z}/m\mathbb{Z} \rightarrow CH^{1} (Y_{s,t}) \rightarrow CH^{1} (Y_{s,t}\setminus Y_{s}) \rightarrow 0$
\end{center}
for some $m \neq 0$, which shows that $CH^1 (Y_{s,t})$ is indeed $mt$-torsion. This settles the case $i=1$.\\
Furthermore, for $i \geq 2$, Remark \ref{Chow-Torsion} together with assumptions (b) and (c) implies that $CH^{i-1}(Y_{s})$ is an $ms$-torsion group for some integer $m \neq 0$ (which might depend on $i$): Indeed, Remark \ref{Chow-Torsion} implies that $CH^{i-1}(Y_{s}\setminus F)$ is $s$-torsion and the exact localization sequence
\begin{center}
$CH^{i-2}(F) \rightarrow CH^{i-1} (Y_{s}) \rightarrow CH^{i-1} (Y_{s}\setminus F) \rightarrow 0$
\end{center}
shows that $CH^{i-1}(Y_{s})$ is also $s$-torsion if $i \geq 3$ because of assumption (c) and at least $ms$-torsion for some integer $m \neq 0$ if $i=2$; in the latter case $CH^0 (F) \cong \mathbb{Z}$ cannot map injectively into $CH^{i-1}(Y_{s})$ because tensoring the localization sequence with $\mathbb{Q}$ would then yield $CH^{1}(Y_{s}) \otimes_{\mathbb{Z}} \mathbb{Q} \neq 0$, which is impossible because of Theorem \ref{Chow-Tensor}. Then the short exact sequence of the form
\begin{center}
$0 \rightarrow \mathbb{Z}/m\mathbb{Z} \rightarrow CH^{i-1} (Y_{s}) \rightarrow CH^{i-1} (Y_{s}\setminus F) \rightarrow 0$
\end{center}
for some $m \neq 0$ shows that $CH^{i-1} (Y_{s})$ is indeed $mt$-torsion for $i=2$ as well.\\
Altogether, the previous paragraphs and the exact localization sequence
\begin{center}
$CH^{i-1}(Y_{s}) \rightarrow CH^{i} (Y_{s,t}) \rightarrow CH^{i} (Y_{s,t}\setminus Y_{s}) \rightarrow 0$
\end{center}
imply that $CH^{i}(Y_{s,t})$ is $n$-torsion for $n=mst$ whenever $i \geq 2$ (note again that $m \neq 0$ might depend on $i$). This concludes the proof.
\end{proof}

\begin{Rem}\label{Bicyclic-Contractibility}
If one further assumes that $k=\mathbb{C}$ and that $X$ is topologically contractible in Theorem \ref{Bicyclic}, then $Y_{s,t}$ is topologically contractible by \cite[Example 6.2]{Z}. 
\end{Rem}

\begin{Bsp}\label{Bicyclic-Example-Affine-Space}
Again, a simple example is $X = \mathbb{A}^n_{k} = Spec (k[x_{1},...,x_{n}])$ with $\mathbb{G}_{m,k}$-action given by $(\lambda, x_{1},...,x_{n}) \mapsto (x_{1} \lambda^{d_{1}},...,x_{n} \lambda^{d_{n}})$ for some $d_{j} \in \mathbb{Z}$. As explained before, any coordinate function $x_{j}$ is then a quasi-invariant of weight $d_{j}$. If we then fix some $j \in \{1,...,n\}$, the bicyclic covering $Y_{s,t}$ of $X$ of orders $s,t$ with respect to the coordinate function $x_{j}$ is isomorphic to an affine space of dimension $n+1$ over $k$. If $s, t$ and $d_{j}$ are pairwise coprime, then Theorem \ref{Bicyclic} recovers the fact that $CH^{i}(Y_{s,t}) = 0$ for $i \geq 1$.
\end{Bsp}

A non-trivial example satisfying the hypotheses of Theorem \ref{Bicyclic} is given in Section \ref{BicyclicFourfold}. Finally, we can give a sufficient criterion for the triviality of all vector bundles over a bicyclic covering as in Theorem \ref{Bicyclic} if the dimension of the bicyclic covering is $4$:

\begin{Thm}\label{Bicyclic-VB}
With the notation of Definition \ref{CyclicCoverings} and the General Assumptions \ref{Assumptions}, further assume that
\begin{itemize}
\item[(a)] $f \in O_X (X)$ is prime and $O_X (X)$ is a UFD
\item[(b)] the smooth variety $X$ has the Chow groups of $Spec(k)$
\item[(c)] the smooth variety $F_{0}=\{f=0\} \subset X$ has the Chow groups of $Spec(k)$
\item[(d)] there is a $\mathbb{G}_{m,k}$-action $\gamma$ on $X$, which makes $f$ a quasi-invariant of weight $d \in \mathbb{Z}$ with respect to $\gamma$
\item[(e)] $\langle d,s \rangle = \langle d,t \rangle = \langle s,t \rangle= \mathbb{Z}$
\item[(f)] $\dim (Y_{s,t})=4$
\end{itemize}
Then $Y_{s,t}:=\{f + u^{s} + v^{t}=0\} \subset X \times_k \mathbb{A}^1_k \times_k \mathbb{A}^1_k$ has only trivial vector bundles and $W_{SL}(Y_{s,t})=0$.
\end{Thm}

\begin{Rem}
Recall that the group $W_{SL} (R)$ of a commutative ring $R$ was defined in \cite[\S 3]{SV} and therefore we can define $W_{SL}(X) = W_{SL}(\mathcal{O}_{X}(X))$ for affine schemes. We also refer the reader to \cite[Section 2.B]{Sy} for a brief introduction to this group.
\end{Rem}

\begin{proof}
Since $\dim (Y_{s,t})=4$, the triviality of all stably trivial vector bundles of rank $2$ over $Y_{s,t}$ is equivalent to the triviality of $W_{SL}(Y_{s,t})$ (cf. \cite[Theorem 3.19]{Sy}). So it suffices to prove the first statement in the theorem. The variety $Y_{s,t}$ has trivial Chow groups by Theorem \ref{Bicyclic}. In particular, $Y_{s,t}$ has only stably trivial vector bundles. Since $\dim (Y_{s,t})=4$, it now suffices by Theorem \ref{Syed} to show that $H^{2}_{Nis}(Y_{s,t},\textbf{I}^3)$ is trivial.\\
Note that either $s$ or $t$ has to be odd, so $Y_{s,t}$ is a cyclic covering of $X \times_k \mathbb{A}^1_k$ of odd order in any case. As $H^{2i,i}(X, \mathbb{Z}/2\mathbb{Z}) \cong CH^{i}(X) \otimes_{\mathbb{Z}} \mathbb{Z}/2\mathbb{Z} = 0$ for $i \geq 1$ by assumption, Corollary \ref{FundamentalIdeal} therefore implies that $H_{Nis}^{2}(Y_{s,t}, \textbf{I}^3) \cong H_{Nis}^{2}(X \times_k \mathbb{A}^1_k, \textbf{I}^3)$. But $H_{Nis}^{2}(X \times_k \mathbb{A}^1_k, \textbf{I}^3) \cong H_{Nis}^{2}(X, \textbf{I}^3) = 0$, where the first isomorphism follows from the fact that $\textbf{I}^3$ is a strictly $\mathbb{A}^1$-invariant sheaf of abelian groups (cf. \cite[Section 2]{AF1}) and the triviality of $H_{Nis}^{2}(X, \textbf{I}^3)$ follows from \cite[Proposition 5.2]{AF1} as $X$ is a smooth affine threefold over $k$. This finishes the proof. 
\end{proof}

\begin{Bsp}
If we let $n=3$ in Example \ref{Bicyclic-Example-Affine-Space}, then Theorem \ref{Bicyclic-VB} recovers the fact that all vector bundles over $\mathbb{A}^4_k$ are trivial.
\end{Bsp}

A non-trivial example satisfying the hypotheses of Theorem \ref{Bicyclic-VB} is given in Section \ref{BicyclicFourfold}.

\section{Examples}\label{Examples}

Finally, we are able to discuss some examples of smooth affine varieties which are cyclic coverings of varieties considered in Example \ref{KREX} and of varieties considered in \cite{DPO}. As usual, we assume that the base field $k$ is algebraically closed of characteristic $0$.

\subsection{Cyclic coverings of Koras-Russell threefolds of the first kind}\label{EXKR1}

Consider the Koras-Russell threefold $Y_s$ defined by the equation

\begin{center}
$x + x^2 y^{\alpha_1} + z^{\alpha_2} + t^{\alpha_3}$
\end{center}

with $\alpha_{1}, \alpha_{2}, \alpha_{3} \geq 2$ pairwise coprime (cf. \cite[Proposition 6.2]{Z}). This is a cyclic covering of the Koras-Russell threefold $X$ of the first kind defined by the equation

\begin{center}
$x + x^2 y + z^{\alpha_2} + t^{\alpha_3}$
\end{center}

with $f = y \in \mathcal{O}_X (X)$ and $s=\alpha_1$ (see Example \ref{KREX}). There is a $\mathbb{G}_{m,k}$-action on $X$ given by $(\lambda,x,y,z,t) \mapsto (\lambda^{\alpha_{2}\alpha_{3}}x,\lambda^{-\alpha_{2}\alpha_{3}}y,\lambda^{\alpha_{3}}z,\lambda^{\alpha_{2}}t)$ which makes $f=y$ a quasi-invariant of weight $-\alpha_{2}\alpha_{3}$.\\
As $X$ is $\mathbb{A}^1$-contractible (cf. \cite{DF}), the structure morphism $X \rightarrow Spec(k)$ induces an isomorphism on motivic cohomology with coefficients in any commutative ring $R$; in particular, the Chow groups of $X$ are that of $Spec(k)$ and the coordinate ring $\mathcal{O}_{X}(X)$ is a UFD. The subscheme $F_0 = \{y=0\} \subset X$ is isomorphic to $\mathbb{A}^2_k$ and therefore $f=y$ is prime in $\mathcal{O}_{X}(X)$; moreover, it follows that $F_0 \rightarrow Spec(k)$ induces an isomorphism on motivic cohomology with coefficients in $R$. The analogous statement holds for Chow groups of $X$ and $F_{0}$ tensored with $R$; in particular, for $i \geq 1$, it follows that $CH^i (F_{0})\otimes_{\mathbb{Z}}R$ and $CH^i (X)\otimes_{\mathbb{Z}}R$ are trivial. If $s=\alpha_{1} \in R^{\times}$, Theorem \ref{Chow-Tensor} implies that $CH^i (Y_{s})\otimes_{\mathbb{Z}}R =0$ for $i \geq 1$.\\
Algebraic vector bundles over $Y_s$ are completely classified by their Chern classes; in fact, all vector bundles over $Y_{s}$ are trivial if and only if $CH^1 (Y_s)$, $CH^2 (Y_s)$ and $CH^3 (Y_s)$ are trivial (cf. \cite{AF1} and \cite{KM}). So it follows from Theorem \ref{Chow-Tensor} with $R=\mathbb{Q}$ that "rationally" all vector bundles over $Y_s$ are trivial, i.e., $CH^1 (Y_{s})\otimes_{\mathbb{Z}}\mathbb{Q}$, $CH^2 (Y_{s})\otimes_{\mathbb{Z}}\mathbb{Q}$ and $CH^3 (Y_{s})\otimes_{\mathbb{Z}}\mathbb{Q}$ are trivial; similarly, it follows from Corollary \ref{rank-hom}, that $K_{0} (Y_{s}) \otimes_{\mathbb{Z}}\mathbb{Q} \cong \mathbb{Q}$, where the isomorphism is induced by the rank homomorphism. Corollary \ref{Euler-2} implies that any rank $3$ bundle over $Y_s$ admits a free direct summand of rank $1$.\\
If $k = \mathbb{C}$, then $Y_{s}$ is topologically contractible (it is a Koras-Russell threefold by \cite[Proposition 6.2]{Z}); it is well-known that topologically contractible smooth affine complex varieties have a trivial Picard group and hence $CH^1 (Y_{s}) \cong CH^1 (Y_{s}) \otimes_{\mathbb{Z}}R = 0$ (cf. \cite[Theorem 1]{G}). If $\alpha_{1},\alpha_{2},\alpha_{3} >> 1$, then \cite[Proposition 6.2]{Z} shows that the logarithmic Kodaira dimension of $Y_s$ is $2$ and therefore the variety $Y_s$ has to be a Koras-Russell threefold of the third kind in this case as Koras-Russell threefolds of the first or second kind are known to have logarithmic Kodaira dimension $-\infty$ (see Example \ref{KREX}).

\subsection{Cyclic coverings of Koras-Russell threefolds of the second kind}

Consider variety $Y_s$ defined by the equation

\begin{center}
$x + {(x^b + z^{\alpha_2})}^{l} y^{\alpha_1} + t^{\alpha_3}$
\end{center}

with $\alpha_{1}, \alpha_{2}, \alpha_{3} \geq 2$ pairwise coprime, $b, l \geq 2$ with $\langle b, \alpha_{2} \rangle = \mathbb{Z}$ and $\langle bl-1, \alpha_{1} \rangle = \mathbb{Z}$. The variety $Y_s$ is a cyclic covering of the Koras-Russell threefold $X$ of the second kind defined by the equation

\begin{center}
$x + {(x^b + z^{\alpha_2})}^{l} y + t^{\alpha_3}$
\end{center}

with $f = y \in \mathcal{O}_X (X)$ and $s=\alpha_1$ (see Example \ref{KREX}). There is a $\mathbb{G}_{m,k}$-action on $X$ given by $(\lambda,x,y,z,t) \mapsto (\lambda^{\alpha_{2}\alpha_{3}}x,\lambda^{(1-bl)\alpha_{2}\alpha_{3}}y,\lambda^{b\alpha_{3}}z,\lambda^{\alpha_{2}}t)$ which makes $f=y$ a quasi-invariant of weight $(1-bl)\alpha_{2}\alpha_{3}$. As pointed out in Example \ref{KREX}, $X$ is stably $\mathbb{A}^1$-contractible; in particular, the structure morphism $X \rightarrow Spec(k)$ induces an isomorphism on Chow groups and therefore the coordinate ring $\mathcal{O}_{X}(X)$ is a UFD. The subscheme $F_0 = \{y=0\} \subset X$ is isomorphic to $\mathbb{A}^2_k$ and hence $f=y$ is prime in $\mathcal{O}_{X}(X)$ and $F_0 \rightarrow Spec(k)$ induces an isomorphism on Chow groups; in particular, for $i \geq 1$, it follows that $CH^i (F_{0})\otimes_{\mathbb{Z}}R$ and $CH^i (X)\otimes_{\mathbb{Z}}R$ are trivial for any coefficient ring $R$. If $s=\alpha_{1} \in R^{\times}$, then Theorem \ref{Chow-Tensor} implies that in fact $CH^i (Y_{s})\otimes_{\mathbb{Z}}R =0$ for $i \geq 1$.\\
As in the previous example, it follows again from Theorem \ref{Chow-Tensor} that "rationally" all vector bundles over $Y_s$ are trivial as $CH^1 (Y_{s})\otimes_{\mathbb{Z}}\mathbb{Q}$, $CH^2 (Y_{s})\otimes_{\mathbb{Z}}\mathbb{Q}$ and $CH^3 (Y_{s})\otimes_{\mathbb{Z}}\mathbb{Q}$ are trivial groups; moreover, Corollary \ref{rank-hom} implies that $K_{0} (Y_{s}) \otimes_{\mathbb{Z}}\mathbb{Q} \cong \mathbb{Q}$ and Corollary \ref{Euler-2} implies that any rank $3$ bundle over $Y_s$ admits a free direct summand of rank $1$.\\
If $k = \mathbb{C}$, it follows directly from the construction in \cite[Theorem 4.1]{KR} that the variety $Y_s$ is a Koras-Russell threefold; indeed, choose $-a'_1 = dl-1$, $a'_2 = d$, $a'_3 = 1$, $\bar{u}_2 = \xi$, $\bar{u}_3 = \zeta + {(\zeta^d + \xi)}^{l}$ in \cite[Theorem 4.1]{KR} in order to obtain the type of equation defining $Y_s$. In particular, $Y_{s}$ is topologically contractible and has a trivial Picard group, i.e., $CH^1 (Y_{s}) \cong CH^1 (Y_{s}) \otimes_{\mathbb{Z}}R = 0$. If $\alpha_{2}, \alpha_{3} >> 1$ and $\alpha_{1} \geq (dl-1)\alpha_{2} \alpha_{3}$, this Koras-Russell threefold has logarithmic Kodaira dimension $2$ by \cite[Proposition 6.5]{KR} and hence must be a Koras-Russell threefold of the third kind.

\subsection{Cyclic coverings of stably $\mathbb{A}^1$-contractible smooth affine fourfolds}

Consider the variety $Y_s$ defined by the equation

\begin{center}
$x + x^n z^{\alpha_{0}} + y_1^{\alpha_1} + y_2^{\alpha_2} + y_3^{\alpha_3}$
\end{center}

with $\alpha_{0}, \alpha_{1}, \alpha_{2}, \alpha_{3} \geq 2$ pairwise coprime, $n \geq 2$ and $\langle 1-n, \alpha_{0} \rangle = \mathbb{Z}$; furthermore, assume $\alpha_{3}=1+m\alpha_{1}\alpha_{2}$ for some $m>0$. Then the variety $Y_s$ is a cyclic covering of the smooth affine variety $X$ considered in \cite[Theorem 1.19]{DPO} defined by the equation

\begin{center}
$x + x^n z + y_1^{\alpha_1} + y_2^{\alpha_2} + y_3^{\alpha_3}$
\end{center}

with $f = z \in \mathcal{O}_X (X)$ and $s=\alpha_{0}$. There is a $\mathbb{G}_{m,k}$-action on $X$ given by $(\lambda,x,y_{1},y_{2},y_{3},z) \mapsto (\lambda^{\alpha_{1}\alpha_{2}\alpha_{3}}x,\lambda^{\alpha_{2}\alpha_{3}}y_{1},\lambda^{\alpha_{1}\alpha_{3}}y_{2},\lambda^{\alpha_{1}\alpha_{2}}y_{3},\lambda^{(1-n)\alpha_{1}\alpha_{2}\alpha_{3}}z)$ which makes $f=z$ a quasi-invariant of weight $(1-n)\alpha_{1}\alpha_{2}\alpha_{3}$. As proven in \cite[Theorem 1.19]{DPO}, $X$ is stably $\mathbb{A}^1$-contractible; hence the structure morphism $X \rightarrow Spec(k)$ induces an isomorphism on Chow groups and the coordinate ring $\mathcal{O}_{X}(X)$ is a UFD. The subscheme $F_0 = \{z=0\} \subset X$ is isomorphic to $\mathbb{A}^3_k$ and hence $f=z$ is prime in $\mathcal{O}_{X}(X)$ and $F_0 \rightarrow Spec(k)$ induces an isomorphism on Chow groups. For $i \geq 1$, it follows that $CH^i (F_{0})\otimes_{\mathbb{Z}}R$ and $CH^i (X)\otimes_{\mathbb{Z}}R$ are trivial for any coefficient ring $R$. If $s=\alpha_{1} \in R^{\times}$, Theorem \ref{Chow-Tensor} implies that $CH^i (Y_{s})\otimes_{\mathbb{Z}}R =0$ for $i \geq 1$.\\
It is well-known that algebraic vector bundles over $Y_{s}$ are stably trivial if $CH^1 (Y_s)$, $CH^2 (Y_s)$, $CH^3 (Y_s)$ and $CH^4 (Y_s)$ are trivial because the rank homomorphism induces an isomorphism $K_0 (Y_{s}) \xrightarrow{\cong} \mathbb{Z}$ in this case. It follows from Corollary \ref{rank-hom} that "rationally" all vector bundles over $Y_s$ are indeed stably trivial groups as the rank homomorphism induces an isomorphism $K_{0} (Y_{s}) \otimes_{\mathbb{Z}}\mathbb{Q} \cong \mathbb{Q}$. Corollary \ref{Euler-2} implies that any rank $4$ bundle over $Y_s$ admits a free direct summand of rank $1$.\\
If $k = \mathbb{C}$, then $Y_{s}$ is topologically contractible by \cite[Example 6.2]{Z}: The polynomial $y_{1}^{\alpha_{1}} + x + x^n z^{\alpha_{0}}$ is prime in $\mathbb{C}[x,z,y_{1}]=\mathbb{C}[x,z][y_{1}]$ by Eisenstein's criterion for irreducibility and defines a smooth subvariety of $\mathbb{A}^3_{\mathbb{C}}$; moreover, it is a quasi-invariant of weight $\alpha_{0}\alpha_{1}$ with respect to the $\mathbb{G}_{m,k}$-action $(\lambda,x,y_{1},z) \mapsto (\lambda^{\alpha_{0}\alpha_{1}}x,\lambda^{\alpha_{0}} y_{1},\lambda^{(1-n)\alpha_{1}}z)$. So by \cite[Example 6.2]{Z} the variety $Y_s$ is topologically contractible. Altogether, it follows that $Y_s$ has a trivial Picard group and, in particular, $CH^1 (Y_{s}) \cong CH^1 (Y_{s}) \otimes_{\mathbb{Z}}R = 0$.

\subsection{Bicyclic coverings of Koras-Russell threefolds of the first kind}\label{BicyclicFourfold}

Consider the variety $Y=Y_{\alpha,\beta}$ defined by the equation

\begin{center}
$x + x^2 (u^{\alpha}+v^{\beta}) + z^{\alpha_2} + t^{\alpha_3}$
\end{center}

with $\alpha, \alpha_{2}, \alpha_{3}, \beta \geq 2$ pairwise coprime. This is a bicyclic covering of the Koras-Russell threefold $X$ of the first kind (see Example \ref{KREX}) defined by the equation

\begin{center}
$x + x^2 y + z^{\alpha_2} + t^{\alpha_3}$
\end{center}

with $f = -y \in \mathcal{O}_X (X)$ of orders $\alpha,\beta$. The discussion of Section \ref{EXKR1} and Theorem \ref{Bicyclic} imply that $CH^{i}(Y_{\alpha,\beta})=0$ for $i \geq 1$. It follows that all vector bundles over $Y$ are stably trivial; in fact, it follows that at least all vector bundles of rank $\neq 2$ are trivial by well-known cancellation theorems (cf. \cite{FRS}, \cite{S2}). All vector bundles of rank $2$ are trivial if and only if the Hermitian $K$-theory group $W_{SL}(Y_{\alpha,\beta})$ is trivial; in fact, the vanishing of $H_{Nis}^{2}(Y_{\alpha,\beta},\textbf{I}^3)$ already guarantees the triviality of all vector bundles over $Y_{\alpha,\beta}$ (cf. \cite{Sy}). Finally, Theorem \ref{Bicyclic-VB} implies that all vector bundles over $Y_{\alpha,\beta}$ are indeed trivial. If $k=\mathbb{C}$, then $Y_{s,t}$ is topologically contractible by Remark \ref{Bicyclic-Contractibility}.

\end{document}